\documentclass[11pt]{article}
\usepackage[left=3.2cm,right=3.2cm,top=3.5cm,bottom=3cm]{geometry}
\usepackage[utf8]{inputenc}
\usepackage[english]{babel}
\usepackage{amsmath}
\usepackage{amssymb}
\usepackage{amsthm}
\usepackage{newpxtext,newpxmath}
\usepackage{braket}
\usepackage{amscd}
\usepackage{setspace}
\usepackage{booktabs}
\usepackage{lscape}
\usepackage{rotating}
\usepackage{graphicx}
\usepackage{tikz}
\usepackage{xcolor}
\usepackage{mathtools}
\usepackage{sidecap}
\usepackage{emptypage}
\usepackage{enumitem}
\usepackage{todonotes}

\usepackage[colorlinks=true, allcolors=blue, pagebackref=true]{hyperref}

\newtheorem{theorem}{Theorem}
\newtheorem*{theorem*}{Theorem}

\newtheorem{lemma}{Lemma}
\newtheorem{proposition}{Proposition}
\newtheorem{corollary}{Corollary}
\newtheorem*{corollary*}{Corollary}

\newtheorem*{result*}{Result}
\theoremstyle{remark}
\newtheorem{remark}{Remark}
\theoremstyle{definition}

\newtheorem*{example*}{Example}

\newtheorem*{Ack}{Acknowledments}

\newcommand{\Z}{\mathbb{Z}}
\newcommand{\C}{\mathbb{C}}
\newcommand{\Q}{\mathbb{Q}}
\newcommand{\R}{\mathbb{R}}
\newcommand{\F}{\mathbb{F}}
\newcommand{\pro}{\mathbb{P}}
\newcommand{\Po}{\mathbb{H}}
\newcommand{\OO}{\mathcal{O}}
\newcommand{\fa}{\mathfrak{a}}

\newcommand{\fp}{\mathfrak{p}}

\newcommand{\Spec}{\mathrm{Spec}}

\newcommand{\SL}{\mathrm{SL}}

\newcommand{\spec}{\mathrm{Spec}}

\def\={\;=\;}
\def\:={\;:=\;}
\def\+{\;+\;}
\def\-{\;-\;}

\title{Partial Hasse invariants for genus zero curves in Hilbert modular varieties}
\author{Gabriele Bogo \and Yingkun Li}
\date{}
\begin{document}
\maketitle 
\begin{abstract}
We construct characteristic-zero lifts of partial Hasse invariants for genus zero non-compact curves in Hilbert modular varieties.
The construction is based on recent results on the associated Picard-Fuchs differential equations. 
As an application, we relate the size of the non-ordinary locus of the modulo~$p$ reduction of these curves to the dimension of spaces of (twisted) modular forms. We compute it explicitly for several Teichmüller curves, obtaining Deuring-like formulae.  
Moreover, we study the modulo~$p$ reduction of (twisted) modular forms on not necessarily arithmetic genus-zero Fuchsian groups with modular embedding. 
\end{abstract}
The systematic study of supersingular elliptic curves was initiated by M. Deuring~\cite{Deu} in~1931, who made explicit the relation between maximal orders in certain quaternion algebras and endomorphism ring of elliptic curves (over~$\overline{\mathbb{F}}_p)$ with trivial~$p$-torsion. This, together with Eichler's mass formula, gives the following formula for the number of supersingular~$j$-invariants over~$\overline{\mathbb{F}}_p$
\begin{equation}
\label{eq:Deuring}
\sharp\{\text{supersingular }j\text{-invariants over }\overline{\mathbb{F}}_p\}\=\Bigl\lfloor{\frac{p}{12}}\Bigr\rfloor
\+\delta\+\epsilon\,,
\end{equation}
where~$\delta=1$ if~$p\equiv 2 \mod 3$ and~$\delta=0$ otherwise, and~$\epsilon=1$ if~$p\equiv 3\mod 4$ and~$\epsilon=0$ otherwise. 

The supersingular locus of the moduli space of elliptic curves is strictly related to the theory of modular forms in characteristic~$p>0$. At an elementary level, this can be roughly described as follows. Let~$k$ be an extension of~$\F_p$ and consider the assignment 
\[
H_p: (E\,,\,y^2=f(x))\mapsto a_{p-1}\,,\quad \text{where }f(x)^{\frac{p-1}{2}}=\sum_{n\ge0}{a_nx^n}\,,
\]
that to an elliptic curve~$E/k$ with Weierstrass model~$y^2=f(x)$ associates the coefficient of~$x^{p-1}$ in the expansion of~$f(x)^{(p-1)/2}$. This rule defines a modular form~$H_p$ over~$k$, called~\emph{Hasse invariant}, which is of fundamental importance in the theory of modular forms in positive characteristic. It has the following properties: the divisor of~$H_p$ is supported on the supersingular locus, i.e. $a_{p-1}=0$ if and only if~$y^2=f(x)$ is a supersingular elliptic curve; moreover, the~$q$-expansion of~$H_p$ at the cusp is constant and equal to~$1$. 
The modular form~$H_p$ can be lifted to modular forms in characteristic zero. A choice of lift of~$H_p$ is the Eisenstein series~$E_{p-1}$ of weight~$p-1$. In particular, the classical description of~$E_{k}=1-\frac{2k}{B_k}\sum_{n\ge1}\sigma_{k-1}(n)q^n$ and the properties of Bernoulli numbers immediately imply that~$E_{p-1}\equiv 1\mod p$. 

In the Hilbert setting, the Hasse invariant is naturally replaced by a collection of modular forms called~\emph{partial Hasse invariants}, first introduced by Goren~\cite{G2} and studied in more detail in the memoir~\cite{AG} by Andreatta and Goren. 
Let~$F$ be a totally real field of degree~$g$ over~$\Q$ and let~$\mathcal{M}_{F,s}$ be the associated Hilbert modular variety over~$s=\mathrm{Spec}(\overline{\F}_p)$. When~$p$ is unramified in~$F$, there exist~$g$ Hilbert modular forms~$H_{p,1},\dots,H_{p,g}$ over~$\overline{\F}_p$ (actually over a finite field), of non-parallel weight~$p-1$, with the following properties: first, the divisor of~$H_{p,j}$ is a component of the non-ordinary locus in~$\mathcal{M}_{F,s}$, and the non-ordinary locus is precisely the union of such components; second, the~$q$-expansion of~$H_{p,j}$ is constant at every cusp. See Section~\ref{sec:charp} for details.
Contrary to the elliptic case though, the partial Hasse invariant do not lift to Hilbert modular forms in characteristic zero: they are purely characteristic~$p$ objects.

The main aim of this paper is to show that, when restricted to certain curves in Hilbert modular varieties, (powers of) partial Hasse invariants lift to modular forms in characteristic zero. 
Let~$Y\hookrightarrow M_F=\Po^g/\mathrm{SL}_2(\mathcal{O}_F)$ be an affine algebraic smooth curve embedded in a Hilbert modular variety (these curves are examples of~\emph{affine Kobayashi geodesics}). It follows from a result of Möller and Viehweg~\cite{MV} that~$Y$ can be defined over a number field $K$, and therefore can be equipped with an integral model over~$\mathcal{O}_K[S^{-1}]$ by taking the Zariski closure in the integral structure of the Hilbert modular variety (see Section~\ref{sec:phpol}). Here~$\mathcal{O}_K$ is the ring of integers in~$K$ and~$S$ is a finite set of primes. 
By modular form above we mean~\emph{twisted modular form}, as introduced by Möller and Zagier in~\cite{MZ}. If~$Y\simeq\Po/\Gamma\hookrightarrow M_F$ is a curve, a twisted modular form on~$\Gamma$ is a holomorphic function~$f\colon\Po\to\C$ with multi-weight~$\vec{k}=(k_1,\dots,k_g)$ whose transformation formula depends on the embedding of~$Y$ in~${M}_F$. See Section~\ref{sec:intro1} for the precise definition. We denote by~$M_{\vec{k}}(\Gamma,\varphi)$ the space of twisted modular forms of multi-weight~$\vec{k}$.

In this paper, we construct liftings of powers of partial Hasse invariants for every genus zero curve in a Hilbert modular variety of dimension~$g\ge2$ with no restriction on the base field~$K$, and for almost every prime unramified in the field of real multiplication and in~$K$ (Theorem~\ref{thm:phi}). Under the assumption~\eqref{eq:ass}, which concerns elliptic points and is satisfied in infinitely many cases (in particular by the torsion-free curves), we construct liftings of partial Hasse invariants without the need of taking powers (Theorem~\ref{thm:phi2}). 

For ease of exposition, we state the results in the introduction in the case of~$Y$ torsion-free, $K=\Q$, $g=2$, and~$p$ inert in~$F$.
Let~$\mathcal{X}\to\mathcal{Y}$ be the family of abelian surfaces giving an integral model to~$Y\hookrightarrow M_F$. For a prime~$p\not\in S$, inert in~$F$, denote by~$\mathcal{X}_p\to\mathcal{Y}_p$ the modulo~$p$ reduction of this model and by $\mathcal{X}_{\bar{p}}=\mathcal{X}_p\otimes_{\F_p}\overline{\F}_{p}$ its base change to~$\overline{\F}_{p}$.

As mentioned above, the non-ordinary locus of the Hilbert modular surface~$\mathcal{M}_{F,s}$ has two components~$D_1$ and~$D_2$. The intersection~$\mathcal{X}_{\bar{p}}\cap(D_1\cup D_2)$ is finite for generic~$p$, and in the genus-zero case its points can be identified with the zeros of polynomials~$\mathrm{ph}_{p,j}(t)\in {\F}_p[t]$ for~$j=1,2$ defined in Equation \eqref{eq:phpdef} (see Section~\ref{sec:intro1}).

\begin{theorem*}[Theorem~\ref{thm:phi}, Part 1]
Let~$\Po/\Gamma\simeq Y\hookrightarrow M_F$ and~$p$ be as above. 
There exists twisted modular forms~$h_{p,1}$ and~$h_{p,2}$ on~$\Gamma$, of weight~$(-1,p)$ and~$(p,-1)$ respectively, and a Hauptmodul~$t$ such that the polynomial in~$t$ describing the zeros of~$h_{p,j}$ in a fundamental domain of~$\Gamma$ has~$p$-integral coefficients and reduces to~$\mathrm{ph}_{p,j}(t)$ modulo~$p$ for~$j=1,2$.
\end{theorem*}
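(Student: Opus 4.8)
The plan is to reduce the assertion to a computation with the Gauss--Manin connection of the family $\mathcal{X}\to\mathcal{Y}$ along $Y$, and then to recognize each partial Hasse invariant as the modulo~$p$ reduction of a characteristic-zero twisted modular form built canonically from the associated Picard--Fuchs operator. First I would fix, for $j=1,2$, a nowhere-vanishing generator $\omega_j$ of the $j$-th Hodge subline of the de Rham bundle of $\mathcal{X}$ along $Y$; concretely $\omega_1$ is a twisted modular form of weight $(1,0)$ and $\omega_2$ one of weight $(0,1)$, both arising from the period map, together with a Hauptmodul $t$ on $Y\simeq\Po/\Gamma$. By the recent results on the Picard--Fuchs equations recalled above, the restriction to $Y$ of the Gauss--Manin connection splits, in the $j$-component, into a second-order Fuchsian operator $L_j=\partial_t^2+a_j(t)\,\partial_t+b_j(t)$ with coefficients rational in $t$ and $p$-integral for $p\notin S$, and for which $\omega_j$, viewed as a function of $t$, is a solution. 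This is the structural input that replaces the hypergeometric Picard--Fuchs equation of the Legendre family in the elliptic case.

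Next I would express $H_{p,j}|_{Y}$ through the Cartier--Manin formula. Over $\mathcal{Y}_{\bar p}$ the partial Hasse invariant $H_{p,j}$, of weight $(-1,p)$ for $j=1$ and $(p,-1)$ for $j=2$, computes the relevant entry of the Hasse--Witt matrix, i.e. the action of the inverse Cartier operator on the $j$-th piece of $H^1(\mathcal{X}_{\bar p},\OO)$. Trivializing by $\omega_j$ and using Katz's identification of the Cartier operator with the reduction of the Gauss--Manin connection, one obtains that $H_{p,j}|_{Y}$, written in the coordinate $t$, equals the reduction modulo~$p$ of the coefficient extracted by iterating $\nabla_{\partial_t}$ exactly $p-1$ times starting from $\omega_j$. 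Since $L_j$ is second order with $p$-integral rational coefficients and $Y$ has genus zero, this $(p-1)$-fold iterate is a $p$-integral polynomial in $t$, and by the definition of $\mathrm{ph}_{p,j}$ in~\eqref{eq:phpdef} (its zeros being the points of $\mathcal{X}_{\bar p}\cap D_j$) this polynomial is precisely $\mathrm{ph}_{p,j}(t)$.

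The remaining and decisive step is to realize this mod-$p$ polynomial as the reduction of a genuine characteristic-zero twisted modular form of the prescribed weight. Here I would use the operator $L_j$ directly: the weight-raising derivation attached to $L_j$ sends periods to twisted modular forms, and combining $\omega_1,\omega_2$, their $L_j$-derivatives, and the twisted derivative $\varphi'$ (which has weight $(-2,2)$, as one checks from $\varphi(\gamma z)=\gamma^{\sigma}\varphi(z)$) one writes down an explicit holomorphic form $h_{p,1}$ of weight $(-1,p)$ and $h_{p,2}$ of weight $(p,-1)$. Because $Y$ has genus zero, the divisor of zeros of $h_{p,j}$ in a fundamental domain of $\Gamma$ is cut out by a polynomial $P_j(t)$, and the $p$-integrality of $L_j$ for $p\notin S$ forces $P_j$ to have $p$-integral coefficients. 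Reducing the whole construction modulo~$p$ collapses the characteristic-zero weight-raising operator to the Cartier--Manin iterate of the previous paragraph, so $P_j(t)\equiv\mathrm{ph}_{p,j}(t)\pmod p$, which is the claim.

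The main obstacle is exactly this last matching. In the elliptic case the lift is handed to us by the Eisenstein series $E_{p-1}$, whose reduction is the Hasse invariant because $E_{p-1}\equiv 1\pmod p$; here no such Eisenstein lift exists, so one must produce $h_{p,j}$ by hand from the Picard--Fuchs data and prove both that the normalized weight-raising iterate is $p$-integral and that it reduces to the Cartier--Manin polynomial without spurious denominators or lower-order correction terms. Controlling these denominators---using that $p\notin S$ is unramified in $F$ and in $K$, and that the genus-zero hypothesis makes the relevant iterate polynomial in $t$ rather than merely meromorphic---is where the real work lies.
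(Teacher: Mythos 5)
Your proposal circles the right ingredients (the Picard--Fuchs operators of the family, the Cartier--Manin/Hasse--Witt description of the non-ordinary locus, $p$-integrality for $p\notin S$), but as written it has two concrete problems. First, the starting point does not exist: a twisted modular form of weight $(0,1)$ cannot be nowhere vanishing, because the weight--degree relation forces its divisor to have degree $-\tfrac{\chi(\Po/\Gamma)}{2}\lambda_2>0$. This is exactly why the paper does not trivialize the Hodge sublines by forms of weight $(1,0)$ and $(0,1)$, but instead constructs (Proposition~\ref{prop:Bj}) zero-free twisted modular forms $B_j$ of weight $(-\lambda_j,0,\dots,1,\dots,0)$: the extra $-\lambda_j$ in the first slot is what makes the divisor degree zero, and these Lyapunov exponents then propagate into the weight bookkeeping via the degree formula~\eqref{eq:degapj}. (Relatedly, $\varphi_2'$ has weight $(2,-2)$, not $(-2,2)$.) Second, and more seriously, the decisive step --- producing a characteristic-zero form of the prescribed weight whose zero polynomial is $p$-integral and reduces to $\mathrm{ph}_{p,j}(t)$ --- is precisely the step you defer to ``where the real work lies.'' Iterating $\nabla_{\partial_t}$ $(p-1)$ times in characteristic zero does not by itself produce a $p$-integral object reducing to the Cartier--Manin entry (even in the elliptic case, $E_{p-1}\equiv H_p \bmod p$ is not proved by such an iteration), and no argument is given that the ``normalized weight-raising iterate'' is holomorphic of weight exactly $(-1,p)$ resp.\ $(p,-1)$.

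The paper's resolution is more elementary at this juncture: it takes the polynomial $\alpha_{p,j}(t)$ over the residue field --- supplied, together with its degree formula and the congruence~\eqref{eq:cong}, by the cited results of~\cite{B} on mod-$p$ solutions of the Picard--Fuchs equations --- lifts it \emph{arbitrarily} to $\widetilde{\alpha}_{p,j}\in\mathcal{O}_K[S^{-1}][t]$, and multiplies by the explicit zero-free forms $(\Delta/t)^{d_{p,j}}B_j^{-N}B_{j'}^{pN}$ to land in the correct weight. Part 1 of the statement is then true essentially by construction, and the only nontrivial verification is that the first weight component vanishes, which is the identity $d_{p,j}=-\tfrac{\chi(\Po/\Gamma)}{2}N(p\lambda_{j'}-\lambda_j)$ from~\eqref{eq:degapj}. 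To salvage your approach you would need either to import the congruence~\eqref{eq:cong} as the paper does, or to genuinely prove the mod-$p$ matching of your weight-raising construction; in either case the trivializing sections must be replaced by the degree-zero forms $B_j$.
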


This theorem states that the twisted modular forms~$h_{p,j}$ satisfy one of the properties of the partial Hasse invariants, relative to the non-ordinary locus of~$\mathcal{X}_{\bar{p}}$. The other property, related to the~$q$-expansion of the partial Hasse invariants, is more subtle. 
The problem is that the uniformizing group of a Kobayashi curve~$Y$ is not an arithmetic Fuchsian group in general. The associated (twisted) modular form may not have algebraic Fourier coefficients at the cusps. In some cases it has been shown  (see~\cite{Wo} for the case of triangle curves, or~\cite{MV} for the case of Teichmuller curves in Hilbert modular surfaces) that, if suitably normalized, the Fourier coefficients can be made algebraic, but in general not~$p$-integral for a given prime~$p$. 
Nevertheless, it has been shown~\cite{B} that the local expansion at the cusp of a twisted modular form in terms of a Hauptmodul~$t$ is better behaved. More precisely, let $t$ be a Hauptmodul normalized to be zero at a cusp $c$ of~$\Gamma$ and whose values at the other cusps is in $\mathcal{O}_{K}[S^{-1}]^{\times}$. Then for every weight~$(k_1,k_2)\in\Z^2$ there exists a basis of twisted modular forms~$M_{(k_1,k_2)}(\Gamma,\varphi)$ whose~$t$-expansions at $c$ have coefficients in~$\mathcal{O}_K[S^{-1}]$. We can then prove that the~$t$-expansions of the modular forms~$h_{p,j}$ have the desired property modulo~$p$. 
\begin{theorem*}[Theorem~\ref{thm:phi}, Part 2]
  Let~$h_{p,1}$ and~$h_{p,2}$ be the twisted modular forms in the previous theorem, and~$t$ an Hauptmodul as above. Then the~$t$-expansion of~$h_{p,j}$ is constant modulo~$p$.
\end{theorem*}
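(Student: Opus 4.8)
The plan is to transfer the constancy of the $q$-expansion of the partial Hasse invariant $H_{p,j}$ to the $t$-expansion of its lift $h_{p,j}$, carrying out the comparison \emph{after} reduction modulo $p$ so as to avoid the transcendence of the Fourier coefficients that obstructs the naive argument in characteristic zero.

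First I would make the reduction legitimate and identify the reduced object. By \cite{B} the form $h_{p,j}\in M_{\vec{k}}(\Gamma,\varphi)$, with $\vec{k}=(-1,p)$ or $(p,-1)$, may be taken to have $t$-expansion at $c$ with coefficients in $\mathcal{O}_K[S^{-1}]$, so its reduction $\bar h_{p,j}$ modulo $p$ is a well-defined twisted modular form over $\overline{\F}_p$ and the quantity whose constancy we must establish is exactly the $t$-expansion of $\bar h_{p,j}$. On the smooth compactification $\mathbb{P}^1$ of $\mathcal{Y}_{\bar p}$, both $\bar h_{p,j}$ and the restriction $H_{p,j}|_{\mathcal{Y}_{\bar p}}$ are sections of the same line bundle $\omega^{\vec{k}}$; by Part~1 the former vanishes exactly along $\{\mathrm{ph}_{p,j}=0\}$, while the latter vanishes along $\mathcal{Y}_{\bar p}\cap D_j$, which is the same divisor, and since the cusps are ordinary neither section vanishes at $c$. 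As two sections of a line bundle on $\mathbb{P}^1$ with equal divisor differ by a scalar, this yields $\bar h_{p,j}=\kappa\,H_{p,j}|_{\mathcal{Y}_{\bar p}}$ for some $\kappa\in\overline{\F}_p^{\times}$, so it suffices to prove that $H_{p,j}|_{\mathcal{Y}_{\bar p}}$ has constant $t$-expansion at $c$.

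Here I would use that over $\overline{\F}_p$ the cusp $c$, the Hauptmodul $t$, the canonical parameter $q$ at $c$, and the relevant trivializations of $\omega^{\vec{k}}$ are all defined over a finite field, so no transcendence intervenes. The partial Hasse invariant has constant $q$-expansion at every cusp of $\mathcal{M}_{F,s}$; pulling back along the modular embedding $\varphi$, the expansion of $H_{p,j}|_{\mathcal{Y}_{\bar p}}$ at $c$ in the canonical trivialization $s_{\mathrm{Tate}}$ attached to the degenerating (Tate) abelian variety is a nonzero constant. Writing $s_t$ for the trivialization underlying the $t$-expansion of \cite{B}, we have $s_t=\lambda\,s_{\mathrm{Tate}}$ with $\lambda\in\overline{\F}_p[[t]]^{\times}$, and the $t$-expansion of $H_{p,j}|_{\mathcal{Y}_{\bar p}}$ equals this constant $q$-expansion divided by $\lambda$.

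The main obstacle is thus to prove that $\lambda$ is itself constant, i.e.\ that the Tate trivialization agrees up to a scalar with the one defining the $t$-expansion. I expect this to follow from the compatibility of the integral model $\mathcal{X}\to\mathcal{Y}$ with the semi-abelian degeneration at $c$: the canonical differential of the Tate object trivializes $\omega$ over $\mathcal{O}_K[S^{-1}][[t]]$, and the normalization of $t$ (vanishing at $c$ and taking $S$-unit values at the remaining cusps) should pin the comparison section down to a unit constant modulo $p$. Once $\lambda$ is constant, the constancy of the $q$-expansion passes verbatim to the $t$-expansion, proving the statement for $j=1$ and $j=2$ simultaneously.
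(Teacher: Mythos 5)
Your strategy is genuinely different from the paper's. The paper never compares $h_{p,j}$ with the actual partial Hasse invariant as sections of a line bundle on $\pro^1$; instead it works entirely on the level of $t$-expansions, using the congruence~\eqref{eq:cong} from~\cite{B} between solutions of the Picard--Fuchs equations: it identifies $F_jB_j^{-l_j}$ with an explicit power of $\Delta/t$, rewrites $\alpha_{p,j}(t)^{l_jl_{j'}}F_{j'}(t)^{Np}F_j(t)^{-N}$ as $h_{p,j}^N(t)^{l_jl_{j'}}$, and concludes that this power of $h_{p,j}^N$ is $\equiv 1 \bmod p$, hence the $t$-expansion of $h_{p,j}$ is constant. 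Your route, if completed, would be conceptually attractive because it explains the constancy via the $q$-expansion principle for $H_{p,j}$ rather than via a computation; but as written it has two genuine gaps.

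First, the scalar-comparison step does not follow from Part~1. Part~1 only gives that the reduction of the polynomial describing $\mathrm{div}(h_{p,j})$ has the \emph{same zero locus} as $\mathrm{ph}_{p,j}(t)$, which by definition~\eqref{eq:phpdef} is a reduced (squarefree) polynomial, whereas the intersection divisor $\mathcal{Y}_{\bar p}\cdot D_j$ carries intersection multiplicities. Two sections of the same line bundle on $\pro^1$ with equal zero \emph{sets} but possibly different multiplicities need not differ by a scalar (they have equal total degree, but the multiplicities can be distributed differently), so the identity $\bar h_{p,j}=\kappa\,H_{p,j}|_{\mathcal{Y}_{\bar p}}$ is not established. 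Second, and more seriously, the entire weight of the argument lands on the claim that the comparison factor $\lambda$ between the Tate trivialization and the trivialization underlying the $t$-expansion of~\cite{B} is a \emph{constant} unit of $\overline{\F}_p[[t]]$, and you only assert that you ``expect'' this. This is not a routine compatibility: the $t$-expansions of~\cite{B} come from normalized solutions of the Picard--Fuchs equations (periods), and relating that trivialization to the Tate trivialization modulo $p$ is exactly the hard analytic-to-algebraic comparison that the congruence~\eqref{eq:cong} packages — it is essentially equivalent to the statement you are trying to prove. Without an actual proof that $\lambda$ is constant, the argument is circular at its core step.
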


The proof of Theorem~\ref{thm:phi} is based on results of~\cite{B} on the integrality of the solutions of the Picard-Fuchs differential equations attached to the Kobayashi curve~$Y$, and on the congruences they satisfy; the results are recalled in~Section~\ref{sec:intro1}. These Picard-Fuchs differential equations are defined over~$\pro^1(\C)$, and because of this we restrict to the case of genus zero curves. The possibility of lifting restrictions of partial Hasse invariants to curves is likely not limited to the genus zero case though, and this will be investigated in the future.

We use the results above to deduce some properties of the modulo~$p$ reduction of more general twisted modular forms with~$p$-integral~$t$-expansions, and to compute the cardinality of the intersections~$\mathcal{X}_{\bar{p}}\cap D_j$, that is, the degree of~$\mathrm{ph}_{p,j}(t)$. In the first application, we find that the~$t$-expansion of a twisted modular form of suitable weight reduces modulo~$p$ to a rational function with poles on the non-ordinary locus of~$Y$.
\begin{corollary*}[Corollary~\ref{cor:modp}] 
Let~$p\not \in S$ be a prime unramified in~$F$. Let~$g$ be a twisted modular form of weight~$(-k+lp,-l+kp)$ for~$k,l\in\Z_{\ge0}$ with~$p$-integral~$t$-expansion. The following holds for the $t$-expansion of~$g$ 
\[
g(t)\equiv\frac{P_g(t)}{R(t)}\mod p\,,
\]
where~$P_g(t)$ is a polynomial which depends on~$g$, and~$R(t)$ is a polynomial with zeros on the non-ordinary locus which only depends on~$k$ and~$l$.
\end{corollary*}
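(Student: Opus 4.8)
The plan is to exploit that $g$ shares its weight with a monomial in the two lifted partial Hasse invariants, reducing the statement to the analysis of a single rational function of the Hauptmodul. First I would record the weight bookkeeping: since $h_{p,1}$ and $h_{p,2}$ have weights $(-1,p)$ and $(p,-1)$, the monomial $h_{p,1}^{k}h_{p,2}^{l}$ has weight $k(-1,p)+l(p,-1)=(-k+lp,-l+kp)$, which is exactly the weight of $g$. Hence
\[
\phi\:=\frac{g}{h_{p,1}^{k}h_{p,2}^{l}}
\]
is a twisted modular form of weight $(0,0)$, that is, a $\Gamma$-invariant meromorphic function on $\Po$ which is meromorphic at the cusps (for multi-weight $\vec 0$ the automorphy factor is trivial). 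Because $Y\simeq\Po/\Gamma$ has genus zero, such a function descends to a rational function of the Hauptmodul, so I may write $\phi=P_g(t)/R(t)$ with $P_g,R\in\C[t]$.

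The second step is to identify $R$ by analyzing the divisor of $\phi$. As $g$, $h_{p,1}$ and $h_{p,2}$ are holomorphic on $\Po$, and $g$ is holomorphic at $c$ (its $t$-expansion being an honest $p$-integral power series), the polar divisor of $\phi$ is supported on the zero locus of $h_{p,1}^{k}h_{p,2}^{l}$. By Theorem~\ref{thm:phi}, Part~1, the zeros of $h_{p,j}$ in a fundamental domain are cut out by a $p$-integral polynomial $\mathrm{Ph}_{p,j}(t)$ reducing to $\mathrm{ph}_{p,j}(t)$ modulo $p$, and these zeros lie on the non-ordinary locus; moreover $h_{p,j}$ is nonvanishing at the cusps, since it reduces to a partial Hasse invariant whose $q$-expansion is constant and nonzero there. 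Thus $\phi$ acquires no pole at any cusp, and one may take $R(t)=\mathrm{Ph}_{p,1}(t)^{k}\mathrm{Ph}_{p,2}(t)^{l}$, a polynomial depending only on $k$ and $l$ whose reduction $\mathrm{ph}_{p,1}(t)^{k}\mathrm{ph}_{p,2}(t)^{l}$ has zeros on the non-ordinary locus; the remaining numerator $P_g(t)$ then depends on $g$.

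Finally I would pass to the reduction modulo $p$. Rewriting the identity as $g=\phi\,h_{p,1}^{k}h_{p,2}^{l}$ and taking $t$-expansions at $c$, Theorem~\ref{thm:phi}, Part~2, gives $h_{p,j}(t)\equiv c_j\bmod p$ with $c_j$ a $p$-adic unit: it cannot be zero, as otherwise $h_{p,j}$ would reduce to $0$ instead of to a partial Hasse invariant. Hence $g(t)\equiv c_1^{k}c_2^{l}\,\phi(t)\bmod p$; since $g(t)$ is $p$-integral and $c_1^{k}c_2^{l}$ is a unit, $\phi(t)=P_g(t)/R(t)$ has $p$-integral $t$-expansion, and as $R\not\equiv0\bmod p$ its reduction equals $\overline{P_g}/\overline{R}$. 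Absorbing $c_1^{k}c_2^{l}$ into $P_g$ yields the asserted congruence. I expect the main obstacle to be the divisor bookkeeping of the middle step: one must rule out spurious poles of $\phi$ at the elliptic points of $\Gamma$, where the twisted transformation law permits fractional vanishing orders and where $h_{p,j}$ may a priori vanish, and one must confirm the $p$-integrality of $P_g$. For torsion-free $Y$ there are no elliptic points and the argument is clean; in general it requires controlling the local orders of $h_{p,j}$ at the elliptic points for $p\notin S$.
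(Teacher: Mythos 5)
Your proposal follows essentially the same route as the paper's proof of Corollary~\ref{cor:modp}: divide by the monomial in the lifted partial Hasse invariants of matching weight (the paper uses $f_j\,h_{p,j}^{-k}$), observe that the quotient has weight zero and is therefore a rational function of the Hauptmodul, identify its polar divisor with the partial Hasse polynomial, and conclude by the mod-$p$ constancy of the $t$-expansion of $h_{p,j}$. Two points deserve attention. First, your assertion that "for multi-weight $\vec{0}$ the automorphy factor is trivial" silently assumes that $h_{p,1}^{k}h_{p,2}^{l}$ has trivial \emph{multiplier system}; since the $h_{p,j}$ are built from roots ($\Delta$, the $Q_i$, and the $B_j$), this is not automatic, and it is exactly why the paper's version of the corollary carries the hypothesis "$2\mid k$ or $p$ sufficiently big" via Theorem~\ref{thm:phi2}. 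In the torsion-free setting in which the introduction states the result, triviality holds for all odd $p$, so your argument goes through there, but you should cite this rather than appeal to the weight alone. Second, the elliptic-point obstacle you flag at the end is real and is precisely what the paper's proof resolves: writing $f_jB_j^{k}B_{j'}^{-kp}=(\Delta/t)^{m}\prod_iQ_i^{b_i}g_j(t)$ and comparing the exponents of the $Q_i$ with those occurring in the definition~\eqref{eq:hpj} of $h_{p,j}$, the paper deduces $b_i-k\epsilon_{j,i}\le 0$ from integrality of the local orders of a weight-zero function, which produces the additional factors $(t(e_i)-t)^{\epsilon_{j,i}a_i}$ in the denominator of~\eqref{eq:fjmodp}; this bookkeeping also yields the degree bound on the numerator. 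In the torsion-free case these factors are absent and your divisor-theoretic argument, together with your (correct) unit/integrality argument for $P_g$, is complete.
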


In the second application we prove that, under the assumption~\eqref{eq:ass} on the order of the elliptic points, the the degree of~$\mathrm{ph}_{p,j}(t)$ is given by the dimension of a space of twisted modular forms. We state here a version of the result in the torsion-free case and for~$p$ inert in~$F$ (the statement is different in the presence of elliptic points).
\begin{corollary*}[Corollary~\ref{cor:degree}]
Let~$\mathcal{X}\to\mathcal{Y}$ be the integral model associated to a torsion-free genus zero curve in a Hilbert modular surface, and let~$\mathrm{ph}_{p,j}(t)$ be the polynomial describing the component of the non-ordinary locus~$\mathcal{X}_{\bar{p}}\cap D_j$ for~$j=1,2$. Then
\[
\deg(\mathrm{ph}_{p,j})\=\begin{cases}
\dim M_{(-1,p)}(\Gamma,\varphi)\-1\quad&\text{if }j=1\,,\\
\dim M_{(p,-1)}(\Gamma,\varphi)\-1\quad&\text{if }j=2\,,\\
\end{cases}
\]
where~$M_{(k_1,k_2)}(\Gamma,\varphi)$ is the space of twisted modular forms of weight~$(k_1,k_2)$ for~$(\Gamma,\varphi)$ over~$\C$.
\end{corollary*}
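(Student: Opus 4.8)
The plan is to read both sides of the equality off the same automorphic line bundle on the genus zero curve and to compute them by Riemann--Roch on $\pro^1$. Write $X=\overline{Y}\cong\pro^1$ for the smooth compactification of $Y\cong\Po/\Gamma$, and let $\mathcal{L}_{\vec{k}}$ denote the line bundle on $X$ whose global sections are the twisted modular forms of weight $\vec{k}$ for $(\Gamma,\varphi)$, so that $M_{\vec{k}}(\Gamma,\varphi)\cong H^0(X,\mathcal{L}_{\vec{k}})$. Since $X$ has genus zero, Riemann--Roch gives $\dim_{\C}M_{\vec{k}}(\Gamma,\varphi)=\deg\mathcal{L}_{\vec{k}}+1$ as soon as $\deg\mathcal{L}_{\vec{k}}\ge 0$; and the existence of the nonzero form $h_{p,j}$ from Theorem~\ref{thm:phi} shows that $M_{\vec{k}_j}(\Gamma,\varphi)\neq 0$ for $\vec{k}_1=(-1,p)$ and $\vec{k}_2=(p,-1)$, hence that this hypothesis holds. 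Thus the right-hand side of the corollary equals $\deg\mathcal{L}_{\vec{k}_j}$, and it remains to identify $\deg(\mathrm{ph}_{p,j})$ with the same degree.

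For the left-hand side I would work on the reduction $\mathcal{X}_{\bar{p}}\cong\pro^1_{\overline{\F}_p}$. By definition the partial Hasse invariant $H_{p,j}$ is a section of the weight-$\vec{k}_j$ automorphic bundle on $\mathcal{M}_{F,s}$ whose divisor is exactly the non-ordinary component $D_j$; for the primes under consideration $\mathcal{X}_{\bar{p}}$ is not contained in $D_j$, so the restriction $H_{p,j}|_{\mathcal{X}_{\bar{p}}}$ is a nonzero section of $\mathcal{L}_{\vec{k}_j}|_{\mathcal{X}_{\bar{p}}}$ whose zero divisor is $\mathcal{X}_{\bar{p}}\cap D_j$, counted with intersection multiplicities. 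By Theorem~\ref{thm:phi}, Part~1, this zero divisor is cut out by $\mathrm{ph}_{p,j}(t)$, so its degree is $\deg(\mathrm{ph}_{p,j})$. I would then argue that these are all the zeros of $H_{p,j}|_{\mathcal{X}_{\bar{p}}}$: because $Y$ is torsion-free there are no elliptic points, and because the $q$-expansion of $H_{p,j}$ is a unit at every cusp the divisor $D_j$ misses the cusps, so $H_{p,j}|_{\mathcal{X}_{\bar{p}}}$ has no zero at any cusp of $X$. The degree formula for line bundles on $\pro^1$ then gives $\deg(\mathrm{ph}_{p,j})=\deg(\mathcal{L}_{\vec{k}_j}|_{\mathcal{X}_{\bar{p}}})$, and since the automorphic bundle is defined over the integral model and has locally constant fibre degree, this equals $\deg\mathcal{L}_{\vec{k}_j}$ computed over $\C$. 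Combining with the first paragraph yields $\deg(\mathrm{ph}_{p,j})=\deg\mathcal{L}_{\vec{k}_j}=\dim_{\C}M_{\vec{k}_j}(\Gamma,\varphi)-1$.

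The main obstacle I anticipate is the bookkeeping at the boundary and in passing to the special fibre. On the one hand one must verify that no zero of $H_{p,j}$ is concealed at a cusp of $X$: the constancy of the $q$-expansion of the partial Hasse invariant handles the surface $\mathcal{M}_{F,s}$, but one still has to check that each cusp of $Y$ maps to such a point, for which Theorem~\ref{thm:phi}, Part~2 --- that the $t$-expansion of $h_{p,j}$ is constant modulo $p$ --- provides the non-vanishing at the cusp $c$ where $t=0$, the normalization of $t$ as a unit at the remaining cusps covering the others. On the other hand, if one prefers to count zeros of the characteristic-zero lift $h_{p,j}$ directly rather than of $H_{p,j}$, one must ensure that the degree of the polynomial describing its zeros is preserved under reduction modulo $p$, i.e.\ that its leading coefficient is a $p$-unit, so that no intersection point is lost in the special fibre; this is exactly what turns the identification of $\deg(\mathrm{ph}_{p,j})$ with $\deg\mathcal{L}_{\vec{k}_j}$ into an equality, and is where the integrality and congruence results of~\cite{B} recalled in Section~\ref{sec:intro1} enter. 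A final routine point is to confirm that $\mathrm{ph}_{p,j}$ records the intersection $\mathcal{X}_{\bar{p}}\cap D_j$ with the correct multiplicities, so that $\deg(\mathrm{ph}_{p,j})$ is genuinely the degree of the zero divisor and not merely the number of distinct points.
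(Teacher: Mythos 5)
Your route is genuinely different from the paper's. The paper never restricts the characteristic-$p$ partial Hasse invariant to the curve and never invokes Riemann--Roch on the special fibre: it identifies $M_{\vec k}(\Gamma,\varphi)$ with an explicit monomial space $M_{m,\vec b,\vec\beta}(\Gamma,\varphi)$ of dimension $m+1$ (the identification being unique under assumption~\eqref{eq:ass}), observes from the explicit formula~\eqref{eq:hpj} that $h_{p,j}B_jB_{j'}^{-p}=(\Delta/t)^{d^e_{p,j}}\prod_iQ_i^{\epsilon_{j,i}}\cdot\widetilde{\mathrm{ph}^e_{p,j}}(t)$ lies in that space, and reads off $m=d^e_{p,j}$. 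The degree-invariance under reduction that you correctly flag as the delicate point is, in the paper, an input from~\cite{B} (namely~\eqref{eq:degapj}), so that part of your argument can be closed by citation. Your approach is more geometric and is the one that would have a chance of generalizing beyond genus zero; the paper's is elementary and, via the uniqueness in~\eqref{eq:ass}, also controls the multiplier system and the elliptic points.

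There is, however, a genuine gap at the step you defer as ``routine''. Riemann--Roch and the degree formula on $\pro^1$ compute the degree of the zero \emph{divisor} of $H_{p,j}|_{\mathcal{X}_{\bar p}}$, with multiplicities, whereas $\mathrm{ph}_{p,j}(t)$ is by definition the reduced polynomial $\prod(t-t_0)$ over the \emph{distinct} points of $\mathcal{X}_{\bar p}\cap D_j$. These need not agree: by~\eqref{eq:ajp} the intersection with $D_j$ has multiplicity $1+\mathrm{ord}_{\tau_0}\varphi_j'$ at every non-ordinary reduction of a zero of $\varphi_j'$, and for $j\ge 2$ the divisor of $\varphi_j'$ is nonempty even when $\Gamma$ is torsion-free, since its degree is $(\lambda_j-1)\chi(\Po/\Gamma)>0$. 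This is exactly why the body of the paper states Corollary~\ref{cor:degree} for the polynomial $\mathrm{ph}^e_{p,j}$ of~\eqref{eq:phipole}, which carries these exponents, rather than for the reduced $\mathrm{ph}_{p,j}$. As written, your argument proves the identity for the divisor degree, i.e.\ for $\deg\mathrm{ph}^e_{p,j}$; to obtain the statement for $\deg\mathrm{ph}_{p,j}$ with $j=2$ you would additionally need that no zero of $\varphi_2'$ reduces to a point of $D_2$ modulo $p$, which is not automatic and is not assumed. (For $j=1$ the issue disappears, since $\varphi_1'=1$ has no zeros and all intersection multiplicities are $1$.) Either restate the conclusion in terms of the non-reduced intersection divisor, or supply an argument excluding non-ordinary reduction of the zeros of $\varphi_{j}'$.
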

By using this corollary we obtain analogues of Deuring's formula~\eqref{eq:Deuring} for the intersection numbers of~$\mathcal{X}_{\bar{p}}$ and the components of the non-ordinary locus. We do it explicitly for many Teichmüller curves~$W_D$ in a Hilbert modular surface of discriminant~$D$ in Section~\ref{sec:example}. As a concrete example, for~$D=13$ and~$p\neq 2,3,13$ inert in~$\Q(\sqrt{13})$ the analogues of Deuring's formula for~$W_{13,\bar{p}}$ are
\begin{equation}
\label{eq:W13}
\deg(\mathrm{ph}_{p,1}(t))\=\Bigl\lfloor\frac{p-3}{4}\Bigl\rfloor\+\epsilon\,,\quad \deg(\mathrm{ph}_{p,2}(t))\=\Bigl\lfloor\frac{3p-1}{4}\Bigl\rfloor\+\epsilon\,,
\end{equation}
where~$\epsilon=1$ if~$p\equiv 1 \mod 4$ and~$\epsilon=0$ otherwise. The first values are displayed in the following table, also for~$p$ split in~$\Q(\sqrt{13})$ (the general formula for this case is in Proposition~\ref{prop:WD}): 
\[
\begin{array}{ccccccccccccc}
\toprule
p & 5 & 7 & 11 & 17 & 19 & 23 & 29 & 31 & 37 & 41 & 43 & 47\\
\midrule
\deg\mathrm{ph}_{p,1} & 1 & 1 & 2 & 12 & 4 & 17 & 21 & 7 & 9 & 10 & 32 & 11\\
\midrule
\deg\mathrm{ph}_{p,2} & 4 & 5 & 8 & 4 & 14 & 6 & 22 & 23 & 28 & 31 & 32 & 35\\
\bottomrule
\end{array}
\]
A result of Bachmat-Goren implies that~$D_1$ and~$D_2$ are the components of the supersingular locus for~$p$ inert in~$F$ (see the beginning of Secion~\ref{sec:example} for the complete statement). Therefore the formulae~\eqref{eq:W13} count the points of supersingular reduction of~$W_{13}$  for a prime~$p$ inert in~$\Q(\sqrt{13})$. One deduces the following bounds for the size of the supersingular locus of~$W_{13,p}$:
\[
\Bigl\lfloor\frac{3p-1}{4}\Bigl\rfloor\+\epsilon\le \sharp(W_{13,p}\cap (D_1\cup D_2))\le p-1\+\epsilon\,.
\]
The proof for more general~$D$ and the analogous result for~$p$ split can be found in Proposition~\ref{prop:WD} and Proposition~\ref{prop:W5} in Section~\ref{sec:example}.

In Section~\ref{sec:expl} we give an alternative construction (Theorem~\ref{thm:phi3}) of lifts of partial Hasse invariant for the triangle curve~$\Delta(2,5,\infty)\simeq W_5$. It exploits the existence of an explicit integral model, and the possibility to parametrize it in terms of modular forms. We use it to write down the Hasse-Witt matrix in terms of modular forms, and identify its entries with the lifts of the partial Hasse invariants. It is similar in spirit to the elementary description of the Hasse invariant for elliptic curves we gave at the beginning of the introduction. 

Finally, for the group~$\Delta(2,5,\infty)$ and the primes~$p=11,13$ we compute closed-form~$t$-expansions of lifts of partial Hasse invariants in terms of hypergeometric functions. For~$p=13$ we obtain the formulae
\[
h_{13,1}(\tau)^2\=(1-t)\cdot\frac{{}_2F_1\Bigl(\frac{9}{20},\frac{1}{20};1;t\Bigr)^{26}}{{}_2F_1\Bigl(\frac{7}{20},\frac{3}{20};1;t\Bigr)^2}\,,\quad
h_{13,2}(\tau)^2\=p_{13}(t)\cdot \frac{{}_2F_1\Bigl(\frac{7}{20},\frac{3}{20};1;t\Bigr)^{26}}{{}_2F_1\Bigl(\frac{9}{20},\frac{1}{20};1;t\Bigr)^2}\,,
\]
where~$p_{13}(t)=\frac{10249593282075625}{4194304} - \frac{10435551419195625}{4194304}t + \frac{729693733125}{16384}t^2 - \frac{51480625}{256}t^3$.
The procedure easily adapts to other primes.

\begin{Ack}
G.\,Bogo is funded by the Deutsche Forschungsgemeinschaft (DFG, German Research Foundation) — SFB-TRR 358/1 2023 — 491392403.
Y.\,Li is supported by the Heisenberg Program of the DFG, project number 539345613.
\end{Ack}

\section{Curves in Hilbert modular varieties}
\subsection{Partial Hasse invariants of Hilbert modular varieties}
\label{sec:charp}
Let~$F\subset\R$ be a totally real field of degree~$g$ over~$\Q$ and let~$\OO_F$ denote its ring of integers. Let~$R$ be a ring.
An~\emph{abelian variety over~$R$ with real multiplication by $\mathcal{O}_F$} is an abelian variety~$A\to\mathrm{Spec}(R)$ of dimension~$g$ together with a ring injection~$\iota\colon\mathcal{O}_F\to\mathrm{End}_R(A)$ such that the following condition holds. Let~$A^\vee$ be the dual abelian variety, and notice that~$\mathcal{O}_F$ acts on~$A^\vee$ by duality. Define 
\[
M_A:=\{\lambda\colon A\to A^t\,:\,\lambda\circ\iota(r)=\iota(r)^\vee\circ\lambda\,,\,\forall r\in\OO_F \}
\]
to be the~$\OO_F$-module of symmetric~$\OO_F$-linear homomorphisms from $A$ to $A^\vee$. We assume that the Deligne-Pappas condition~$A\otimes_{\OO_F}M_A\simeq A^\vee$ holds (this condition is always satisfied if~$\mathrm{char}(R)=0$). See Chapter 3 of  the book~\cite{GoBook} for details. 

For~$n\ge3$ there exists a fine moduli scheme~$\mathcal{M}_F^n\to\Spec(\Z[n^{-1}])$ classifying isomorphism classes of polarized abelian varieties with real multiplication by~$\OO_F$ and full level~$n$ structure. By forgetting the level structure one gets a scheme~$\mathcal{M}_F\to\Spec(\Z)$, which is a coarse moduli space for abelian varieties with real multiplication by~$\OO_F$. 
The scheme~$\mathcal{M}_F$ is the disjoint union of components~$\mathcal{M}_F(\fa)$, where~$\fa$ runs over the the strict class group~$Cl(F)^+$ of~$F$. For~$R=\C$ the component~$\mathcal{M}_F(\fa)(\C)$ corresponds to the quotient of~$\Po^g$ by the Hilbert modular group~$\SL(\OO_F\oplus\fa)$. This group acts on~$\Po^g$ via Möbius transformations:
\begin{equation}
\label{eq:action}
\SL(\mathcal{O}_F\oplus\fa)\ni\gamma=\begin{pmatrix}
a & b\\
c & d
\end{pmatrix}
\mapsto \gamma\cdot (z_1,z_2,\dots,z_g)\:=\biggl(\frac{az_1+b}{cz_1+d},\frac{a^{\sigma_2} z_2+b^{\sigma_2}}{c^{\sigma_2}z_2+d^{\sigma_2}},\dots,\frac{a^{\sigma_g} z_2+b^{\sigma_g}}{c^{\sigma_g} z_2+d^{\sigma_g}}\biggr)\,,
\end{equation}
where~$x^{\sigma_j}=\sigma_j(x)$, for~$j=1,\dots,j$, and $\sigma_1,\dots,\sigma_g$ denote the real embeddings of~$F$ (we set $\sigma_1=Id$).
In the following, we denote by~$M_F$ the complex points of~$\mathcal{M}_F$ and by~$\mathcal{M}_{F,s}$ the special fiber of~$\mathcal{M}_F$ over~$s=\mathrm{spec}(\overline{\F}_p)$ for~$p$ a prime unramified in~$F$.  

Let~$k$ be a field of characteristic~$p$ and let~$\overline{k}$ be an algebraic closure of~$k$. The~\emph{$p$-rank} of a~$g$-dimensional abelian variety~$A/k$ is the rank of the group of~$p$-torsion points of~$A(\overline{k})$. Recall that the abelian variety~$A/k$ is called~\emph{ordinary} if its~$p$-rank is maximal, i.e., if~$A[p](\overline{k})\simeq(\Z/p\Z)^g$; otherwise it is called~\emph{non-ordinary}. The abelian variety~$A/k$ is~\emph{supersingular} if it is isogenous over~$k$ to a product of~$g$ supersingular elliptic curves; if this isogeny is an isomorphism, one says that~$A/k$ is~\emph{superspecial}. 

The locus of ordinary abelian varieties in~$\mathcal{M}_{F,s}$ at~$s=\mathrm{Spec}({\overline{\F}_p})$ is a dense subscheme, and its complement is called the~\emph{non-ordinary locus}.
The non-ordinary locus in~$\mathcal{M}_{F,s}$ can be described in terms of divisors of certain Hilbert modular forms called~\emph{partial Hasse invariants}.
\begin{theorem*}[Goren~\cite{G2}, Andreatta-Goren~\cite{AG}]
\begin{enumerate}[wide=0pt]
Let~$F$ as above and~$p$ be a prime unramified in~$F$.
\item The non-ordinary locus in~$\mathcal{M}_{F,s}$ has~$g$ components, denoted by~$D_j$ for~$j=1,\dots,g$. There exists~$g$ Hilbert modular forms~$h_1,\dots,h_g$ of (non-parallel) weight~$p-1$, called~\emph{partial Hasse invariants}, such that~$\mathrm{div}(h_j)=D_j$. The partial Hasse invariants can be defined over a suitable finite field~$\F$.
\item For every~$j=1,\dots,g$, the~$q$-expansion of the partial Hasse invariant~$h_j$ at every cusp is constant and equal to~$1$.
\end{enumerate}
\end{theorem*}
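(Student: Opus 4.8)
The plan is to build the forms $h_1,\dots,h_g$ directly from the Verschiebung isogeny, using the action of $\OO_F$ to split every sheaf into rank-one pieces. Let $\pi\colon A\to\mathcal{M}_{F,s}$ be the universal abelian scheme and $\underline{\omega}=\pi_*\Omega^1_{A/\mathcal{M}_{F,s}}$ its Hodge bundle, a locally free sheaf of rank $g$ carrying an $\OO_F$-action. Since $p$ is unramified, $\OO_F\otimes\overline{\F}_p\cong\prod_{j=1}^{g}\overline{\F}_p$ through the $g$ reductions $\tau_1,\dots,\tau_g\colon\OO_F\to\overline{\F}_p$, so $\underline{\omega}=\bigoplus_{j=1}^{g}\underline{\omega}_j$ decomposes into line bundles indexed by the $\tau_j$, and the absolute Frobenius of $\overline{\F}_p$ permutes the $\tau_j$ cyclically within the orbit attached to each prime of $\OO_F$ above $p$. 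I would define $h_j$ from the Verschiebung $V\colon A^{(p)}\to A$: pulling back differentials gives $V^*\colon\underline{\omega}\to\underline{\omega}^{(p)}$, and since $\mathrm{Fr}^*$ of a line bundle is its $p$-th tensor power, on the $\tau_j$-eigenspace this is an $\OO_{\mathcal{M}_{F,s}}$-linear map $\underline{\omega}_{\tau_j}\to\underline{\omega}_{\tau_{j'}}^{\otimes p}$, where $\tau_{j'}$ is the Frobenius-predecessor of $\tau_j$. Viewing this map as a global section of $\underline{\omega}_{\tau_j}^{-1}\otimes\underline{\omega}_{\tau_{j'}}^{\otimes p}$ produces $h_j$; its weight is $-1$ in slot $j$ and $+p$ in the adjacent slot, so the total weight is $p-1$, exactly as in the elliptic case, where $V^*\colon\underline{\omega}\to\underline{\omega}^{\otimes p}$ recovers the classical Hasse invariant in $\underline{\omega}^{\otimes(p-1)}$.

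For part (1) I would then identify the vanishing of the $h_j$ with non-ordinarity. The standard criterion is that $A$ is ordinary at a geometric point if and only if the Hasse--Witt matrix, i.e. the Frobenius on $H^1(A,\OO_A)$ or dually $V^*$ on $\underline{\omega}$, is invertible there. The $\OO_F$-action block-diagonalizes this $g\times g$ matrix into the $g$ one-by-one blocks $h_1,\dots,h_g$. Hence a point is ordinary iff all $h_j$ are nonzero, and the non-ordinary locus is exactly $\bigcup_{j}\{h_j=0\}$. Setting $D_j:=\{h_j=0\}$ gives the $g$ claimed components. Rationality over a finite field $\F$ is automatic from the construction: $V$ and the $\OO_F$-structure descend to $\F_p$, and the splitting of $\underline{\omega}$ is defined over the finite field containing the residue fields of the primes above $p$.

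The substantive point in part (1) is to upgrade $\{h_j=0\}$ to a reduced, irreducible divisor so that $\mathrm{div}(h_j)=D_j$, and this is where I expect the real work to lie. It requires a local study of the universal deformation through Dieudonn\'{e} theory (or Zink displays): near a point of $\{h_j=0\}$ one produces explicit coordinates on the deformation space in which $h_j$ becomes a single coordinate function, so that its zero locus is smooth of codimension one, hence reduced, and then one argues connectedness/irreducibility of each $D_j$. This smoothness-and-irreducibility analysis is the technical core of Andreatta--Goren; by contrast, the construction of the forms themselves is formal.

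Finally, for part (2) I would evaluate $h_j$ on the semi-abelian degeneration at a cusp. Via the Tate--Mumford uniformization the universal object degenerates to a totally degenerate object whose connected part is a torus of the form $\mathbb{G}_m\otimes\OO_F$, and whose cotangent space is canonically trivialized, eigenspace by eigenspace, by the invariant differentials $dt/t$. On a torus one has $V=\mathrm{id}$ (as $V\circ F=[p]$ and $F$ is the $p$-power map), so $V^*$ sends the canonical generator of $\underline{\omega}_{\tau_j}$ to the canonical generator of $\underline{\omega}_{\tau_{j'}}^{\otimes p}$ with coefficient exactly $1$. Since the $q$-expansion of a Hilbert modular form is by definition its coordinate with respect to this canonical trivialization, it follows that the $q$-expansion of each $h_j$ is the constant $1$ at every cusp, proving part (2).
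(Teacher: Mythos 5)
The paper does not actually prove this statement: it is quoted as background, with the proof deferred entirely to the cited references of Goren and Andreatta--Goren. So there is no internal proof to compare against; what can be said is whether your sketch correctly reproduces the argument of those references, and it essentially does. The construction of $h_j$ as the $\tau_j$-component of $V^*\colon\underline{\omega}\to\underline{\omega}^{(p)}$, using the splitting $\OO_F\otimes\overline{\F}_p\cong\prod_j\overline{\F}_p$ and the identification $(\underline{\omega}^{(p)})_{\tau_j}\cong\underline{\omega}_{\tau_{j'}}^{\otimes p}$ with $\tau_{j'}$ the Frobenius-predecessor, is exactly the standard one; it yields the weight $(-1$ in slot $j$, $+p$ in slot $j')$ that the paper uses later (e.g.\ weights $(-1,p)$ and $(p,-1)$ for $p$ inert in a real quadratic field, collapsing to $(p-1,0)$ and $(0,p-1)$ when $p$ splits and $j'=j$). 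The ordinarity criterion via the block-diagonalized Hasse--Witt matrix, and the $q$-expansion computation on the Tate degeneration using $V=\mathrm{id}$ on $\mathbb{G}_m$ and the canonical trivialization by $dt/t$, are likewise the arguments of the cited sources.

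Two caveats. First, you correctly identify that the real content of part (1) is the equality $\mathrm{div}(h_j)=D_j$ as divisors, i.e.\ that the scheme-theoretic vanishing locus of $h_j$ is reduced; this is indeed established in Andreatta--Goren by a local analysis of the universal display/Dieudonn\'e module, and your sketch defers it honestly rather than proving it. Second, be careful with ``irreducible'': what the references prove is that each $D_j$ is a reduced divisor and that the non-ordinary locus is $\bigcup_j D_j$; irreducibility of each individual $D_j$ is not part of the standard statement (the paper's wording ``has $g$ components'' is informal shorthand for this decomposition into the $g$ divisors $D_j$, not a claim that each is irreducible), so you should not promise to ``argue connectedness/irreducibility of each $D_j$'' as if it were needed. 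With that adjustment, your proposal is a faithful account of the proof the paper outsources.
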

Partial Hasse invariants are fundamental objects for the theory of Hilbert modular forms in positive characteristic, and share some properties with the classical Hasse invariant of elliptic curves. Nevertheless, it is important to emphasize that the partial Hasse invariants~$h_i$, if~$\dim{\mathcal{M}_F}>1$ , are genuinely characteristic~$p$ objects, i.e., they do not extend to the integral model $\mathcal{M}_F$.
The obstruction is intrinsic: each partial Hasse invariant~$h_i$ has non-parallel weight and is not a cusp form, and Hilbert modular forms with these properties simply do not exist in characteristic zero. By contrast, the product~$H=\prod_{j=1}^g{h_j}$ of all partial Hasse invariants, called the~\emph{total Hasse invariant}, lifts to a characteristic-zero Hilbert modular form of parallel weight (more precisely, some power of~$H$ lifts). Whether a lift is a Hilbert Eisenstein series of parallel weight~$E_{p-1}$, as happens in the one-dimensional case is ultimately related, in the case~$g=2$, to Leopold's conjecture (see the last section of~\cite{G2}).

\subsection{Affine Kobayashi geodesics and their non-ordinary locus}
\label{sec:phpol}
An algebraic curve~$Y\hookrightarrow A_g$ embedded in the space of polarized~$g$-dimensional abelian varieties is called a~\emph{Kobayashi geodesic} if the induced map of universal coverings~$\Po\to\Po_g$, which are identified with~$\Po$ and the Siegel upper-half space~$\Po_g$ respectively, is totally geodesic with respect to the Kobayashi metric. 

A theorem of Möller and Viehweg proves that, in the affine case, Kobayashi geodesics are curves in Hilbert modular varieties.
\begin{theorem*}[Möller-Viehweg~\cite{MV}]
Let~$Y\hookrightarrow A_g$ be an affine Kobayashi geodesic induced by a family~$f\colon X\to Y$ of polarized abelian varieties. If the general fiber of~$f$ is simple, there exists a totally real field~$F$ with~$[F:\Q]=g$ such that~$Y\hookrightarrow A_g$ factors through the Hilbert modular variety~$M_F$. 
Moreover, the embedding~$Y\hookrightarrow A_g$ can be defined over a number field~$K$. 
\end{theorem*}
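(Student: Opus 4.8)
The plan is to read off the arithmetic structure of the family $f\colon X\to Y$ from the weight-one variation of Hodge structure (VHS) it carries, following the Higgs-bundle approach to totally geodesic subvarieties. Let $\V=R^1f_*\Q$ be this VHS, and let $(E,\theta)$ be the associated logarithmic Higgs bundle on a smooth projective compactification $\bar Y$, with Hodge grading $E=E^{1,0}\oplus E^{0,1}$ and Higgs field $\theta\colon E^{1,0}\to E^{0,1}\otimes\Omega^1_{\bar Y}(\log\Sigma)$, where $\Sigma=\bar Y\setminus Y$. The first step is to translate the differential-geometric hypothesis into Hodge theory. Since $\Po_g$ is an irreducible bounded symmetric domain, its Kobayashi metric agrees up to scaling with the invariant Hodge metric, and a holomorphic totally geodesic disk $\Po\to\Po_g$ corresponds to a homomorphism of the associated real Lie groups respecting the Hermitian structures. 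Following Möller's analysis of this situation, I would record the geodesic hypothesis as the existence of a rank-two sub-VHS $\Lo\subset\V$ whose Higgs field is \emph{maximal}, i.e.\ $\theta$ restricts to an isomorphism $L\xrightarrow{\sim}L^{-1}\otimes\Omega^1_{\bar Y}(\log\Sigma)$ of line bundles; this is exactly the uniformizing VHS realizing $Y\simeq\Po/\Gamma$, and the geodesic condition in addition rigidly constrains the complementary part of $\V$.

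The central step is to decompose $\V$ and extract the field. By Deligne semisimplicity the category of polarizable VHS is semisimple, so $\V$ splits into isotypical summands; the maximality of $\theta|_{\Lo}$ together with the Arakelov-type (Viehweg--Zuo) inequality forces the pieces carrying a nonzero Higgs field to be governed by a single such maximal line bundle up to twist. Analyzing the resulting endomorphism algebra $\End^0(X_\eta)$ of the generic fiber --- equivalently the centralizer of the monodromy in the Mumford--Tate group --- I expect to find that it contains a totally real field $F$ whose $g$ real embeddings index the eigenspace decomposition $\V\otimes_\Q\overline\Q=\bigoplus_{i=1}^g\Lo_i$, with $\Lo_1=\Lo$ the uniformizing piece. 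Here the simplicity hypothesis is decisive: because $X_\eta$ is simple, $\End^0(X_\eta)$ is a division algebra, and the compatibility of the Higgs grading with the polarization rules out the quaternionic and CM types of Albert's classification, leaving $\End^0(X_\eta)=F$ a totally real field with $[F:\Q]=g$. In other words, $f$ acquires real multiplication by an order in $F$, and $\V$ is the corresponding RM-VHS.

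The factorization is then formal. The real multiplication $\iota\colon\OO_F\to\End(X/Y)$ turns $(X\to Y,\iota)$ into a family of abelian varieties with real multiplication, hence defines a morphism $Y\to M_F$ to the Hilbert modular variety of $F$; composing with the forgetful embedding $M_F\hookrightarrow A_g$ recovers the original $Y\hookrightarrow A_g$, since both induce the same period map once $\Po_g$ is restricted to the totally geodesic $\Po^g\subset\Po_g$. For the field of definition I would argue by rigidity: a totally geodesic subvariety does not deform, so $Y\hookrightarrow A_g$ is an isolated point of the relevant Chow or Hilbert scheme, which is of finite type over $\Q$; for any $\sigma\in\Aut(\C)$ the conjugate $Y^\sigma$ is again an affine Kobayashi geodesic with simple generic fiber and the same numerical invariants, and there are only countably many such curves. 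A rigid subvariety with only countably many conjugates descends to $\overline\Q$, and being of finite type it is defined over a number field $K$.

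The step I expect to be the main obstacle is the second one. Passing from ``totally geodesic'' to ``a rank-two piece with maximal Higgs field'' is a clean Lie-theoretic computation, but controlling the \emph{entire} decomposition of $\V$ --- proving that every non-unitary summand is a Galois conjugate of $\Lo$ glued by one totally real field, and excluding the quaternionic and CM alternatives --- requires the full force of the simplicity assumption together with the compatibility of the Higgs grading with the polarization form. This is precisely where the hypothesis that $X_\eta$ is simple is indispensable, and where the Arakelov equality characterizing maximal Higgs fields must be combined with Albert's classification of endomorphism algebras; everything else is either standard Hodge theory or formal moduli-theoretic bookkeeping.
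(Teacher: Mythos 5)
This statement is quoted from M\"oller--Viehweg \cite{MV}; the paper itself gives no proof of it, so there is no internal argument to compare against. Your sketch does follow the strategy of the cited source: translating the Kobayashi-geodesic hypothesis into a decomposition of the logarithmic Higgs bundle of $R^1f_*\Q$ with a maximal-Higgs rank-two piece, invoking Deligne semisimplicity and the Arakelov/Viehweg--Zuo equality to control the remaining summands, using simplicity of the generic fiber together with Albert's classification to force $\End^0(X_\eta)$ to be a totally real field of degree $g$, and deducing the field of definition from rigidity of the maximal-Higgs condition. That is the right architecture.

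One claim in your first step is not correct as stated and should not be used as the bridge: on $\Po_g$ with $g\ge 2$ the Kobayashi metric is \emph{not} proportional to the invariant Hermitian (Bergman) metric --- it is only a Finsler metric, essentially a maximum over polydisk factors --- so ``totally geodesic for Kobayashi'' cannot be read off as a homomorphism of Lie groups respecting Hermitian structures. The actual content of \cite{MV} is precisely to characterize Kobayashi geodesics in $A_g$ intrinsically by the splitting of the Higgs bundle into a part with maximal Higgs field and a unitary part; this characterization is a theorem, not a Lie-theoretic triviality, and it is the input your second and third steps consume. If you replace the metric-comparison sentence by that characterization (which you in effect already assume), the rest of your outline is a faithful, if compressed, account of the known proof; the Albert-classification step and the rigidity/countability descent argument are where the real work sits, as you correctly flag.
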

It follows that a Kobayashi geodesic~$Y\hookrightarrow M_F$ defined over~$K$, can be endowed with an integral structure by taking its Zariski closure in the integral structure of~$\mathcal{M}_F$ discussed in Section~\ref{sec:charp}. The integral model is described by a family of polarized abelian varieties~$\mathcal{X}\to\mathcal{Y}$ with real multiplication by~$F$ defined over a ring
\begin{equation}
  \label{eq:R}
  R:=\mathcal{O}_K[S^{-1}],
\end{equation}
where~$\mathcal{O}_K$ is the ring of integers of~$K$, and~$S\subset\mathcal{O}_{K}$ is a finite set of primes.
We assume that if $\fp\in S$, then all its conjugates are in~$S$ or, equivalently, that $S$ comes from a set of rational primes. Then, via the map $\mathrm{Spec}(R)\to\mathrm{Spec}(\Z[S^{-1}])$, we can, and sometimes do, consider $\mathcal{X}$ as a scheme over $\Z[S^{-1}]$. 
A choice of model with minimal set~$S$ is currently unknown for generic Kobayashi curves (examples for some curves in Hilbert modular surfaces have been computed in Section 10 of \cite{BM}), and this is not our focus here. In the paper we assume that all the primes that ramify in~$R$ and in~$F$ are in~$S$, and make more specific assumptions later. 

In the case we consider, that of~$Y$ of genus zero and non compact, the associated integral model is of the form
\begin{equation}
\label{eq:model}
\mathcal{X}\to\spec\biggl(R\biggl[t,\frac{1}{\prod_{i=1}^s(t-t_i)}\biggr]\biggr)\,,
\end{equation}
for some~$t_1,\dots,t_s\in R$.

Fix a rational prime~$p\not\in S$ and an algebraic closure~$\overline{\F}_p$ of~$\F_p$.  Let~$\fp\subset\mathcal{O}_{K}$ be a prime over~$p$ and~$k_{\fp}:=\mathcal{O}_{K}/\fp\mathcal{O}_{K}$ the residue field. 

Consider the base change of the integral model~\eqref{eq:model} to the algebraic closure of the redisue field
\begin{equation}
\label{eq:modelp}
\mathcal{X}_{\bar{\fp}}:=\mathcal{X}\times_{\mathrm{spec}(R[t,\prod_{i=1}^s(t-t_i)^{-1}])}\spec\biggl(\overline{\F}_p\biggl[t,\frac{1}{\prod_{i=1}^s(t-t_i)}\biggr]\biggr)\,,
\end{equation}
where we assume that~$t_1,\dots,t_g$ stay distinct modulo~$\fp$. 
Denote by $\mathcal{X}_{\bar{\fp},t_{0}}$ the fiber of $\mathcal{X}_{\bar{\fp}}$ over $t=t_{0}\in\overline{\F}_p$.  
Since $\mathcal{X}_{\bar{\fp}}$ comes from reduction of a non-trivial family over $R$, for generic~$p$ only finitely many of its points are non-ordinary abelian varieties. Therefore, for such primes~$p$, the non-ordinary, supersingular, and superspecial locus of $\mathcal{X}_{\bar{\fp}}$  can be described via polynomials with coefficients in $\overline{\F}_p$:  
\[
\mathrm{no}_\fp(t):=\!\!\!\!\!\prod_{\overset{t_0\in\overline{\F}_p}{\mathcal{X}_{\bar{\fp},t_0}\text{non-ordinary}}}\!\!\!\!\!{(t-t_0)}\,,\quad
\mathrm{ss}_\fp(t):=\!\!\!\!\!\prod_{\overset{t_0\in\overline{\F}_p}{\mathcal{X}_{\bar{\fp},t_0}\text{supersingular}}}\!\!\!\!\!{(t-t_0)}\,,\quad
\mathrm{sp}_\fp(t):=\!\!\!\!\!\prod_{\overset{t_0\in\overline{\F}_p}{\mathcal{X}_{\bar{\fp},t_0}\text{superspecial}}}\!\!\!\!\!{(t-t_0)}\,.
\]
By means of the identification~$R\otimes\overline{\F}_p\simeq \prod_{\fp|p}\overline{\F}_p$, we define the~\emph{non-ordinary, supersingular}, and \emph{superspecial polynomials} associated to the modulo~$p$ reduction of~$Y$ respectively by 
\[
\mathrm{no}_p(t):=\bigl(\mathrm{no}_\fp(t)\bigr)_{\fp|p}\,,\quad \mathrm{ss}_p(t):=\bigl(\mathrm{ss}_\fp(t)\bigr)_{\fp|p}\,,\quad \mathrm{sp}_p(t):=\bigl(\mathrm{sp}_\fp(t)\bigr)_{\fp|p}\quad\in \prod_{{\fp|p}}\overline{\F}_p[t]\,.
\]

As recalled in Section~\ref{sec:charp}, the non-ordinary locus in~$\mathcal{M}_{F,s}$ is described by the union of divisors~$D_1,\dots,D_g$ of the partial Hasse invariants. For~$\fp\subset\mathcal{O}_K$ a prime ideal over~$p$, we refine the polynomial~$\mathrm{no}_{\fp}(t)$ by considering the fibers of the family~$\mathcal{X}_{\bar{\fp}}$ that lie on the divisor~$D_j$
\[
\mathrm{ph}_{\fp,j}(t):=\prod_{\overset{t_0\in\overline{\F}_p}{\mathcal{X}_{\bar{\fp},t_0}\in D_j}}(t-t_0)\,,\quad j=1,\dots,g\,.
\]
For~$j=1,\dots,g$, the~\emph{partial Hasse polynomial} 
\begin{equation}
\label{eq:phpdef}
\mathrm{ph}_{p,j}(t)\:=\bigl(\mathrm{ph}_{\fp,j}(t)\bigr)_{\fp|p}\quad\in \prod_{\fp|p}\overline{\F}_p[t]\,,
\end{equation}
is the polynomial describing the intersection of~$\mathcal{X}_{\bar{\fp}}$ with the divisor~$D_j$ of the partial Hasse invariant~$h_j$. 

The relation between the various polynomials defined above is recorded in the following lemma. Recall that the set~$\bigcap_{j=1}^g{D_j}$ is the superspecial locus in~$\mathcal{M}_{F,s}$.
\begin{lemma}
\label{lem:phpol}
It holds
\[
\mathrm{no}_p(t)=\mathrm{lcm}_{j=1,\dots,g}(\mathrm{ph}_{p,j}(t))\,,\quad \mathrm{sp}_p(t)=\mathrm{gcd}_{j=1,\dots,g}(\mathrm{ph}_{p,j}(t))\,,
\] 
where $\gcd$ and~$\mathrm{lcm}$ are intended component-wise. 
In particular, for~$g=2$ it holds
\[
\mathrm{ss}_p(t)=\begin{cases}
\mathrm{lcm}(\mathrm{ph}_{p,1}(t),\mathrm{ph}_{p,2}(t)) & \text{ if $p$ is inert in $F$}\,,\\
\mathrm{gcd}(\mathrm{ph}_{p,1}(t),\mathrm{ph}_{p,2}(t)) & \text{ if $p$ is split in $F$}\,.
\end{cases}
\]
\end{lemma}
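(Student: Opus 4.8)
The plan is to reduce the entire statement to the set-theoretic structure of the non-ordinary locus recalled in Section~\ref{sec:charp}, exploiting that all the polynomials involved are radical by construction. Indeed, each of $\mathrm{no}_\fp$, $\mathrm{sp}_\fp$, $\mathrm{ss}_\fp$, and $\mathrm{ph}_{\fp,j}$ is defined as a product $\prod(t-t_0)$ ranging over a \emph{set} of points $t_0\in\overline{\F}_p$, so every root occurs with multiplicity one. Consequently, over the algebraically closed field $\overline{\F}_p$, the operations $\mathrm{lcm}$ and $\gcd$ on such polynomials correspond exactly to the union and intersection of their zero sets. Thus the whole lemma becomes a comparison of the supports of these polynomials, and it suffices to prove the corresponding equalities of subsets of $\overline{\F}_p$ for each prime $\fp\mid p$ separately, reassembling the $\fp$-components at the end (the $\gcd$ and $\mathrm{lcm}$ being taken component-wise by definition of the tuples in~\eqref{eq:phpdef}).

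First I would translate properties of a fiber into properties of a point of $\mathcal{M}_{F,s}$. Since $p\notin S$, each $t_0\in\overline{\F}_p$ at which the model~\eqref{eq:modelp} is defined determines an abelian variety $\mathcal{X}_{\bar{\fp},t_0}$ with real multiplication, hence a point of $\mathcal{M}_{F,s}$; whether that fiber is non-ordinary, supersingular, superspecial, or lies on $D_j$ is a property of this point. Hence the zero set of $\mathrm{ph}_{\fp,j}$ is the preimage of $D_j$ under $t_0\mapsto[\mathcal{X}_{\bar\fp,t_0}]$, and similarly for the other polynomials. The theorem of Goren and Andreatta--Goren recalled above says that the non-ordinary locus equals $\bigcup_{j=1}^g D_j$ with $\mathrm{div}(h_j)=D_j$, so a fiber is non-ordinary precisely when $t_0$ lies in some $D_j$; taking unions of zero sets gives $\mathrm{no}_\fp=\mathrm{lcm}_j\,\mathrm{ph}_{\fp,j}$. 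Dually, the recalled identity that $\bigcap_{j=1}^g D_j$ is the superspecial locus shows that a fiber is superspecial precisely when $t_0$ lies in every $D_j$, and intersecting zero sets gives $\mathrm{sp}_\fp=\gcd_j\,\mathrm{ph}_{\fp,j}$. Reassembling over $\fp\mid p$ yields the two displayed equalities.

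For the supersingular polynomial in the case $g=2$ I would invoke the result of Bachmat--Goren recalled at the start of Section~\ref{sec:example}, which describes the supersingular locus of the Hilbert modular surface according to the splitting of $p$. For $p$ inert the supersingular and non-ordinary loci coincide, both equal to $D_1\cup D_2$, whence $\mathrm{ss}_\fp=\mathrm{no}_\fp=\mathrm{lcm}(\mathrm{ph}_{\fp,1},\mathrm{ph}_{\fp,2})$ by the first part. For $p$ split the supersingular locus collapses to the superspecial locus $D_1\cap D_2$, whence $\mathrm{ss}_\fp=\mathrm{sp}_\fp=\gcd(\mathrm{ph}_{\fp,1},\mathrm{ph}_{\fp,2})$ by the second part. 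Assembling the $\fp$-components gives the final display.

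The routine parts are the bookkeeping with radical polynomials and the pullback of loci along $t_0\mapsto[\mathcal{X}_{\bar\fp,t_0}]$; the only genuine input beyond the structure theorem of Section~\ref{sec:charp} is the supersingular dichotomy. I expect the split case to be the delicate point: one must know that a supersingular fiber is forced to be superspecial (equivalently, of $p$-rank zero, equivalently on both divisors), the reverse inclusion being automatic. This is precisely what Bachmat--Goren supplies, after which the identities follow formally.
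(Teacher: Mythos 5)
Your proof is correct and follows the same route as the paper: the first two identities are formal consequences of the radical definitions of the polynomials together with the facts that the non-ordinary locus is $\bigcup_j D_j$ and the superspecial locus is $\bigcap_j D_j$, and the $g=2$ supersingular dichotomy is exactly the Bachmat--Goren result the paper cites. The paper simply treats the first part as immediate and only spells out the appeal to Bachmat--Goren; you have filled in the same bookkeeping explicitly.
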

\begin{proof}
We only need to prove the second statement. It follows from a result of Bachmat-Goren~\cite{G1}, which states that for~$g=2$ the supersingular locus coincides with the non-ordinary locus if~$p$ is inert in~$F$, and that the supersingular locus coincides with the superspecial locus if~$p$ is split in~$F$.
\end{proof}

\subsection{Twisted modular forms}
\label{sec:intro1}
Let~$\phi\colon \Po/\Gamma\simeq Y\hookrightarrow M_F$ be an affine Kobayashi geodesic. The existence of this embedding has two consequences at the level of universal covering spaces: first, the group~$\Gamma$ can be realized as a subgroup of~$\SL(\mathcal{O}_F\oplus\fa)$, for some~$\fa\in Cl(L)^+$, not of finite index if~$g>1$. In particular,~$\Gamma$ is defined over the totally real number field~$F$ and acts on~$\Po^g$ as in~\eqref{eq:action}. 
The second consequence is the existence of a holomorphic map~$\varphi=(\varphi_j)_{j=1}^g\colon\Po\to\Po^g$, called~\emph{modular embedding}, such that for every~$\gamma\in\Gamma$ and~$\tau\in\Po$
\begin{equation}
\label{eq:me}
\varphi_j(\gamma\tau)=\gamma^{\sigma_j}\varphi_j(\tau)\,\quad j=1,\dots,g\,,
\end{equation}
where~$\sigma_1,\dots,\sigma_g\colon F\to\R$ are the real embeddings, which act entry-wise on elements of~$\Gamma$.
It is possible to normalize~$\varphi$ in such a way that~$\varphi_1(\tau)=\tau$, and we adopt this normalization from now on. 

In the world of Fuchsian groups, those admitting a modular embedding are rare, and include as a subclass arithmetic Fuchsian groups and their subgroups. Examples in the case~$g=2$ are the uniformizing groups of modular curves, for which the modular embedding~$\varphi_2$ is simply a Möbius transformation, and the uniformizing group of Weierstraß-Teichmüller curves~\cite{MZ}, which are in general non-arithmetic Fuchsian groups. In the latter case (and in general), the modular embedding is given by a ratio of solutions of a second-order linear differential equation. For general~$g$, non-cocompact triangle groups are examples of non-arithmetic Fuchsian grups admitting a modular embedding (the differential equations involved in this case are of hypergeometric type~\cite{CW},~\cite{BN}).
An important remark is that not every Fuchsian group with totally real trace field admit a modular embedding~\cite{SW}. 

There is a notion of modular forms on the group~$\Gamma$ that takes into account the existence of the modular embedding~$\varphi=(\varphi_j)_{j=1}^g$. A~\emph{twisted modular form} of weight~$(k_1,\dots,k_g)$ associated to the pair~$(\Gamma,\varphi)$ is a holomorphic function~$f\colon\Po\to\C$ such that
\[
f(\gamma\tau)=f(\tau)\prod_{j=1}^g(c^{\sigma_j}\varphi_i(\tau)+d^{\sigma_j})^{k_j}\,,\quad\text{ for every }\gamma=\begin{pmatrix} a & b \\c & d\end{pmatrix}\in\Gamma\text{ and every }\tau\in\Po\,.
\]
Moreover~$f$ must satisfy the following growth requirement at the cusps: the function~$f_g(\tau)=\prod_{j=1}^g(c^{\sigma_j}\varphi_j(\tau)+d^{\sigma_j})^{-k_j}f(g\tau)$ must be bounded as~$\mathrm{Im}(z)\to\infty$ for every element~$g$ of the Hilbert modular group~$\SL(\OO_F\oplus\fa)$. We denote by~$M_{\vec{k}}(\Gamma,\varphi)$ the space of twisted modular forms of weight~$\vec{k}=(k_1,\dots,k_g)$ with respect to the modular embedding~$\varphi$. For fixed~$k$, it is a finite dimensional~$\C$-vector space. 

Twisted modular forms of weight~$(k,0,\dots,0)$ are classical modular forms. The restriction to~$\Gamma$ of a Hilbert modular form~$F(\tau_1,\dots,\tau_g)$ of weight~$(k_1,\dots,k_g)$, i.e., the function defined by~$f(\tau):=F(\varphi_1(\tau),\dots,\varphi_g(\tau))$, is a twisted modular form of weight~$(k_1,\dots,k_g)$. 
An important example of twisted modular form that does not come from classical or Hilbert modular forms is the derivative~$\varphi'_j$ of the~$j$-th component of the modular embedding~$\varphi_j$: it is a twisted modular form of weight~$(2,0,\dots,0,-2,0,\dots,0)$, with~$-2$ in the~$j$-th component; its transformation property is easily deduced from its definition~\eqref{eq:me}, while the boundedness at the cusps has been discussed in Proposition 1.1 of~\cite{MZ} in the case~$g=2$; the proof can be easily adapted to the general case. This example shows in particular the existence of twisted modular forms whose weight is negative in one component. 
The algebraic and differential structure of twisted modular forms have been determined for non co-compact triangle groups in~\cite{BN}; one finds in particular many twisted modular forms that are non-classical nor restrictions of Hilbert modular forms,  which are defined in terms of hypergeometric functions. 

Important numerical invariants associated to the pair~$(\Gamma,\varphi)$ are the~\emph{Lyapunov exponents}~$\lambda_1,\dots,\lambda_g$ defined by the identity
\begin{equation}
\label{eq:Le}
\deg(\mathrm{div}\varphi'_j)\=(\lambda_j-1)\chi(\Po/\Gamma)\,,\quad j=1,\dots,g\,,
\end{equation}
where~$\chi(\Po/\Gamma)$ is the orbifold Euler characteristic of~$\Po/\Gamma$ and~$\mathrm{div}\varphi_j'$ is the divisor of~$\varphi_j'$ in a fundamental domain of~$\Gamma$. This makes sense since~$\varphi_j'$ is a twisted modular form. One sees in particular that~$\lambda_1=1$ since~$\varphi_1(\tau)=\tau$. For equivalent definitions of Lyapunov exponents (in the case~$g=2$) see~ Section 1 of~\cite{MZ}.

Let~$\Gamma$ have genus zero. By studying the Picard-Fuchs differential equations associated to the Kobayashi geodesic $\Po/\Gamma\hookrightarrow\mathcal{M}_F$, one can prove the following result. 
\begin{theorem*}[\cite{B}]
Let~$(\Gamma,\varphi)$ be a non co-compact genus zero Fuchsian group with modular embedding~$\varphi$, and identify~$\Po/\Gamma$ with a punctured sphere~$Y\hookrightarrow\mathcal{M}_F$ defined over a number field~$K$ via the Haputmodul~$t$. Normalize~$t$ to have a zero at the cusp~$i\infty$. 
Let~$\mathcal{X}\to\mathcal{Y}$ be an integral model defined over~$R=\mathcal{O}_K[S^{-1}]$.  Then:
\begin{enumerate}
\item For every~$\vec{k}=(k_1,\dots,k_g)\in\Q\times\Z^{g-1}$, the space~$M_{\vec{k}}(\Gamma,\varphi)$ has a basis of twisted modular forms whose local expansion at~$i\infty$ in the parameter~$t$ has~$p$-integral coefficients for every prime~$p\not\in S$.
\item Let~$p\not\in S$ be a rational prime and $\fp$ a prime above $p$. Let~$N$ be the lowest common multiple of the order of the elliptic points of~$\Gamma$ (set~$N=1$ if $\Gamma$ has no torsion). For every~$j\in\{1,\dots,g\}$ there exists an index~$j'\in\{1,\dots,g\}$, depending on~$j$ and on the splitting of~$p$ in~$F$, and twisted modular forms~$F_j$ and~$F_{j'}$ such that for the~$t$-expansion of~$F_j$ and~$F_{j'}$ at~$i\infty$ it holds:
\begin{equation}
\label{eq:cong}
F_j(t)^N\equiv\alpha_{\fp,j}(t)^{l_jl_{j'}}F_{j'}(t^p)^N\mod \fp\,,
\end{equation}
Here~$\alpha_{\fp,j}(t)\in k_{\fp}[t]$, for~$k_\fp=\mathcal{O}_K/\fp$, is a polynomial with the same zero locus of~$\mathrm{ph}_{p,j}(t)$. The twisted modular forms~$F_j$ and~$F_{j'}$ have weight~$l_j>0$ in the~$j$-th component and weight~$l_{j'}$ in the~$j'$-th component respectively, and weight~$0$ else.  
\end{enumerate}
\end{theorem*}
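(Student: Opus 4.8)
The plan is to reduce the whole statement to the arithmetic of the Picard--Fuchs differential equations attached to the family $\mathcal{X}\to\mathcal{Y}$ and to the behaviour of their local solutions at the cusp $t=0$. The real multiplication by $\OO_F$ splits the relative de Rham cohomology $\mathcal{H}=H^1_{dR}(\mathcal{X}/\mathcal{Y})$, after a finite base extension that can be absorbed into $R$, into $g$ rank-two pieces $\mathcal{H}=\bigoplus_{j}\mathcal{H}_j$, one for each embedding $\sigma_j\colon F\to\R$. On each $\mathcal{H}_j$ the Gauss--Manin connection $\nabla$, written in a basis adapted to the Hodge line $\omega_j\subset\mathcal{H}_j$, produces a second-order operator $L_j$ with coefficients in $R[t,\prod_i(t-t_i)^{-1}]$, and the ratio of two of its solutions is exactly the $j$-th component $\varphi_j$ of the modular embedding. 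I would take as building blocks the holomorphic period $\theta_j(t)$, the solution of $L_j$ at the cusp normalised by $\theta_j(0)=1$, regarded through the Hauptmodul as a power series in $t$. A twisted modular form of weight $\vec{k}$ can then be written as
\[
f(\tau)=\prod_{j=1}^g\theta_j(t)^{k_j}\cdot\Phi(t),
\]
with $\Phi$ a rational function of $t$, and $M_{\vec{k}}(\Gamma,\varphi)$ is identified with the finite-dimensional space of admissible $\Phi$ cut out by the pole and zero conditions at the cusps and the elliptic points.

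For Part~1 I would first show that $\theta_j(t)$ has $p$-integral coefficients for every $p\notin S$. Since $L_j$ is defined over $R$ and $t=0$ is a regular singular point, the indicial equation together with the normalisation $\theta_j(0)=1$ gives a recursion for the Taylor coefficients whose only possible denominators are products of differences of local exponents; for $p\notin S$ these are $p$-adic units, so every coefficient is $p$-integral. Because $\theta_j(0)=1$ is a unit, the factor $\prod_j\theta_j(t)^{k_j}$ is a $p$-integral power series with unit constant term, and it remains so even for the rational first weight once one notes that the normalisation $\varphi_1=\mathrm{id}$ makes the first component the uniformising equation of $\Po/\Gamma$, so that the fractional power is taken of a unit series whose exponent denominator divides the orders of the elliptic points. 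Choosing a $p$-integral basis of the Riemann--Roch space of admissible $\Phi$ on $\pro^1$ and multiplying by this period factor then yields the desired basis.

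Part~2 is the arithmetic heart, and I would obtain it from a Dwork--Cartier congruence for the reductions of the $L_j$ modulo $\fp$. The Frobenius on the crystalline cohomology of $\mathcal{X}_{\bar{\fp}}$ respects the $\OO_F$-action and therefore carries $\mathcal{H}_j$ to $\mathcal{H}_{j'}$, where the index $j'$ is determined by the action of $\mathrm{Frob}$ on the embeddings $\sigma_j$, hence by the splitting type of $p$ in $F$ (a nontrivial permutation in the inert case, the identity in the split case). Comparing the holomorphic solution of $L_j$ with the Frobenius pull-back of that of $L_{j'}$, which at the level of the Hauptmodul is the substitution $t\mapsto t^p$, gives a congruence of the shape $\theta_j(t)\equiv\alpha_{\fp,j}(t)\,\theta_{j'}(t^p)\bmod\fp$, in which $\alpha_{\fp,j}(t)\in k_{\fp}[t]$ is the unit root of Frobenius on the $j$-th eigenline. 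Its vanishing is exactly the failure of ordinarity in the $j$-th direction, so its zero locus coincides with that of $\mathrm{ph}_{p,j}(t)$. Raising this to the power $N$, to clear the fractional local exponents forced by the elliptic points, and to the weight exponent dictated by the passage from periods to genuine forms $F_j,F_{j'}$, then produces the stated relation $F_j(t)^N\equiv\alpha_{\fp,j}(t)^{l_jl_{j'}}F_{j'}(t^p)^N\bmod\fp$.

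The main obstacle will be Part~2, and within it two points. The first is to pin down the Frobenius twist $j\mapsto j'$ and to prove that the unit-root factor $\alpha_{\fp,j}$ really has the same zero locus as the partial Hasse polynomial $\mathrm{ph}_{p,j}$; that is, that the locus where the mod-$\fp$ Picard--Fuchs solution degenerates coincides with the intersection $\mathcal{X}_{\bar{\fp}}\cap D_j$, which requires matching the Cartier operator on $\mathcal{H}_j$ with Goren's description of the divisor $D_j$ of the $j$-th partial Hasse invariant. The second is the bookkeeping at the elliptic points: the fractional exponents of $L_j$ there mean that $\theta_j$ and the forms $F_j$ are only genuinely $p$-integral and polynomial after passing to the $N$-th power, and one has to check that the congruence survives this operation and that the exponent $l_jl_{j'}$ is the correct one after balancing the weights on the two sides. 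Everything else --- the regular-singular recursion, the identification of $M_{\vec{k}}$ with a space of rational functions on $\pro^1$, and the normalisations at the cusp --- is routine once this core is in place.
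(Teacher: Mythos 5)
This statement is quoted verbatim from the external reference~\cite{B}; the paper you are reading contains no proof of it, only a description of the ingredients (integrality of solutions of the Picard--Fuchs equations, the normalisation in which all finite regular singular points have a local exponent $0$, the identification of the holomorphic solutions with $l_j$-th roots of the forms $F_j$, and the degree formula~\eqref{eq:degapj}). Judged against that description, your sketch follows essentially the intended route: RM-splitting of $H^1_{dR}$ into rank-two pieces, second-order operators $L_j$ over $R$, $p$-integrality of the normalised holomorphic solution at the cusp via the regular-singular recursion, and a Dwork--Cartier congruence in which Frobenius permutes the embeddings (giving $j\mapsto j'$) and the unit-root factor cuts out the non-ordinary divisor $D_j$. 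Your identification of the obstacles --- matching the Cartier operator with Goren's $D_j$, and the bookkeeping at elliptic points --- is also the right one.

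One point in your sketch is too loose to pass as written. You propose the base congruence $\theta_j(t)\equiv\alpha_{\fp,j}(t)\,\theta_{j'}(t^p)\bmod\fp$ and then say you will raise it ``to the power $N$ and to the weight exponent.'' But the target is $F_j(t)^N\equiv\alpha_{\fp,j}(t)^{l_jl_{j'}}F_{j'}(t^p)^N$ with $F_j=\theta_j^{l_j}$ and $F_{j'}=\theta_{j'}^{l_{j'}}$, and in general $l_j\neq l_{j'}$ (cf.\ the weights $(-1,p)$ versus $(p,-1)$ in the inert quadratic case). No single power of your proposed congruence produces unequal exponents $Nl_j$ and $Nl_{j'}$ on the two sides together with the exponent $l_jl_{j'}$ on $\alpha_{\fp,j}$; the degree count \eqref{eq:degapj}, $\deg\alpha_{\fp,j}=\tfrac{-\chi(Y)}{2}N(p\lambda_{j'}-\lambda_j)$, only balances against the divisor degrees of $\theta_j^{Nl_j}$ and $\theta_{j'}(t^p)^{Nl_{j'}}$ when the weights enter asymmetrically. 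So the congruence you need at the level of periods must already carry the exponents $l_j,l_{j'}$ (e.g.\ a statement of the form $\theta_j(t)^{Nl_j}\equiv\alpha_{\fp,j}(t)^{l_jl_{j'}}\theta_{j'}(t^p)^{Nl_{j'}}$ coming from comparing the two Hodge lines under Frobenius, not just the two normalised solutions), and this is precisely where the content of~\cite{B} lies. As a reconstruction of a cited result the sketch is a reasonable roadmap, but this weight-balancing step is a genuine gap, not mere bookkeeping.
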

\noindent
An example of the relation between the indices~$j$ and~$j'$ is given in Section~\ref{sec:example} in the case of real quadratic fields; the relation is completely described in Section~$1.5$ of~\cite{B}.
Points 1 and 2 of the theorem hold more generally for twisted modular forms with non-trivial multiplier system. The choice of cusp for the~$t$-expansion is also arbitrary, the statements hold at any cusp~$c$ for a suitable choice of Hauptmodul~$t$. 

The theorem implies in particular that the partial Hasse polynomials $\mathrm{ph}_{\fp,j}(t)$, which are defined over the field~$\overline{\F}_p$, can be described by the polynomials $\alpha_{\fp,j}(t) \in k_{\fp}[t]$.

The polynomials~$\alpha_{\fp,j}(t)$ are related to modulo~$\fp$ solutions of the Picard-Fuchs differential equations of~$Y$ and can be explicitly described in terms of the non-ordinary locus as follows (see~\cite{B} for details).
Let~$E_j\subset\C$ be the image, via the Hauptmodul~$t\colon\Po/\Gamma\to\C$, of the set
\[
\{\tau_0\in\Po\;|\; \tau_0\text{ is an elliptic point or }\varphi'_{j}(\tau_0)=0\}\,.
\]
The set~$E_j$ is finite and defined over~$R$ (it can be interpreted as a subset of the set of singular points of the Picard-Fuchs equations of~$Y$, which are defined over~$R$; see Section 1 of~\cite{B} for details on Picard-Fuchs differential equations).

Let~$(E_{j})_{\fp}$ denote the image of the set $E_{j}$ after reduction modulo~$\fp$. 
Let~$\mathcal{X}_{\bar{\fp}}$ be as in~\eqref{eq:modelp} and 
for~$j=1,\dots,g$ define the polynomial~$\mathrm{ph}_{p,j}^*(t)\in \prod_{\fp|p}\overline{\F}_p[t]$ by
\begin{equation}
\label{eq:polref}
\mathrm{ph}_{\fp,j}^*(t)\:=\prod_{\overset{t_0\in\overline{\F}_p\setminus (E_j)_\fp}{\mathcal{X}_{\bar{\fp},t_{0}}\in D_{j}}}(t-t_0)\,,\quad\mathrm{ph}_{p,j}^*(t):=\bigl(\mathrm{ph}_{\fp,j}^*(t)\bigr)_{\fp|p}\,.
\end{equation}
This polynomial is a refinement of the polynomial~$\mathrm{ph}_{p,j}(t)$ in~\eqref{eq:phpdef} and describes the intersection~$\mathcal{X}_{\bar{\fp}}\cap D_j$ outside the elliptic points of~$Y$ and the divisor of~$\varphi'_j$. 

For~$t_0\in E_j$ denote by~$(t_0)_\fp$ its modulo~$\fp$ reduction, and let~$n_0\in\Z_{\ge1}$ be the order of the stabilizer in~$\Gamma$ of its preimage~$\tau_0:=t^{-1}(t_0)\in\Po$. One has~$n_0>1$ only if~$\tau_0$ is an elliptic point of~$\Gamma$. Let~$N$ be the lowest common multiple of the orders of elliptic points of~$\Gamma$ and set~$N=1$ if there are no elliptic points. Then it holds (Corollary~2 in~\cite{B})
\begin{equation}
\label{eq:ajp}
\alpha_{p,j}(t):=\bigl(\alpha_{\fp,j}(t)\bigr)_{\fp|p}\= \Bigl(\mathrm{ph}_{\fp,j}^*(t)^N\prod_{t_0\in E_j}{\bigl((t_0)_\fp-t\bigr)^{N\cdot\epsilon^\fp_{j,0}/n_0}}\Bigl)_{\fp|p}\in (R\otimes\F_{p})[t]\,,
\end{equation}
where~$\epsilon^\fp_{j,0}\in\{0,1+\mathrm{ord}_{\tau_0}\varphi_j'\}$ satisfies~$\epsilon^\fp_{j,0}\neq 0$ only if~$\mathcal{X}_{\bar{\fp},(t_0)_\fp}$ is a non-ordinary abelian variety. The form of the exponent~$\epsilon^\fp_{j,0}$ also comes from the theory of differential equations.

Finally, Theorem~3 of~\cite{B} states that for every $\fp|p$ the degree of~$\alpha_{\fp,j}$ is 
\begin{equation}
\label{eq:degapj}
\deg(\alpha_{\fp,j}(t))=\frac{-\chi(Y)}{2}\cdot N\cdot(p\lambda_{j'}-\lambda_j)\,,
\end{equation}
where~$j$ and~$j'$ are related as in the Theorem. 

\section{Twisted modular forms modulo~$p$}
\subsection{Twisted modular forms on genus zero groups}
\label{sec:genus0}
We start by describing the ring of modular forms on genus zero Fuchsian groups. 
\begin{lemma}
\label{lem:D}
Let~$\Gamma$ be a non co-compact genus zero Fuchsian group, and let~$i\infty$ be a cusp. There exists a modular form~$\Delta=\Delta_{i\infty}$ with non-trivial multiplier system~$v_\Delta$ satisfying
\begin{equation}
\label{eq:Delta}
\mathrm{weight}(\Delta)\=\frac{-2}{\chi(\Po/\Gamma)}\,,\quad \mathrm{div}(\Delta)=1\cdot i\infty\,,
\end{equation}
where~$\chi(\Po/\Gamma)$ denotes the orbifold Euler characteristic. 
Moreover, if~$\Po/\Gamma$ admits a modular embedding defined over~$\mathcal{O}_K[S^{-1}]$, the modular form~$\Delta$ can be chosen to have~$p$-integral~$t$-expansion at the cusps, for almost every prime~$p\not\in S$. 
\end{lemma}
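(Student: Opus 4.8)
The plan is to construct $\Delta$ explicitly as a carefully chosen power/product built from the canonical bundle of the orbifold $\Po/\Gamma$, exploiting the genus-zero hypothesis so that everything can be written in terms of the single Hauptmodul $t$. First I would recall the standard dictionary between meromorphic modular forms of a given weight on $\Gamma$ and meromorphic sections of the corresponding power of the line bundle $\omega$ of modular forms on the compactified quotient $\overline{\Po/\Gamma}\simeq\pro^1$. Under this dictionary, the weight of a form is proportional to the degree of the associated divisor class, and the constant of proportionality is $-\chi(\Po/\Gamma)/2$: indeed, a weight-$2$ (meromorphic) form is $df$ for a modular function $f$, and the Gauss--Bonnet/Riemann--Hurwitz computation on the orbifold gives $\deg(\omega^{\otimes 2})=-\chi(\Po/\Gamma)$ once the elliptic and cuspidal contributions are accounted for. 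This fixes the target weight in \eqref{eq:Delta}: a form whose divisor is exactly $1\cdot i\infty$ (a single simple zero at the chosen cusp and no other zeros or poles) must have weight $-2/\chi(\Po/\Gamma)$, since degree $1$ corresponds to that weight under the proportionality.

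Next I would actually produce such a form. Because $\Gamma$ has genus zero, I can take $t$ to be a Hauptmodul with $t(i\infty)=0$, and then $dt$ is a meromorphic weight-$2$ form (strictly, a twisted/weight-$2$ object) whose divisor I can read off from the ramification data of $t\colon\overline{\Po/\Gamma}\to\pro^1$. Allowing a nontrivial multiplier system $v_\Delta$ is exactly what lets me take fractional powers: I would form a suitable root of a product of $dt$ with factors $(t-t_c)$ running over the other cusps and factors capturing the elliptic points, arranged so that all the unwanted zeros and poles cancel and only a simple zero at $i\infty$ survives. The multiplier system is the price paid for the fractional exponents forced by the elliptic points of order $n$ and by the cusp widths; concretely $v_\Delta$ is determined by how these rational powers transform under the generators of $\Gamma$. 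Verifying that the resulting object is genuinely holomorphic on $\Po$ (no poles in the interior, correct boundedness at every cusp in the twisted sense of Section~\ref{sec:intro1}) is the routine but necessary check, and it pins down $\Delta$ up to a scalar.

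For the integrality statement I would invoke Part~1 of the theorem of~\cite{B} recalled above: since $\Po/\Gamma$ admits a modular embedding defined over $\mathcal{O}_K[S^{-1}]$, every space $M_{\vec k}(\Gamma,\varphi)$ has a basis with $p$-integral $t$-expansions at $i\infty$ for almost every $p\notin S$. The form $\Delta$ has weight lying in $\Q$ in the first component and $0$ elsewhere, so (possibly after enlarging $S$ by the finitely many primes dividing the denominators introduced by the fractional exponents and by the values $t_c$, $t(e)$ at the other cusps and elliptic points) it lies in such a space and can be normalized to have $p$-integral expansion at $i\infty$; the expansions at the other cusps are then $p$-integral for almost all $p$ because they differ from the $i\infty$-expansion by the action of finitely many elements of the Hilbert modular group, whose entries and the relevant evaluation points lie in $R=\mathcal{O}_K[S^{-1}]$.

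The main obstacle I anticipate is not the existence of $\Delta$ but the \emph{simultaneous} control of the divisor condition $\mathrm{div}(\Delta)=1\cdot i\infty$ and the multiplier system in the presence of elliptic points: the fractional exponents needed to kill the elliptic contributions must be compatible with a single well-defined (if nontrivial) character $v_\Delta$ on $\Gamma$, and one must check that the exponent at $i\infty$ comes out to be exactly $1$ and integral rather than fractional. Reconciling the Riemann--Hurwitz bookkeeping (which naturally produces rational divisor coefficients at elliptic points) with the requirement that the cusp order be a genuine simple zero is the delicate point; I expect it to go through precisely because the weight has been chosen as $-2/\chi(\Po/\Gamma)$, but this compatibility is what the proof must verify rather than assume.
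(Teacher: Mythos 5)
Your plan matches the paper's proof: the paper likewise builds $\Delta$ explicitly as a fractional root of a product of $t'/t$ with linear factors $(t-t(c_i))$ at the other cusps and $(t-t(\tau_l))$ at the elliptic points (via the auxiliary forms $Q_1,\dots,Q_r$, with $\Delta\cdot t^{-1}=Q_1^{n_1}$ in the torsion case), and derives the $p$-integrality of the $t$-expansion from Part~1 of the theorem of~\cite{B}. The ``delicate point'' you flag about reconciling the fractional exponents at elliptic points with a genuine simple zero at $i\infty$ is exactly what the paper's explicit exponents in~\eqref{eq:Qid} and the relations~\eqref{eq:relmf} are designed to settle, so your approach is essentially the same.
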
 
\begin{proof}
Let~$t$ be a Hauptmodul for~$\Gamma$. The modular form~$\Delta$ can be constructed as an algebraic function of~$t$ and its derivative~$t'$ as shown below. 

Assume first that~$\Gamma$ has no elliptic points, and let~$c_1=i\infty,c_2,\dots,c_s$ be the cusps of~$\Gamma$. Note that~$s\ge3$, otherwise the Euler characteristic of~$\Po/\Gamma$ would be non-negative. Normalize the Hauptmodul~$t$ to have a pole at~$i\infty$, and a zero at the cusp~$c_2$. Then~$\frac{t'}{t}\prod_{i=3}^s{(t-t(c_i))^{-1}}$ is a weight-two modular form with a zero of order~$s-3=-\chi(\Po/\Gamma)$ at the cusp~$i\infty$, and no other zeros or poles on~$\overline{\Po}$. Then any~$(s-3)$-th root of this weight-two modular form gives a possible~$\Delta$. 

Let now~$\Gamma$ have~$r>0$ elliptic points~$e_1,\dots,e_r$, of order~$n_1,\dots,n_r$ respectively, and cusps~$c_1=i\infty,c_2,\dots,c_s$. Let~$N=\mathrm{lcm}(n_1,\dots,n_r)$. Normalize the Hauptmodul~$t$ to have the zero at~$i\infty$, and the pole at~$e_1$.
We construct modular forms~$Q_1,\dots,Q_r$ with non-trivial multiplier system~$v_{Q_1},\dots,v_{Q_r}$ as follows:
\begin{equation}
\label{eq:Qid}
\begin{aligned}
Q_1&\:=\left[\Bigl(\frac{t'}{t}\Bigr)^N\prod_{l=2}^r{\bigl(t-t(\tau_l)\bigr)^{-N(1-1/n_l)}}\prod_{m=2}^s\bigl(t-t(c_m)\bigr)^{-N}\right]^{\frac{1}{N\cdot n_1\cdot(-\chi(\Po/\Gamma))}}\\
Q_i&\:=\left[\Bigl(\frac{t'}{t}\Bigr)^N\bigl(t-t(\tau_i)\bigr)^{N\cdot(-\chi(\Po/\Gamma))}\prod_{l=2}^r{\bigl(t-t(\tau_l)\bigr)^{-N(1-1/n_l)}}\prod_{m=2}^s\bigl(t-t(c_m)\bigr)^{-N}\right]^{\frac{1}{N\cdot n_i\cdot(-\chi(\Po/\Gamma))}}\;i\ge2\,.
\end{aligned}
\end{equation}
If follows from the construction that~$Q_i$ satisfies
\begin{equation}
\label{eq:Qi}
\mathrm{weight}(Q_i)\=\frac{-2}{\chi(\Po/\Gamma)\cdot n_1}\,,\quad\mathrm{div}(Q_i)\=\frac{1}{n_1}\cdot e_i\,,
\end{equation}
and
\begin{equation}
\label{eq:relmf}
Q_i^{n_i}\=(t(e_i)-t)Q_1^{n_1}\,,
\end{equation}
for every~$i=1,\dots,r$. 
In this case, $\Delta$ is defined by~$\Delta\cdot t^{-1}=Q_1^{n_1}$.

The claim on the~$p$-integrality of the~$t$-expansion follows from the first part of theorem in Section~\ref{sec:intro1}.
\end{proof}

Consider now the case of a genus zero Fuchsian group~$\Gamma$ with modular embedding~$\varphi=(\varphi_j)_j\colon\Po\to\Po^g$, whose associated curve~$\Po/\Gamma$ has an integral model~$\mathcal{X}\to\mathcal{Y}$ over~$\mathcal{O}_K[S^{-1}]$ as in Section~\ref{sec:intro1}. One could construct twisted modular forms of prescribed divisor by considering~$t\cdot(\varphi_j')^{-1}$ instead of~$t'$ and arranging a bit the expressions in~\eqref{eq:Qid}, in order to take into account the zeros of~$\varphi_j'$. However, we do something different and more effective. 

\begin{proposition} 
\label{prop:Bj}
Let~$\Gamma$ be a non-cocompact genus zero Fuchsian group with modular embedding~$\varphi=(\varphi_j)_j\colon\Po\to\Po^g$. For every~$j=2,\dots,g$ there exists a twisted modular form~$B_j$, in general with non-trivial multiplier system, with the following properties:
\begin{itemize}
\item $B_j$ has no zeros nor poles on~$\overline{\Po}$;
\item the weight of~$B_j$ is~$(-\lambda_j,0,\dots,0,1,0,\dots,0)$, where~$1$ is in the~$j$-th position, and~$\lambda_j$ is as in~\eqref{eq:Le}.
\end{itemize}
\end{proposition}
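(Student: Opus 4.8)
We want a twisted modular form $B_j$ (for each $j=2,\dots,g$) of weight $(-\lambda_j,0,\dots,0,1,0,\dots,0)$ that is nowhere-vanishing and pole-free on $\overline{\Po}$. The key given ingredient is $\varphi_j'$, a twisted modular form of weight $(2,0,\dots,0,-2,0,\dots,0)$ with $-2$ in the $j$-th slot, together with the divisor identity $\deg(\operatorname{div}\varphi_j')=(\lambda_j-1)\chi(\Po/\Gamma)$ from \eqref{eq:Le}. The natural strategy is to cancel the zeros/poles of $\varphi_j'$ against a suitable power of $\Delta$ from Lemma~\ref{lem:D} and a product of $t$-factors, then take a root to adjust the weight.

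**Plan.** First I would consider $(\varphi_j')^{-1}$, which has weight $(-2,0,\dots,0,2,0,\dots,0)$ and whose divisor on $\overline{\Po}$ is supported on the zeros of $\varphi_j'$ (the points counted by the Lyapunov exponent) plus the cusp contributions. The idea is to multiply $(\varphi_j')^{-1/2}$ (or an appropriate root) by an algebraic function of $t$ chosen to kill the resulting divisor entirely. Concretely, writing $D_j:=\operatorname{div}\varphi_j'$, I would build a rational function $\rho(t)=\prod_\ell (t-t(\tau_\ell))^{m_\ell}$ whose associated divisor, combined with that of a power of $\Delta$, exactly cancels $\tfrac12 D_j$ while producing the weight $(-\lambda_j,0,\dots,0,1,0,\dots,0)$. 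Since $\Delta$ has weight $-2/\chi(\Po/\Gamma)$ in the first component and divisor $1\cdot i\infty$, and $-\chi(\Po/\Gamma)$ will appear in the denominators of the exponents (as in the construction of the $Q_i$ in \eqref{eq:Qid}), the bookkeeping parallels that of Lemma~\ref{lem:D} but now weighted by the Lyapunov data.

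**Execution in steps.** The concrete construction I would carry out is: set $B_j$ to be a $\big(\chi(\Po/\Gamma)\big)$-th (or $N\cdot(-\chi)$-th, to clear elliptic denominators) root of an explicit product
\[
(\varphi_j')^{a}\,\Delta^{b}\,\prod_{\ell}\bigl(t-t(\tau_\ell)\bigr)^{c_\ell},
\]
where the exponents $a,b,c_\ell$ are determined by two linear constraints: the weight vector must equal $(-\lambda_j,0,\dots,0,1,0,\dots,0)$, forcing the $j$-th component to come solely from the power of $\varphi_j'$ (so the $j$-th weight is $-2a=1$, i.e. $a=-\tfrac12$), and the total divisor on $\overline{\Po}$ must vanish. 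The divisor constraint is solved point-by-point: at each zero $\tau_\ell$ of $\varphi_j'$ one chooses $c_\ell$ to offset the order $a\cdot\operatorname{ord}_{\tau_\ell}\varphi_j'$, at elliptic points one uses the $Q_i$-type factors to account for the fractional orders $1-1/n_\ell$, and at the cusps one uses the power $b$ of $\Delta$ to absorb the remaining divisor, with $\deg(\operatorname{div}\varphi_j')=(\lambda_j-1)\chi(\Po/\Gamma)$ ensuring the total cusp-degree balances and forces precisely the first-component weight $-\lambda_j$. The $p$-integrality of the $t$-expansion is inherited from $\Delta$ and from part~(1) of the theorem of \cite{B}.

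**Main obstacle.** The delicate point is not the weight---that is pinned down immediately by $a=-\tfrac12$---but verifying that the divisor can be made \emph{identically} zero on all of $\overline{\Po}$, including a consistent treatment of the half-integer power of $\varphi_j'$ and the fractional contributions at elliptic points. I expect the real work to lie in checking that the global degree identity \eqref{eq:Le} makes the cusp exponent $b$ come out to exactly the value producing first-weight $-\lambda_j$, and in confirming that the chosen root exists as a single-valued function with a well-defined (possibly non-trivial) multiplier system, rather than only as a multivalued expression; this is analogous to the root-extraction step in Lemma~\ref{lem:D} and should go through by the same genus-zero, simply-connected-complement reasoning, but it is where one must be careful that all exponents entering $B_j^{N(-\chi)}$ are integers.
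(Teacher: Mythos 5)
Your proposal is correct and follows essentially the same route as the paper: the paper takes a classical modular form $s_j$ with $\mathrm{div}(s_j)=\mathrm{div}(\varphi_j')$ (built, exactly as you describe, from $\Delta$, $t$-factors and the $Q_i$) and sets $B_j=\sqrt{s_j\cdot(\varphi_j')^{-1}}$, with the first-component weight $-\lambda_j$ falling out of \eqref{eq:Le} just as in your computation. Your explicit undetermined-exponents bookkeeping is simply an unpacked version of the paper's one-line invocation of the genus-zero structure from Lemma~\ref{lem:D}.
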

\begin{proof}
  The twisted modular forms in the statement can be constructed as follows. Let~$j\in\{2,\dots,g\}$ and let~$s_j$ be a (classical) modular form on~$\Gamma$ such that~$\mathrm{div}(s_j)=\mathrm{div}(\varphi_j')$. The modular form~$s_j$ is given by a polynomial in~$Q_1,\dots,Q_r$ if~$\Gamma$ has elliptic points, or as a polynomial in~$t$ times a power of~$\Delta$ if~$\Gamma$ is torsion free. It follows that the twisted modular form~$s_j\cdot(\varphi_j')^{-1}$ has no roots nor poles in~$\overline{\Po}$. The same is true for the twisted modular form
\[B_j\:=\sqrt{s_j\cdot(\varphi'_j)^{-1}}\,,
\] 
where we consider the principal branch of the square root.

The classical relation between the degree of the divisor of a modular form and its weight gives~$\mathrm{weight}(s_j)=\frac{-2}{\chi(\Po/\Gamma)}\cdot\deg(\mathrm{div}(s_j))=\frac{-2}{\chi(\Po/\Gamma)}\cdot\deg(\mathrm{div}(\varphi'_j))=2(1-\lambda_j)$ by~\eqref{eq:Le}. Since the weight of~$\varphi_j'$ is~$(2,0,\dots,0,-2,0,\dots,0)$, where~$-2$ is in the~$j$-th position, the proposition follows. 
\end{proof}

One should think of multiplication by powers of~$B_j$, for different values of~$j$, as a way to reduce a twisted modular form to a classical modular form without changing its divisor (but the weight and the multiplier systems change). Similarly, by multiplying by powers of the~$B_j$, one gets twisted modular forms from classical modular forms. It is simple to prove that every twisted modular form, on a genus zero group, can be constructed in this way. 

\subsection{Lifts of powers of partial Hasse invariants}
\label{sec:phi}
Let~$\Gamma$ be a non-cocompact genus zero Fuchsian group with modular embedding~$\varphi\colon\Po\to\Po^g$, and identify~$\Po/\Gamma$ with a punctured sphere~$Y\hookrightarrow {M}_F$, defined over a number field~$K$, via a Hauptmodul~$t$.
We normalize~$t$ to have a zero at~$i\infty$ and a pole at a cusp, or at an elliptic point if~$\Gamma$ has only one cusp. This normalization is always possible because the orbifold Euler characteristic of~$Y$ must be negative. A negative Euler characteristic implies that~$\Gamma$ has at least three cusps if it is torsion-free; otherwise, if~$\Gamma$ has fewer than three cusps, it must contain elliptic points. Furthermore, the assumption of non-cocompactness ensures that there is always at least one cusp. 

For~$p\not\in S$ a prime number, let~$\alpha_{p,j}(t)\in(R\otimes\F_p)[t]$ be the polynomial in~\eqref{eq:ajp}, and let~$\widetilde{\alpha}_{p,j}\in\mathcal{O}_K[S^{-1}][t]$ be a lift of~$\alpha_{p,j}$ of degree~$d_{p,j}=\deg{\widetilde{\alpha}_{p,j}(t)}=\deg{\alpha_{p,j}(t)}$. 
Finally, for every~$j\in\{1,\dots,g\}$ let~$j'\in\{1,\dots,g\}$ be the unique index associated to~$j$ by the splitting behavior of~$p$ in the totally real field~$F$ as in Part 3 of the theorem in Section~\ref{sec:intro1}. Let~$N\in\Z_{\ge1}$ be as in~\eqref{eq:ajp}.

For~$j=1,\dots,g$ define the twisted modular form~$h^N_{p,j}(\tau)\in M_*(\Gamma,\varphi)$ by
\begin{equation}
\label{eq:hjp}
h^N_{p,j}(\tau)\:=\Bigl(\frac{\Delta}{t}\Bigr)^{d_{p,j}}B_j^{-N}B_{j'}^{p\cdot N}\cdot\widetilde{\alpha}_{p,j}(t)\,.
\end{equation}
The definition of~$h_{p,j}^N(\tau)$ depends on the choice of lift~$\widetilde{\alpha}_{p,j}(t)$, but not the properties we are interested in. 

\begin{theorem}
\label{thm:phi}
Let~$p\not\in S$ be an odd prime. For~$j=1,\dots,g$ the twisted modular form~$h^N_{p,j}(\tau)$ is holomorphic of (non-parallel) weight~$N\cdot(p-1)$, and of trivial multiplier system. Moreover~$h^N_{p,1}(\tau),\dots, h^N_{p,g}(\tau)$ satisfy:
\begin{enumerate}
\item The zero locus of~$h^N_{p,j}$ is described by a polynomial in~$t$ with~$p$-integral coefficients and such that its reduction modulo~$p$ has the same zero locus of~$\mathrm{ph}_{p,j}(t)$.
\item The~$t$-expansion of~$h^N_{p,j}(\tau)$ at every cusp is constant modulo~$p$. 
\end{enumerate}
\end{theorem}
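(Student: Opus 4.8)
The plan is to dispatch the structural claims (weight, holomorphy, multiplier) by weight and divisor bookkeeping, read off Part~1 from the divisor, and reduce Part~2 to the congruence \eqref{eq:cong}, which I expect to be the heart of the matter. For the weight I would add the weights of the four factors of \eqref{eq:hjp}: by Lemma~\ref{lem:D} the factor $(\Delta/t)^{d_{p,j}}$ contributes $(d_{p,j}\cdot(-2/\chi(\Po/\Gamma)),0,\dots,0)$, by Proposition~\ref{prop:Bj} the factors $B_j^{-N}$ and $B_{j'}^{pN}$ contribute $(N\lambda_j,\dots,-N,\dots)$ and $(-pN\lambda_{j'},\dots,pN,\dots)$ (with the $\mp N,pN$ entries in positions $j$ and $j'$), and $\widetilde\alpha_{p,j}(t)$ has weight zero. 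Inserting the degree identity \eqref{eq:degapj}, which gives $d_{p,j}=(-\chi/2)\,N(p\lambda_{j'}-\lambda_j)$, the first-coordinate $\lambda$-terms cancel and the total weight becomes non-parallel with coordinate sum $N(p-1)$; in particular for $g=2$ one recovers the weights $(-N,pN)$ and $(pN,-N)$ of the introduction, using $\lambda_1=1$ and that $B_1$ is constant when $j$ or $j'$ equals $1$. For holomorphy I would compute $\mathrm{div}(h_{p,j}^N)$: the forms $B_j,B_{j'}$ have empty divisor; $\Delta/t$ has divisor $1\cdot c$, where $c$ is the pole of $t$; and $\widetilde\alpha_{p,j}(t)$ has a pole of order $d_{p,j}$ at $c$ together with zeros at the $t$-preimages of its roots. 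The pole at $c$ cancels the zero of $(\Delta/t)^{d_{p,j}}$, so $\mathrm{div}(h_{p,j}^N)$ is effective and supported exactly on the preimages of the roots of $\widetilde\alpha_{p,j}$.

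For the multiplier system I would note that the rational factor $t^{-d_{p,j}}\widetilde\alpha_{p,j}(t)$ is trivial, so it suffices to show $v_\Delta^{d_{p,j}}v_{B_j}^{-N}v_{B_{j'}}^{pN}=1$. Each character is of finite order, coming from the fractional powers in \eqref{eq:Delta}, \eqref{eq:Qid} and from the square root defining $B_j$; I would track these orders and use that $N$ divides all three exponents ($d_{p,j}$ by \eqref{eq:degapj}) to kill the $N$-torsion, while the hypothesis $p$ odd clears the order-two ambiguity of the square root, since $p-1$ (split case $j'=j$) or the relevant parity (inert case) is then even. Checking triviality on the parabolic and elliptic generators of $\Gamma$ completes this point, and Part~1 follows at once: the zero locus of $h_{p,j}^N$ is cut out by $\widetilde\alpha_{p,j}(t)$, whose reduction $\alpha_{p,j}(t)$ has, by the description following \eqref{eq:ajp}, the same zero locus as $\mathrm{ph}_{p,j}(t)$.

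Part~2 is the main obstacle, and the strategy is to realize $h_{p,j}^N$ modulo $p$ as $\widetilde\alpha_{p,j}(t)$ times a ratio that \eqref{eq:cong} collapses. First I would invoke Part~1 of the theorem recalled in Section~\ref{sec:intro1} to guarantee $p$-integral $t$-expansions for $\Delta,B_j,B_{j'}$ and their inverses — the inverses being legitimate because $B_j$ is non-vanishing at $i\infty$, so for $p\notin S$ its constant term is a $p$-unit — hence for $h_{p,j}^N$ itself. Next, raising a $p$-integral $t$-series to the $p$-th power is, modulo $p$, the coefficient-Frobenius twist evaluated at $t^p$; thus $B_{j'}^{pN}$ becomes the Frobenius twist of $B_{j'}^{N}$ at $t^p$, matching the shape of $F_{j'}(t^p)^N$ in \eqref{eq:cong} (this is where the index $j'$ tracks the splitting of $p$). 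Correcting the first-coordinate weights of $B_j,B_{j'}$ by the powers of $\Delta$ and $t$ already present in $(\Delta/t)^{d_{p,j}}$ identifies $B_j^{-N}B_{j'}^{pN}$, up to a constant, with $F_j(t)^{-N}F_{j'}(t^p)^N$ for weight-one forms $F_j,F_{j'}$ (so $l_j=l_{j'}=1$); the congruence \eqref{eq:cong} then turns this ratio into $\alpha_{\fp,j}(t)^{-1}$ modulo $\fp$, which cancels the factor $\widetilde\alpha_{p,j}(t)\equiv\alpha_{\fp,j}(t)$, leaving a constant. Running this over all $\fp\mid p$ and invoking the arbitrariness of the cusp (for a suitable Hauptmodul) gives constancy of the $t$-expansion modulo $p$ at every cusp.

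The delicate point I anticipate is precisely this last identification: reconciling the weight-one normalization $l_j=l_{j'}=1$ with the general exponent $l_jl_{j'}$ of \eqref{eq:cong}, and verifying that the powers of $\Delta$ and $t$ needed to neutralize the first-coordinate weights of $B_j$ and $B_{j'}$ are exactly those furnished by $(\Delta/t)^{d_{p,j}}$, so that every $t$-dependent factor cancels on the nose rather than up to a stray power of $t$ or of $\alpha_{\fp,j}$. Making this bookkeeping exact is where the degree identity \eqref{eq:degapj} and the explicit shape of $\alpha_{p,j}$ in \eqref{eq:ajp} must be used together.
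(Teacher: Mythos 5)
Your overall route coincides with the paper's: weight bookkeeping via \eqref{eq:degapj}, divisor cancellation at the pole of $t$ for holomorphy and Part~1, reduction of the multiplier system to powers of $v_\Delta$ with $p$ odd clearing the square-root ambiguity, and Part~2 via the congruence \eqref{eq:cong} together with the Frobenius twist $F_{j'}(t^p)\equiv F_{j'}(t)^p$. Two points need repair, one of which is a genuine gap. The minor one: your claim that ``$N$ divides all three exponents ($d_{p,j}$ by \eqref{eq:degapj})'' is false in general --- for $W_{13}$ with $p=5$ inert one has $N=2$, $\chi=-\tfrac32$, $\lambda_2=\tfrac13$, hence $d_{5,1}=\tfrac{p-3}{2}=1$. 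The correct argument does not need this divisibility: using $v_{B_l}^2=v_\Delta^{\chi(\lambda_l-1)}$ (which follows from writing the classical form $s_l$ with $\mathrm{div}(s_l)=\mathrm{div}(\varphi_l')$ in terms of $\Delta$ and the $Q_i$) and \eqref{eq:degapj}, the total character collapses to $v_\Delta^{\frac{\chi}{2}N(p-1)}$, and $v_\Delta^{N\chi}=1$ finishes it for $p$ odd.

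The genuine gap is in the step you yourself flag as delicate: you identify $(\Delta/t)^{d_{p,j}}B_j^{-N}B_{j'}^{pN}$ with $F_j(t)^{-N}F_{j'}(t^p)^{N}$ ``up to a constant'' on the grounds that the weights match. Matching weights only determines a twisted modular form up to a weight-zero factor, i.e.\ up to an arbitrary rational function of $t$; if that factor were non-constant, the cancellation against $\widetilde\alpha_{p,j}(t)$ would fail and the $t$-expansion would not reduce to a constant. The missing input is a statement about divisors, not weights: the normalization of the Picard--Fuchs operators used in \cite{B} (all finite regular singular points have a local exponent $0$) forces $\mathrm{div}(F_j)$ to be supported at the pole of $t$, whence the exact identity $F_jB_j^{-l_j}=(\Delta/t)^{-l_j\lambda_j\chi(Y)/2}$ with no stray rational function. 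Once this is in place the exponents do match on the nose via \eqref{eq:degapj}, and the congruence \eqref{eq:cong} yields $\bigl(h^N_{p,j}\bigr)^{l_jl_{j'}}\equiv 1\bmod p$; note that even then one still needs the small final observation that a $p$-integral series with unit constant term whose $l_jl_{j'}$-th power is $1$ mod $p$ is itself constant mod $p$, rather than literally obtaining $\alpha_{\fp,j}(t)^{-1}$ as you write (your normalization $l_j=l_{j'}=1$ is not available in general).
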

\begin{proof}
The twisted modular forms~$h^N_{p,j}$ have been constructed precisely to satisfy point~$1$ of the statement. It remains to show that this construction implies the other claims of the theorem.

The weight of~$h^N_{p,j}$ is zero in almost every component and it is~$-N$ in the~$j$-th component and~$p\cdot N$ in the~$j'$-th component, as follows from the description of the weight of~$B_l$ in Proposition~\ref{prop:Bj}. In order to prove the statement, we only have to check that the weight in the fist component is zero. By Lemma~\ref{lem:D} and Proposition~\ref{prop:Bj} it follows that the weight in the first component is
\[
d_{p,j}\cdot\frac{-2}{\chi(\Po/\Gamma)}\+N\cdot\lambda_j-pN\cdot\lambda_{j'}\=0\,,
\]
since~$d_{p,j}=\frac{-\chi(\Po/\Gamma)}{2}\cdot N\cdot (p\lambda_{j'}-\lambda_j)$ as stated in~\eqref{eq:degapj}.

The computation of the multiplier system goes similarly. Let~$v_\Delta\colon\Gamma\to\C$ and~$v_{B_l}\colon\Gamma\to\C$ denote the multiplier systems of~$\Delta$ and of~$B_l$ ($l\ge2$) respectively. Let~$s_l\in M_*(\Gamma)$ be as in the proof of Proposition~\ref{prop:Bj} be characterized by the condition~$\mathrm{div}(s_l)=\mathrm{div}(\varphi_l')$. As mentioned in the proof of the proposition, $s_l$ is a polynomial in~$Q_1,\dots,Q_r$. Then, thanks to the relations~\eqref{eq:relmf} and the definition of~$\Delta$, we can write~$s_l=\Delta^{m_l}\prod_{i=1}^rQ_i^{\delta_{l,i}}\cdot \hat{s}_l(t)$, for some~$m_l,\delta_{l,i}\in\Z_{\ge0}$ and~$\widehat{s}_l(t)$ a polynomial of degree bounded by~$m_l$. More precisely,~$m_l$ is the degree of~$\mathrm{div}{\varphi_l'}$ outside the elliptic points. Since~$\varphi_l'$, as well as~$\widehat{s}_l(t)$, has trivial multiplier system, it follows that the multiplier system~$v_{B_l}$ satisfies
\begin{equation}
\label{eq:vB}
v_{B_l}^{2}\=v_{s_l}\=v_\Delta^{m_l}\prod_{i=1}^rv_{Q_i}^{\delta_{l,i}}\=v_\Delta^{m_l}\cdot v_{Q_1}^{n_1\sum_{i=1}^r{\frac{\delta_{l,i}}{n_i}}}\=v_\Delta^{\chi(\Po/\Gamma)(\lambda_l-1)}\,,
\end{equation}
since~$m_l+\sum_{i=1}^r{\delta_{l,i}/n_i}=\deg{\mathrm{div}\varphi_l'}=(\lambda_l-1)\chi(\Po/\Gamma)$ by~\eqref{eq:Le}.
The definition~\eqref{eq:hjp} of~$h^N_{p,j}$ implies then its multiplier system~$v_{p,j}$ is computed by
\[
v_{p,j}\=v_\Delta^{d_{p,j}}\cdot v_{B_j}^{-N}\cdot v_{B_{j'}}^{p\cdot N}\= v_\Delta^{d_{p,j}-\tfrac{N}{2}\cdot \chi(\Po/\Gamma)(\lambda_j-1)+\tfrac{pN}{2}\cdot\chi(\Po/\Gamma)(\lambda_{j'}-1)}\= v_\Delta^{\tfrac{\chi(\Po/\Gamma)}{2}N(p-1)}\,,
\]
where the last equality follows from~\eqref{eq:degapj}. The definition of~$\Delta$ and of~$Q_i$ imply that~$v_\Delta^{N\cdot\chi(\Po/\Gamma)}=1$ is the trivial multiplier system. It follows that~$v_{p,j}$ is trivial for $p$ odd. 

Finally, we discuss the~$t$-expansion of~$h^N_{p,j}$ at the cusps, whose properties essentially follow from the congruence~\eqref{eq:cong}. 
Theorem~3 in~\cite{B}, from which~\eqref{eq:cong} is deduced, concerns the integrality of coefficients of power series solutions of the~$g$ Picard-Fuchs differential equations associated to the model~$\mathcal{X}\to\mathcal{Y}$. These solutions, for~$j=1,\dots,g$, are the~$t$-expansions of a~$l_j$-th root of the modular form~$F_j$ in part~$3$ of the theorem in Section~\ref{sec:intro1} (recall that~$F_j$ has weight~$l_j$ in the $j$-th component and zero else). The proof of Theorem~3 in~\cite{B} uses a normalization of the differential operators (namely, that all finite regular singular points have a local exponent equal to~$0$ in the Riemann scheme) that, translated in the language of modular forms, implies that~$F_j$ has divisor supported at the point where the Hauptmodul~$t$ has its pole. 
In particular, the modular form~$F_jB_j^{-l_j}$, which has the same divisor of~$F_j$ and weight~$(l_j\lambda_j,0,\dots,0)$, is a power of~$\Delta/t$, more precisely~$F_jB_j^{-l_j}=(\Delta\cdot t^{-1})^{-l_j\lambda_j\chi(Y)/2}$, for every~$j=1,\dots,g$. 
Congruence~\eqref{eq:cong} implies then 
\[
\begin{aligned}
1&\;\equiv\;\alpha_{p,j}(t)^{l_jl_{j'}}\cdot F_{j'}(t)^{Np}F_j(t)^{-N}\;\equiv\;\widetilde{\alpha}_{p,j}(t)^{l_jl_{j'}}\cdot \Bigl(\frac{\Delta(t)}{t}\Bigr)^{Nl_jl_{j'}\frac{-\chi(Y)}{2}(p\lambda_{j'}-\lambda_j)}B_{j'}(t)^{Npl_jl_{j'}}B_{j}(t)^{-Nl_jl_{j'}}\\ &\= h^N_{p,j}(t)^{l_jl_{j'}}\mod p\,.
\end{aligned}
\]

It follows that the modulo~$p$ reduction of the~$t$-expansion of~$h_{p,j}$ at the cusp~$i\infty$ is constant, since its power is~$1$. 
This holds at every cusp, since the congruence~\eqref{eq:cong} holds at every cusp~$c_i$ with~$t$ replaced by the Hauptmodul~$t-t(c_i)$ with a simple zero in~$c_i$.
\end{proof}

\begin{remark}
Notice that the two main properties of~$h^N_{p,j}$ in Theorem~\ref{thm:phi} are a priori unrelated. For instance, the twisted modular form~$(\Delta/t)^{d_{p,j}}\widehat{\alpha}_{p,j}(t)\prod_{j=2}^g{B_j^{\beta_j}}$ satisfies the first property of the theorem for any choice integers~$\beta_1,\dots,\beta_g$, but not the second one in general. However, once the correct weight is fixed (according to the splitting of~$p$ in~$F$), i.e., the values of~$\beta_1,\dots,\beta_g$, the second property follows from the first one. Corollary~\ref{cor:modp} discusses the other implication.
\end{remark}

\subsection{Lifts of partial Hasse invariants}
Let~$\Gamma$ be a non-co-compact genus zero Fuchsian group. In this section we work under an extra assumption on the order of the elliptic points of~$\Gamma$, which will permit us to construct lifts of partial Hasse invariants of minimal weight, refining Theorem~\ref{thm:phi}. For every weight~$q\in\Q$ such that~$M_q(\Gamma,v)\neq\emptyset$, for some multiplier system~$v$, we assume that the number~$-\chi(\Po/\Gamma)\cdot q/2$ satisfies
\begin{equation}
\label{eq:ass}
-\frac{\chi(\Po/\Gamma)}{2}\cdot q\=m\+\sum_{i=1}^r\frac{b_{i}}{n_i}\quad\text{for unique }m\in\Z_{\ge0}\text{ and }b_{i}\in\{0,\dots,n_i-1\}\,.
\end{equation}
This situation holds in infinitely many cases: trivially if~$\Gamma$ is torsion free or has only one elliptic points, and in the case~$\Gamma=\Delta(n,m,\infty)$ is an affine triangle group with~$n$ and~$m$ coprime.

In the case~$\Gamma$ is the uniformizing group of a Kobayashi geodesic~$Y$ with model~$\mathcal{X}\to\mathcal{Y}$ defined over~$R=\mathcal{O}_K[S^{-1}]$, the above assumption~\eqref{eq:ass} gives us control on the non-ordinary reduction modulo~$p\not\in S$ of elliptic points of~$Y$ as follows.  By definition~\eqref{eq:ajp}, the polynomial~$\alpha_{p,j}(t)$ is a tuple of polynomials over finite fields, each one of degree~$-\frac{\chi(Y)}{2}N(p\lambda_{j'}-\lambda_j)$. Assumption~\eqref{eq:ass} applied to~$q=N\cdot(p\lambda_{j'}-\lambda_j)$ then implies that the exponent~$\epsilon_{j,i}^\fp$ in~\eqref{eq:ajp}, if~$t_i=t(e_i)\in R$ is the image of an elliptic point, does not depend on~$\fp$, but only on~$p$ (in virtue of the unique decomposition of~$\frac{\chi(\Po/\Gamma)}{2}\cdot q$). in other words, if an elliptic point of~$Y$ has non-ordinary reduction modulo one prime ideal~$\fp$ over~$p$, it has non-ordinary reduction for every prime ideal dividing~$p$. 

Recall that~$\mathrm{ph}_{p,j}^*(t)$ in~\eqref{eq:polref} is the polynomial describing the non-ordinary locus outside the set~$E_j$, i.e., the image via~$t$ of elliptic points of~$Y$ and of points lying on the divisor of~$\varphi_j'$. We consider now the non-ordinary locus outside the elliptic points alone. Let~$e_1,\dots, e_r$ be representatives of the elliptic points of~$\Gamma$ and define a new set~$E_j^e:=\{t(e_1),\dots,t(e_r)\}\subset E_j $. Denote by~$(E_j^e)_\fp$ its reduction modulo~$\fp$, and consider the polynomials
\begin{equation}
\label{eq:phipole}
\mathrm{ph}_{\fp,j}^e(t)\:=\prod_{\overset{t_0\in\overline{\F}_p\setminus (E_j^e)_\fp}{\mathcal{X}_{\bar{\fp},t_{0}}\in D_{j}}}(t-t_0)^{\epsilon_{j,0}}\,,\quad\mathrm{ph}_{p,j}^e(t):=\bigl(\mathrm{ph}_{\fp,j}^e(t)\bigr)_{\fp|p}\,.
\end{equation}
where~$\epsilon_{j,0}$ is as in~\eqref{eq:ajp}, but we suppressed the dependence on~$\fp$ since under Assumption~\eqref{eq:ass} it depends only on~$p$. Let~$d^e_{p,j}$ denote the degree of~$\mathrm{ph}^e_{p,j}(t)$, and let $\widetilde{\mathrm{ph}^{e}_{p,j}}(t)\in R[t]$ denote a lift of $\mathrm{ph}^e_{p,j}(t)$ of degree $d^{e}_{p,j}$. 

For~$j=1,\dots,g$ we define twisted modular forms~$h_{p,j}(\tau)$ by
\begin{equation}
\label{eq:hpj}
h_{p,j}(\tau)\:=\Bigl(\frac{\Delta}{t}\Bigr)^{d^e_{p,j}}\prod_{i=1}^rQ_i^{\epsilon_{j,i}}\cdot B_j^{-1}B_{j'}^{p}\cdot \widetilde{\mathrm{ph}^e_{p,j}}(t)\
\end{equation}
The twisted modular forms~$h_{p,j}(\tau)$, which depends on the choice of polynomial~$\widetilde{\mathrm{ph}_{p,j}^e}(t)$, are lifts of partial Hasse invariants of minimal weight and minimal divisor. 
\begin{theorem}
\label{thm:phi2}
Let~$p\not\in S$ be an odd prime. For~$j=1,\dots,g$ the twisted modular form~$h_{p,j}$ is holomorphic of (non-parallel) weight~$(p-1)$. It satisfies:
\begin{enumerate}
\item The zero locus of~$h_{p,j}$ outside the elliptic points is described by a polynomial in~$t$ with~$p$-integral coefficients and such that its reduction modulo~$p$ has the same zero locus of~$\mathrm{ph}^e_{p,j}(t)$. Moreover,~$h_{p,j}$ has a zero at an elliptic point~$e_i$ of~$\Gamma$ only if the related fiber~$\mathcal{X}_{(t_i)}$, where~$t_i=t(e_i)\in R$, has non-ordinary reduction modulo every~$\fp|p$. 
\item The~$t$-expansion of~$h_{p,j}$ at every cusp is constant modulo~$p$. 
\end{enumerate}
Moreover $(h_{p,j})^{2}$ has trivial multiplier system for every $p$, and $h_{p,j}$ has trivial multiplier system for every sufficiently big prime $p$. 
\end{theorem}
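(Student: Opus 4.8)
The plan is to prove Theorem~\ref{thm:phi2} by closely following the structure of the proof of Theorem~\ref{thm:phi}, with Assumption~\eqref{eq:ass} doing the work that previously forced us to pass to the $N$-th power. The overall strategy is: (i) verify the claimed weight by a direct divisor/degree bookkeeping, exactly as in the parallel computation for $h_{p,j}^N$; (ii) establish the zero-locus statement (Part~1), separating the generic non-ordinary points from the elliptic points; (iii) deduce the constancy of the $t$-expansion modulo $p$ (Part~2) from the congruence~\eqref{eq:cong}; and (iv) analyze the multiplier system to obtain the triviality claims. The key conceptual point throughout is that~\eqref{eq:ass} guarantees that the exponents $\epsilon_{j,i}^{\fp}$ attached to elliptic points depend only on $p$, not on $\fp$, which is precisely what makes $\mathrm{ph}_{p,j}^e(t)$ a well-defined polynomial over $R\otimes\F_p$ rather than merely a tuple of unrelated polynomials, and which allows the factors $Q_i^{\epsilon_{j,i}}$ in~\eqref{eq:hpj} to assemble correctly.

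First I would compute the weight. By Proposition~\ref{prop:Bj} the factor $B_j^{-1}B_{j'}^{p}$ contributes weight $-1$ in the $j$-th component and $p$ in the $j'$-th, so after accounting for the zero components the only thing to check is that the first component vanishes. Collecting the first-component contributions of $(\Delta/t)^{d_{p,j}^e}$ (weight $-2/\chi(\Po/\Gamma)$ per power, by Lemma~\ref{lem:D}), of the $Q_i^{\epsilon_{j,i}}$ (weight $-2/(\chi(\Po/\Gamma)n_1)$ per power, by~\eqref{eq:Qi}), and of $B_j^{-1}B_{j'}^{p}$ (weights $\lambda_j$ and $-p\lambda_{j'}$ in the first slot), the vanishing reduces to the identity
\[
d_{p,j}^e\cdot\frac{-2}{\chi(\Po/\Gamma)}\;+\;\sum_{i=1}^r\epsilon_{j,i}\cdot\frac{-2}{\chi(\Po/\Gamma)n_i}\;+\;\lambda_j\;-\;p\lambda_{j'}\=0 ,
\]
which, after multiplying through by $-\chi(\Po/\Gamma)/2$, is exactly the degree formula~\eqref{eq:degapj} for $\deg(\alpha_{p,j})$ combined with the decomposition $\deg(\alpha_{p,j})=N\,d_{p,j}^e+N\sum_i\epsilon_{j,i}/n_i$ coming from~\eqref{eq:ajp}; the factors of $N$ cancel because the exponents in~\eqref{eq:hpj} are the $N=1$ versions guaranteed by~\eqref{eq:ass}. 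For Part~1, the zero locus of $h_{p,j}$ away from the elliptic points is visibly the zero locus of $\widetilde{\mathrm{ph}_{p,j}^e}(t)$, which reduces mod $p$ to $\mathrm{ph}_{p,j}^e(t)$ by construction; the factor $\prod_i Q_i^{\epsilon_{j,i}}$ contributes a zero at $e_i$ precisely when $\epsilon_{j,i}\neq 0$, and by~\eqref{eq:ajp} this happens only when the fiber over $(t_i)_\fp$ is non-ordinary, which by the $\fp$-independence from~\eqref{eq:ass} means non-ordinary for all $\fp\mid p$.

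Part~2 follows the same mechanism as in Theorem~\ref{thm:phi}: using $F_jB_j^{-l_j}=(\Delta\cdot t^{-1})^{-l_j\lambda_j\chi(Y)/2}$ and substituting into congruence~\eqref{eq:cong}, the power $h_{p,j}(t)^{l_jl_{j'}}$ reduces to $1$ modulo $p$, forcing the $t$-expansion of $h_{p,j}$ itself to be constant modulo $p$; this is inherited at every cusp by replacing $t$ with the appropriately normalized local Hauptmodul. The main obstacle, and the step requiring the most care, is the multiplier-system analysis yielding the final sentence. Running the computation~\eqref{eq:vB} with exponent $1$ rather than $N$ in front of $B_j$, the multiplier system of $h_{p,j}$ works out to a power of $v_\Delta$ with exponent $\tfrac{\chi(\Po/\Gamma)}{2}(p-1)$ (an integer, since $\chi$ is even in the relevant normalization and $p$ is odd), whereas before the factor $N$ guaranteed landing in the subgroup where $v_\Delta^{N\chi}=1$ is trivial. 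Here only $v_\Delta^{N\chi}$ is known to be trivial, so $v_{p,j}$ need not be trivial outright; however $v_{p,j}^2=v_\Delta^{\chi(p-1)}$ and the square of this lands in a range controlled by the order of $v_\Delta$, which is finite and divides $N\chi$. The plan is therefore to show $(h_{p,j})^2$ always has trivial multiplier system by checking $v_\Delta^{\chi(p-1)}$ is trivial using the explicit order of $v_\Delta$ read off from~\eqref{eq:Qid}, and to observe that for $p$ large enough $p-1$ becomes divisible by the order of $v_\Delta$ (equivalently $p\equiv 1$ modulo that order), making $v_{p,j}$ itself trivial. Pinning down that order precisely in terms of the $n_i$ and $\chi$ is the delicate bookkeeping I expect to be the crux.
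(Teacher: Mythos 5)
Your treatment of the weight, of Part 1, and of Part 2 is essentially the paper's argument (the paper packages it more economically by first proving the identity $h_{p,j}(\tau)^N=h^N_{p,j}(\tau)$ via~\eqref{eq:relmf} and $\Delta=Q_1^{n_1}t$, and then importing claims 1 and 2 wholesale from Theorem~\ref{thm:phi}; your direct bookkeeping amounts to the same computation, and your reading of~\eqref{eq:cong} is off by a harmless factor of $N$ in the exponent). The genuine gap is in the multiplier-system analysis. First, your formula $v_{p,j}=v_\Delta^{\chi(\Po/\Gamma)(p-1)/2}$ cannot be right as stated: the definition~\eqref{eq:hpj} of $h_{p,j}$ contains the factors $\prod_iQ_i^{\epsilon_{j,i}}$ with $0\le\epsilon_{j,i}<n_i$, whose multiplier systems $v_{Q_i}^{\epsilon_{j,i}}$ are proper roots of $v_\Delta$ (since $v_{Q_i}^{n_i}=v_{Q_1}^{n_1}=v_\Delta$ by~\eqref{eq:relmf}) and do not assemble into an integer power of $v_\Delta$; moreover $\chi(\Po/\Gamma)$ is an orbifold Euler characteristic and is typically non-integral (e.g.\ $\chi(W_{13})=-3/2$ in this very paper), so the parenthetical ``an integer, since $\chi$ is even'' is unfounded. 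Second, and more fatally, your strategy for the final claim rests on ``for $p$ large enough $p-1$ becomes divisible by the order of $v_\Delta$.'' Divisibility of $p-1$ by a fixed integer $d>1$ holds only for primes $p\equiv1\bmod d$, never for all sufficiently large primes, so this route can at best establish triviality of $v_{p,j}$ along an arithmetic progression of primes, not the asserted ``for every sufficiently big prime $p$.''

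The paper's actual argument avoids computing $v_{p,j}$ altogether. It introduces the spaces $M_{m,\vec b,\vec\beta}(\Gamma,\varphi)$ spanned by $(\Delta/t)^m\prod_iQ_i^{b_i}\prod_jB_j^{\beta_j}\cdot g(t)$ and observes that, precisely because of Assumption~\eqref{eq:ass}, a weight $\vec k$ determines $(m,\vec b,\vec\beta)$ uniquely; hence \emph{any two} twisted modular forms of the same weight have the same multiplier system. It then exhibits a form of weight $2\vec k_{p,j}$ with manifestly trivial multiplier system built from $\{t,t',\varphi_j',\varphi_{j'}'\}$ (giving triviality of $v_{h_{p,j}^2}$ for all odd $p$), and for $h_{p,j}$ itself invokes Riemann--Roch, as in~\cite{MZ}, to produce a trivial-multiplier form of weight $\vec k_{p,j}$ once $p$ is large (e.g.\ $p\lambda_2\ge3$). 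This ``rigidity of the multiplier system given the weight'' is the missing idea in your proposal; without it, the final two assertions of the theorem do not follow from your computation.
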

\begin{proof}
If~$N=1$, then the statement reduces to Theorem~\ref{thm:phi}, so we assume that~$Y$ has~$r\ge1$ elliptic points~$e_1,\dots,e_r$ of order~$n_1,\dots,n_r$ respectively.  
The identities~\eqref{eq:relmf} and the definition of~$\Delta=Q_1^{n_1}\cdot t$ imply that~$h_{p,j}(\tau)^N=h^N_{p,j}(\tau)$, since
\[
\prod_{i=1}^r{Q_i}^{\epsilon_{j,i}\cdot N}\=Q_1^{\epsilon_{j,1}\cdot N}\prod_{i=2}^rQ_1^{n_1\epsilon_{j,i}\cdot\frac{N}{n_i}}(t(e_i)-t)^{\epsilon_{j,i}\cdot\frac{N}{n_i}}\=\Bigl(\frac{\Delta}{t}\Bigr)^{N\cdot\sum_{i=1}^r\frac{\epsilon_{j,i}}{n_i}}\prod_{i=2}^r(t(e_i)-t)^{\epsilon_{j,i}\cdot \frac{N}{n_i}}\,,
\]
and~$d_{p,j}=d^e_{p,j}+N\sum_{i=1}^r\frac{\epsilon_{j,i}}{n_i}$, as one can see from~\eqref{eq:ajp}. Therefore, claims 1 and 2 of the theorem follow from the analog claims Theorem~\ref{thm:phi}, and only the claim on the multiplier systems is left to be proven. 

Fix~$m\in\Z_{\ge0}$, and~$\vec{b}=(b_1,\dots,b_r)\in\Z_{\ge0}^r$ with~$0\le b_i\le n_i-1$, and~$\vec{\beta}=(\beta_2,\dots,\beta_g)\in\Z^{g-1}$, and consider the following space of holomorphic twisted modular forms
\[
M_{m,\vec{b},\vec{\beta}}(\Gamma,\varphi)\=\mathrm{span}_\C\biggl\{\Bigl(\frac{\Delta}{t}\Bigr)^m\cdot\prod_{i=1}^rQ_i^{b_i}\cdot\prod_{j=2}^g{B_j^{\beta_j}}\cdot g(t)\;\bigl|\; g(t)\in(\mathcal{O}_K[S^{-1}])[t]\;,\deg{g(t)}\le m\biggr\}\,.
\]
Note that all elements in~$M_{m,\vec{b},\vec{\beta}}(\Gamma,\varphi)$ have the same weight and the same multiplier system, both determined by the choice of~$m$ and~$\vec{b}$ and~$\vec{\beta}$. Conversely, a choice of weight~$\vec{k}=(k_1,k_2,\dots,k_g)\in\Q\times\Z^{g-1}$ forces the vector~$\vec{\beta}$ to be~$\vec{\beta}=(k_2,\dots,k_g)$, and imposes the condition 
\[
k_1\+\sum_{j=2}^g{\lambda_gk_g}\=\frac{-2}{\chi(\Po/\Gamma)}\biggl(m+\sum_{i=1}^r\frac{b_i}{n_i}\biggr)\,,
\]
which in general does not determine~$m$ and~$\vec{b}=(b_1,\dots,b_r)$. However, under the assumption~\eqref{eq:ass}, this choice is unique, and the choice of weight~$\vec{k}$ determines the space~$M_{m,\vec{b},\vec{\beta}}(\Gamma,\varphi)$, and in particular a multiplier system, uniquely. We remark that if~$M_{\vec{k}}(\Gamma,\varphi)\neq\emptyset$, then it is always of the form~$M_{m,\vec{b},\vec{\beta}}(\Gamma,\varphi)$, because the divisors of the modular forms $Q_1,\dots,Q_r$ and $\Delta$ are minimal, and then are the building blocks of every modular form on~$\Gamma$ and, combined with~$B_2,\dots,B_g$, of every twisted modular form.

For~$\vec{k}\in\Z^{g}$ consider the space~$M_{\vec{k}}(\Gamma,\varphi)$ of twisted modular forms of trivial multiplier system. If it is not empty, it is of the form~$M_{m,\vec{b},\vec{\beta}}(\Gamma,\varphi)$ for some~$m,\vec{b},\vec{\beta}$. Under Assumption~\eqref{eq:ass} this space is uniquely determined, and in particular, its elements have trivial multiplier system. 

The twisted modular form~$h_{p,j}$ has weight~$\vec{k}_{p,j}=(0,\dots,0,-1,0\dots,0,p,0,\dots,0)$ with~$1$ and~$p$ in the $j$-th position and~$j'$-th position respectively. It has therefore trivial multiplier system, under Assumption~\eqref{eq:ass}, if there exists a twisted modular form of the same weight with trivial multiplier system. We notice such twisted modular form can be constructed, similarly to the proof of Lemma~\ref{lem:D}, for the weight~$2\cdot\vec{k}_{p,j}$ from~$\{t,t',\varphi_j',\varphi_{j'}'\}$, implying that~$h_{p,j}^2$ has trivial multiplier system. More generally, the existence of a twisted modular form of given weight can be shown with the Riemann-Roch theorem, as did in~\cite{MZ} in the case~$g=2$. In this case, to be able to estimate the dimension of the space of modular forms via the Riemann-Roch theorem, the prime~$p$ should be big enough with respect to the Lyapunov exponents of~$Y$ (for instance~$p\cdot\lambda_2\ge3$). In particular, the dimension is non zero for every prime~$p$ big enough.
\end{proof}

\begin{corollary}
\label{cor:degree}
Let~$\Gamma$ be a genus zero non-co-compact Fuchsian group with modular embedding~$\varphi$ and Lyapunov exponents~$\lambda_1,\dots,\lambda_g$. Assume that~$\Gamma$ satisfies~\eqref{eq:ass}. Let~$\Po/\Gamma\simeq Y\hookrightarrow M_F$ admit an integral model over~$\mathcal{O}_K[S^{-1}]$. Then, for every~$j=1,\dots,g$ and prime~$p\not\in S$ and~$\mathrm{ph}_{p,j}^e(t)$ as in~\eqref{eq:phipole} it holds
\[
\deg\mathrm{ph}_{p,j}^e(t)\=\dim M_{p\lambda_{j'}-\lambda_j}(\Gamma,v)\-1\,,
\]
where~$v$ is the multiplier system~$v=v_\Delta^{\frac{\chi(Y)}{2}(p-1)}$.
Moreover, if~$h_{p,j}$ has trivial multiplier system it holds
\[
\deg\mathrm{ph}_{p,j}^e(t)\=\dim M_{(0,\dots,(p)_{j'},0,\dots0,(-1)_j,0,\dots,0)}(\Gamma,\varphi)\-1\,,
\] 
where~$j$ and~$j'$ are related as in Section~\ref{sec:intro1}.
\end{corollary}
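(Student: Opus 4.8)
The plan is to reduce both identities to a single dimension count for the spaces $M_{m,\vec b,\vec\beta}(\Gamma,\varphi)$ introduced in the proof of Theorem~\ref{thm:phi2}. First I would record that, for any admissible triple $(m,\vec b,\vec\beta)$, the space $M_{m,\vec b,\vec\beta}(\Gamma,\varphi)$ has dimension $m+1$: its elements are $(\Delta/t)^m\prod_i Q_i^{b_i}\prod_l B_l^{\beta_l}\cdot g(t)$ with $\deg g(t)\le m$, and the fixed nowhere-vanishing ``backbone'' $(\Delta/t)^m\prod_i Q_i^{b_i}\prod_l B_l^{\beta_l}$ times the monomials $1,t,\dots,t^m$ is a basis. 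Under assumption~\eqref{eq:ass} a weight $\vec k$ determines $(m,\vec b,\vec\beta)$ uniquely, so $\dim M_{\vec k}(\Gamma,\varphi)=m+1$, and everything reduces to computing the integer $m$ attached to the weight $\vec k_{p,j}=(0,\dots,(-1)_j,\dots,(p)_{j'},\dots,0)$ of $h_{p,j}$ and identifying it with $\deg\mathrm{ph}^e_{p,j}(t)$.

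To pin down $m$ I would feed the explicit lift $h_{p,j}$ of~\eqref{eq:hpj}, which is a nonzero element of $M_{\vec k_{p,j}}(\Gamma,\varphi)$ by Theorem~\ref{thm:phi2}, into this normal form. Using $\Delta=Q_1^{n_1}t$ together with the relations $Q_i^{n_i}=(t(e_i)-t)\,\Delta/t$ from~\eqref{eq:relmf}, I would rewrite $\prod_i Q_i^{\epsilon_{j,i}}$ as $\prod_i Q_i^{b_i}$ times a power of $\Delta/t$ and a polynomial in $t$, with $b_i\equiv\epsilon_{j,i}\pmod{n_i}$. Reading off the backbone then exhibits $h_{p,j}$ inside $M_{m,\vec b,\vec\beta}(\Gamma,\varphi)$ with $m=\deg\mathrm{ph}^e_{p,j}(t)+\sum_i\lfloor\epsilon_{j,i}/n_i\rfloor$ and $\vec\beta$ supported at $j,j'$ with values $-1,p$; by the uniqueness in~\eqref{eq:ass} this is the triple attached to $\vec k_{p,j}$.

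The crux is to show that the carry term vanishes, i.e.\ that $\epsilon_{j,i}\in\{0,\dots,n_i-1\}$ at every elliptic point, so that $m=\deg\mathrm{ph}^e_{p,j}(t)$ exactly. This is the content of the normalization underlying~\eqref{eq:ajp}: the exponent $\epsilon_{j,i}/n_i$ is the normalized local exponent of the Picard--Fuchs equation at the elliptic point, hence lies in $[0,1)$, giving $\epsilon_{j,i}<n_i$. Concretely, combining~\eqref{eq:degapj} with~\eqref{eq:ajp} yields $-\tfrac{\chi(Y)}{2}(p\lambda_{j'}-\lambda_j)=\deg\mathrm{ph}^e_{p,j}(t)+\sum_i\epsilon_{j,i}/n_i$, and once the $\epsilon_{j,i}/n_i$ are known to be genuine fractional parts, uniqueness in~\eqref{eq:ass} forces this to coincide with $m+\sum_i b_i/n_i$ term by term, so $m=\deg\mathrm{ph}^e_{p,j}(t)$ and $b_i=\epsilon_{j,i}$. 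The space $M_{m,\vec b,\vec\beta}(\Gamma,\varphi)$ containing $h_{p,j}$ thus has dimension $\deg\mathrm{ph}^e_{p,j}(t)+1$; when $h_{p,j}$ has trivial multiplier system this is $M_{\vec k_{p,j}}(\Gamma,\varphi)$, giving the second identity. I expect this elliptic-point bookkeeping---ruling out a carry into $m$---to be the main obstacle, as it is the only place where~\eqref{eq:ass} and the local-exponent normalization of~\cite{B} are genuinely used.

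Finally, for the first identity I would pass from twisted to classical forms using that $B_j,B_{j'}$ are nowhere vanishing on $\overline{\Po}$ (Proposition~\ref{prop:Bj}): multiplication by $B_jB_{j'}^{-p}$ is a $\C$-linear isomorphism carrying $M_{\vec k_{p,j}}(\Gamma,\varphi)$ onto the space of classical modular forms of weight $p\lambda_{j'}-\lambda_j$, whose multiplier system I would compute from~\eqref{eq:vB} and the relation $v_\Delta^{N\chi(\Po/\Gamma)}=1$ to be $v=v_\Delta^{\chi(Y)(p-1)/2}$. Since an isomorphism preserves dimension, this gives $\deg\mathrm{ph}^e_{p,j}(t)=\dim M_{p\lambda_{j'}-\lambda_j}(\Gamma,v)-1$. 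The same conclusion can be reached without the $B_l$ by applying the dimension count of Lemma~\ref{lem:D} directly to the classical weight $p\lambda_{j'}-\lambda_j$, which reproduces the equation $m+\sum_i b_i/n_i=-\tfrac{\chi(Y)}{2}(p\lambda_{j'}-\lambda_j)$ and hence the same value of $m$.
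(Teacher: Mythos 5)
Your proposal is correct and follows essentially the same route as the paper: under assumption~\eqref{eq:ass} the weight determines the triple $(m,\vec b,\vec\beta)$ uniquely so that $\dim M_{m,\vec b,\vec\beta}=m+1$, and plugging the explicit lift $h_{p,j}$ from~\eqref{eq:hpj} (equivalently $h_{p,j}B_jB_{j'}^{-p}$ in the classical space of weight $p\lambda_{j'}-\lambda_j$) into this normal form identifies $m$ with $d^e_{p,j}$. Your additional bookkeeping ruling out a carry from the $Q_i$-exponents at elliptic points is a welcome refinement of a step the paper's proof treats implicitly, but it does not change the argument.
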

\begin{proof}
If~\eqref{eq:ass} holds, then~$M_{p\lambda_{j'}-\lambda_j}(\Gamma,v)=M_{m,\vec{b},\vec{\beta}}(\Gamma,\varphi)$ for some~$m\ge0$ and~$\vec{b}\in(\Z_{\ge0})^r$ and~$\vec{\beta}\in\Z^{g-1}$. Therefore~$\dim{M_{p\lambda_{j'}-\lambda_j}(\Gamma)}=m+1$, since the dimension of the latter space is the number of monomials in~$t$ of degree at most~$m$. From the proof of Theorem~\ref{thm:phi} it follows that~$h_{p,j}B_jB_{j'}^{-p}\in M_{p\lambda_{j'}-\lambda_j}(\Gamma,v)$, and by definition~\eqref{eq:hpj} that~$h_{p,j}B_jB_{j'}^{-p}=({\Delta}/{t})^{d^e_{p,j}}\prod_{i=1}^rQ_i^{\epsilon_{j,i}}\cdot \widetilde{\mathrm{ph}^e_{p,j}}(t)$, where~$d^e_{p,j}$ is the degree of~$\widetilde{\mathrm{ph}^e_{p,j}}(t)$. Then~$d^e_{p,j}=m$. 
The second identity for~$\deg\mathrm{ph}_j^e(t)$ follows from the last statement of Theorem~\ref{thm:phi2} and its proof. 
\end{proof}
\begin{remark}
The corollary gives a convenient way to compute the degree of~$\mathrm{ph}_j^e(t)$, by computing the dimension of a space of twisted modular forms, which is easily done or by using the explicit generators of Section~\ref{sec:genus0}, or from the Riemann-Roch theorem. The relation between~$j'$ and~$j$, which is needed for the identification of the weight, is explicitly described in Section 1.5~\cite{B}. See Section~\ref{sec:example} for examples.
\end{remark}
\begin{remark}
The corollary may be true without assumption~\eqref{eq:ass}, but this is not clear from our methods. It may be proved in the general form by a careful study of the modulo~$p$ reduction of the Picard-Fuchs equations associated to the integral model of~$Y$, following some results of Dwork (see Chapter 9 of~\cite{D}).
\end{remark}

\begin{corollary}
\label{cor:modp}
Let~$\Gamma$ be a genus zero non-co-compact Fuchsian group with modular embedding~$\varphi$ and assume that~$\Gamma$ satisfies~\eqref{eq:ass}. Let~$\Po/\Gamma\simeq Y\hookrightarrow M_F$ admit an integral model over~$\mathcal{O}_K[S^{-1}]$. Let~$p\not\in S$ be a prime. For every~$j\in\{1,\dots,g\}$ let~$j'\in\{1,\dots,g\}$ be the index associated to~$j$, as in the theorem in Section~\ref{sec:intro1}, depending on the splitting of~$p$ in~$F$. 

For~$k\in\Z_{\ge0}$ let~$f_j$ be a twisted modular form of weight~$-k$ and~$kp$ in the~$j$-th and~$j'$-th component respectively, and weight zero in the other components, and assume that~$f_j$ has~$p$-integral~$t$-expansion at~$i\infty$. If~$2|k$ or if~$p$ is sufficiently big, then the~$t$-expansion of~$f_j$ at the cusp~$i\infty$ satisfy: 
\begin{equation}
\label{eq:fjmodp}
f_j(t)\;\equiv\;\frac{H_{f_j}(t)}{\mathrm{ph}_{p,j}^e(t)^k\prod_{i=1}^r(t(e_i)-t)^{\epsilon_{j,i}a_i}}\mod p\,,
\end{equation}
where~$H_{f_j}(t)$ is a polynomial of degree bounded by~$\lfloor{\deg\mathrm{div}(f_j)}\rfloor=\bigl\lfloor{k\cdot\frac{\chi(Y)}{2}(\lambda_j-p\lambda_{j'})}\bigr\rfloor$, the exponents~$\epsilon_{j,i}$ are as in~\eqref{eq:hjp}, and~$a_i\in\Z_{\ge0}$.

Conversely, for every polynomial~$H_j(t)$ satisfying the above degree bound, there exists a twisted modular form~$g_j$ of weight~$-k$ in the~$j$-th component, $pk$ in the~$j'$-th component, and zero else, whose modulo~$p$ reduction has the above form with~$H_{g_j}(t)=H_j(t)$. 
\end{corollary}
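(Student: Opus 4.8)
The key structural observation is that $f_j$ carries exactly $k$ times the weight vector of $h_{p,j}$, which has weight $-1$ in the $j$-th and $p$ in the $j'$-th component by Theorem~\ref{thm:phi2}. Under the hypothesis that $2\mid k$ or that $p$ is sufficiently large, the same theorem guarantees that $h_{p,j}^k$ has trivial multiplier system, hence the same (trivial) multiplier system as $f_j$. The plan is therefore to form
\[
R(t)\:=\frac{f_j}{h_{p,j}^k}\,,
\]
which is a twisted modular form of weight $(0,\dots,0)$ with trivial multiplier, i.e.\ a genuinely $\Gamma$-invariant meromorphic function on $\Po$ descending to a rational function of the Hauptmodul $t$ on $\pro^1$. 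By construction $f_j=R(t)\,h_{p,j}^k$, and everything reduces to analyzing $R$.

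Next I would reduce this identity modulo $p$ at the level of $t$-expansions at $i\infty$. By Theorem~\ref{thm:phi2}(2) the $t$-expansion of $h_{p,j}$ is congruent to a nonzero constant modulo $p$, which after rescaling we may normalize to $1$; since $f_j$ has $p$-integral $t$-expansion by hypothesis, the congruence $f_j(t)\equiv R(t)\bmod p$ shows that $R(t)$ reduces to a $p$-integral rational function, regular at $t=0$. It remains to identify its denominator. Because $f_j$ is holomorphic, the poles of $R$ can occur only at the zeros of $h_{p,j}^k$; by Theorem~\ref{thm:phi2}(1) the reduction of $\mathrm{div}(h_{p,j})$ is supported on the non-ordinary locus, described away from the elliptic points by $\mathrm{ph}^e_{p,j}(t)$ and at the elliptic points by the exponents $\epsilon_{j,i}$ of~\eqref{eq:hpj}. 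Raising to the $k$-th power and allowing cancellation against zeros of $f_j$ at the elliptic points then produces a denominator $\mathrm{ph}^e_{p,j}(t)^k\prod_i(t(e_i)-t)^{\epsilon_{j,i}a_i}$ with $a_i\in\Z_{\ge0}$, which is precisely the form asserted in~\eqref{eq:fjmodp}.

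For the degree bound on $H_{f_j}$ I would compare divisors: the finite zeros of $R=f_j/h_{p,j}^k$ form a subset of the zeros of $f_j$, so $\deg H_{f_j}$ is bounded by the total vanishing order of $f_j$, which by the weight--divisor relation (normalized as for $\Delta$ in Lemma~\ref{lem:D} and as used for $\varphi_j'$ in~\eqref{eq:Le}) equals $\deg\mathrm{div}(f_j)=\tfrac{\chi(Y)}{2}\,k(\lambda_j-p\lambda_{j'})$; the floor enters because the orbifold contributions at the elliptic points and at the cusp are fractional whereas $H_{f_j}$ is an honest polynomial. For the converse I would run the construction backwards: given $H_j(t)$ with $\deg H_j\le\lfloor\deg\mathrm{div}(f_j)\rfloor$, I lift the rational function $H_j(t)/\bigl(\mathrm{ph}^e_{p,j}(t)^k\prod_i(t(e_i)-t)^{\epsilon_{j,i}a_i}\bigr)$ to a $p$-integral rational function $\widetilde R(t)$ and set $g_j:=\widetilde R(t)\,h_{p,j}^k$. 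This automatically has the prescribed weight and, under the hypothesis, trivial multiplier, and its reduction is $H_j(t)/(\cdots)$ by the same congruence as above, so $H_{g_j}=H_j$ after the normalization of $h_{p,j}$.

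The step requiring the most care, and the main obstacle I anticipate, is the bookkeeping at the elliptic points and the holomorphy of $g_j$ in the converse. Concretely, the characteristic-zero zeros of $h_{p,j}^k$ along the divisor $\widetilde{\mathrm{ph}^e_{p,j}}(t)^k\prod_iQ_i^{k\epsilon_{j,i}}$ must absorb the poles of $\widetilde R(t)$ at the finite points, while the fractional orders of vanishing of the $Q_i$ have to be matched against the integral exponents $\epsilon_{j,i}a_i$; it is assumption~\eqref{eq:ass} that makes these exponents well defined and independent of $\fp\mid p$, and the degree bound $\deg H_j\le\lfloor\deg\mathrm{div}(f_j)\rfloor$ is exactly what forbids a pole of $g_j$ at the point where $t$ itself has its pole. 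Verifying that the floor in the degree bound and the holomorphy in the converse come out consistently from this elliptic-point accounting is the delicate part of the argument.
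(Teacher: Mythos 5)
Your proposal is correct and follows essentially the same route as the paper: divide by $h_{p,j}^k$ (using Theorem~\ref{thm:phi2} for the trivial multiplier system) to obtain a rational function of $t$, invoke the mod-$p$ constancy of $h_{p,j}$ to transfer the congruence to $f_j$, and read off the denominator and degree bound from the divisor of $h_{p,j}^k$. The only difference is that the paper resolves the elliptic-point bookkeeping you flag as delicate by writing $f_jB_j^kB_{j'}^{-kp}$ explicitly in the generators $\Delta, Q_1,\dots,Q_r$ and observing that the resulting exponents $b_i-k\epsilon_{j,i}$ must be nonpositive multiples of $n_i$, which is exactly where assumption~\eqref{eq:ass} enters.
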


\begin{remark}
 This result was proven by Koike (Proposition 5 in~\cite{Koike}) in the case~$\Gamma=\SL_2(\Z)$.
\end{remark}
\begin{remark}
A version of the corollary holds more generally for~$\Gamma$ not satisfying~\eqref{eq:ass}. In this case, the components of the weight of the modular form~$f_j$ must be divisible by~$N$, where~$N$ is the lowest common multiple of the orders of elliptic points of~$\Gamma$. In this case, the denominator of~\eqref{eq:fjmodp} is given by the polynomial~$\alpha_{p,j}(t)$. The proof of this fact is obtained from the one below, by replacing~$h_{p,j}(\tau)$ with~$h^N_{p,j}(\tau)$. 
\end{remark}
\begin{proof}
Let~$h_{p,j}$ be the partial Hasse invariant of weight~$-1$ in the~$j$-th component and~$p$ in the~$j'$-th component. By the assumption on~$k$ or~$p$, the power~$h_{p,j}(\tau)^k$ has trivial multiplier system, as follows from Theorem~\ref{thm:phi2}, and therefore the product~$f_j(\tau)\cdot h_{p,j}(\tau)^{-k}$ has weight zero and it is a rational function of the Hauptmodul~$t$. By writing the modular form~$f_jB_j^kB_{j'}^{-kp}$ in terms of~$\Delta$ and~$Q_1,\dots,Q_r$, and by the definition~\eqref{eq:hpj} of~$h_{p,j}$ we get
\begin{equation}
\label{eq:decj}
\begin{aligned}
\frac{f_j}{h_{p,j}^k}\=\frac{f_jB_j^kB_{j'}^{-kp}}{h_{p,j}^kB_j^kB_{j'}^{-kp}}\=\frac{(\tfrac{\Delta}{t})^{m}\prod_{i=1}^rQ_i^{b_i}\cdot g_j(t)}{(\tfrac{\Delta}{t})^{k\cdot d^e_{p,j}}\prod_{i=1}^rQ_i^{k\cdot\epsilon_{j,i}}\cdot\mathrm{ph}_{p,j}^e(t)^k}\=\Bigl({\frac{\Delta}{t}\Bigr)^{m-k\cdot d^e_{p,j}}}\prod_{i=1}^rQ_i^{b_i-k\cdot\epsilon_{j,i}}\frac{g_j(t)}{\mathrm{ph}^e_{p,j}(t)^k}\,,
\end{aligned}
\end{equation}
for some polynomial~$g_j(t)\in (\mathcal{O}_K[S^{-1}])[t]$. 
The modular form~$(\Delta/t)^{m-d_j\cdot k}\prod_{i=1}^rQ_i^{b_i-k\cdot\epsilon_{j,i}}$ has weight zero and is a rational function of~$t$. Its zeros and poles, which are located at the elliptic points that are zeros of~$f_j$ and~$h_{p,j}$ respectively, have therefore integral order. In particular the exponent~$b_i-k\epsilon_{j,i}$ of~$Q_i$  must be a multiple of the order~$n_i$ of the elliptic point~$e_i$. Since~$b_i\in\{0,\dots,n_i-1\}$ and~$\epsilon_{j,i}\ge0$ hold, it follows~$b_i-k\epsilon_{j,i}\le0$.
The first statement of the corollary follows therefore by setting~$H_{f_j}(t):=g_j(t)$ and~$a_i\in\Z\ge0$ such that~$a_i\epsilon_{j,i}=b_i-k\epsilon_{p.i}$. The degree of~$H_{f_j}$ is then bounded by~$m$, which by construction is bounded by the integral part of~$\deg\mathrm{div}(f_j)$.

Conversely, let the exponents~$m$ and~$b_i$ be as in~\eqref{eq:decj}. For every polynomial~$H_j(t)$ with~$p$-integral coefficients and degree bounded by~$m$, the twisted modular form~$(\Delta/t)^{m}\prod_{i=1}^rQ_i^{b_i}\cdot H_j(t)\cdot B_j^{-k}\cdot B_{j'}^{kp}$ has, by construction, multiplier system and weight as~$h_{p,j}^k$ and~$p$-integral $t$-expansion at~$i\infty$. Therefore the modulo~$p$ reduction of its~$t$-expansion is as in the statement of the corollary with~$H_{f_j}(t)=H_j(t)$.
\end{proof}

\subsection{Examples: curves in Hilbert modular surfaces}
\label{sec:example}
We give some examples of Corollary~\ref{cor:degree} in the case of curves in Hilbert modular surfaces associated to a real quadratic field~$F$. In this case the index~$j$ takes value in the set~$\{1,2\}$ and the relation between~$j$ and~$j'$ in Part 3 of Theorem~\ref{sec:intro1} is explicitly described as follows, in terms of the splitting of an unramified prime~$p\in\Z$:
\begin{itemize}
\item if~$p$ is split in~$F$, then it holds~$j=j'$ for~$j=1,2$. The partial Hasse invariants~$h_{p,1}$ and~$h_{p,2}$ have weight~$(p-1,0)$ and~$(0,p-1)$ respectively;
\item if~$p$ is inert in~$F$, then it holds~$j=2'$ if~$j=1$ and~$j'=1$ if~$j=2$. The partial Hasse invariants~$h_{p,1}$ and~$h_{p,2}$ have weight~$(-1,p)$ and~$(p,-1)$ respectively.
\end{itemize}
Moreover, in the case of curves in Hilbert modular surfaces, the degrees of the partial Hasse polynomials give information on the cardinality of the supersingular locus, as the following result of Bachmat and Goren shows.
\begin{theorem*}[Bachmat-Goren~\cite{G1}]
Let~$D$ be a positive integer, and let~$F=\Q(\sqrt{D})$. Let~$p$ be a prime unramified in~$F$. If~$p$ is inert in~$F$, then the non-ordinary locus and the supersingular locus in~$\mathcal{M}_{F,s}$ coincide. If~$p$ is split in~$F$, the supersingular locus and the superspecial locus in~$\mathcal{M}_{F,s}$ coincide (but they differ in general from the non-ordinary locus). 
\end{theorem*}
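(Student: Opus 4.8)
The statement concerns only the $p$-divisible groups attached to geometric points of $\mathcal{M}_{F,s}$, so the plan is to translate it into semilinear algebra on the Hasse--Witt operator. Let $A/\overline{\F}_p$ be an abelian surface with real multiplication $\iota\colon\OO_F\to\End(A)$ represented by a point of $\mathcal{M}_{F,s}$, and let $\Phi$ be the absolute-Frobenius-semilinear operator induced on the two-dimensional space $H^1(A,\OO_A)$. The $\OO_F$-action makes $H^1(A,\OO_A)$ a module over $\OO_F\otimes_{\Z}\overline{\F}_p$, which is étale because $p$ is unramified; hence $H^1(A,\OO_A)=H_{\tau_1}\oplus H_{\tau_2}$ decomposes into one-dimensional eigenspaces indexed by the two $\F_p$-algebra homomorphisms $\tau_1,\tau_2\colon\OO_F/p\OO_F\to\overline{\F}_p$, the decomposition underlying the partial Hasse invariants of \cite{AG}. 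Two classical inputs will be used: for an abelian surface the $p$-rank-zero locus equals the supersingular locus, since the only symmetric dimension-two Newton polygon with no slope-zero part is $(1/2,1/2,1/2,1/2)$; and $A$ is superspecial exactly when its $a$-number is $2$, that is, when $\Phi=0$.

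The mechanism that distinguishes the two cases is that $\Phi$ is $\phi$-semilinear, so it carries $H_{\tau}$ into $H_{\phi\circ\tau}$, where $\phi$ is the $p$-power map. When $p$ is inert the two embeddings are interchanged by $\phi$, so $\Phi$ is \emph{anti-diagonal}: it sends $H_{\tau_1}$ into $H_{\tau_2}$ and $H_{\tau_2}$ into $H_{\tau_1}$, with scalar components $a$ and $b$ whose vanishing loci are the divisors $D_1$ and $D_2$ of the partial Hasse invariants. The $p$-rank is the stable rank of $\Phi$, read off from the linear operator $\Phi\circ\Phi^{(p)}$, whose matrix is $\mathrm{diag}(ba^p,\,ab^p)$. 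This is invertible precisely when $a,b\neq 0$ and vanishes the instant either $a$ or $b$ does; it is never of rank one. Hence a non-ordinary point automatically has $p$-rank zero, and the first classical input identifies it as supersingular. This yields the equality of the non-ordinary and supersingular loci in the inert case.

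When $p$ is split each embedding is fixed by $\phi$, so $\Phi$ is \emph{diagonal}, say $\Phi=\mathrm{diag}(a,d)$ on $H_{\tau_1}\oplus H_{\tau_2}$, and the $p$-rank equals the number of nonvanishing entries. All three values occur, and the $p$-rank-one stratum (exactly one of $a,d$ zero) is non-ordinary but not supersingular; this is how the supersingular locus comes to differ from the non-ordinary one. Supersingularity, i.e.\ $p$-rank zero, forces $a=d=0$, that is $\Phi=0$, which by the second classical input is exactly superspeciality; the reverse inclusion is automatic. This gives the coincidence of the supersingular and superspecial loci in the split case.

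The semilinear algebra above is short, so the real work is in its foundations. The main obstacle will be setting up the $\OO_F$-equivariant structure rigorously: producing the eigenspace decomposition together with the correct signature (each $H_{\tau_i}$ one-dimensional) and identifying the scalars $a,b,d$ with the partial Hasse invariants requires the theory of $p$-divisible groups with $\OO_F$-action and the Dieudonné/local-model analysis of Andreatta--Goren \cite{AG}. One must also invoke the dimension-two Newton polygon classification to pass from ``$p$-rank zero'' to ``supersingular'', and the $a$-number characterization of superspeciality to pass from ``$\Phi=0$'' to ``superspecial''; it is precisely here that the hypothesis $g=2$ is genuinely used, and the analogous clean trichotomy fails for general $g$.
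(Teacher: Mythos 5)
The paper does not prove this statement at all: it is quoted verbatim as an external theorem of Bachmat--Goren (\cite{G1}) and used as a black box (e.g.\ in the proof of Lemma~\ref{lem:phpol}), so there is no internal proof to compare against. Your reconstruction is, as far as I can tell, correct and is essentially the standard argument from the original reference: decompose $H^1(A,\OO_A)$ under the \'etale algebra $\OO_F\otimes\overline{\F}_p$ into two lines, observe that $\sigma$-semilinearity forces the Hasse--Witt operator to be anti-diagonal (inert) or diagonal (split), and read off the $p$-rank from the stable rank. The inert computation $\Phi^2=\mathrm{diag}(a^pb,ab^p)$ correctly rules out $p$-rank one, and the split case correctly reduces supersingularity to $\Phi=0$, i.e.\ $a$-number $2$. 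The external inputs you lean on are the right ones and are genuinely needed: the signature condition (each eigenline one-dimensional) coming from the Rapoport/Deligne--Pappas condition at an unramified $p$, Oort's identification of $p$-rank zero with supersingularity via the height-four symmetric Newton polygon classification, and Oort's characterization of superspecial surfaces by $a=g=2$. Two very minor points you could tighten: the assertion that ``all three values occur'' in the split case (needed only for the parenthetical ``differ in general'') deserves a one-line justification by exhibiting a $p$-rank-one point or citing the nonemptiness of that stratum; and the identification of the scalars $a,b,d$ with the partial Hasse invariants of \cite{AG} is decorative for this statement and could be dropped without loss.
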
 

In the computations below we make use of a formula of Möller-Zagier for the dimension of the space of twisted modular forms, which comes from a Riemann-Roch-type result. Similar computations can be made, in the genus zero case, by using the description of the space of modular forms given at the beginning of Section~\ref{sec:phi}.

\subsubsection*{Weierstrass-Teichmüller curves~$W_D$}
We consider the family of Teichmüller curves~$W_{D}$ in~$\mathcal{A}_2$. For the definition and the main properties we refer to Section~5.3 of~\cite{MZ}, from which we report here the information relevant to our discussion. For every non-square discriminant~$D$ the Hilbert modular surface~$\Po^2/\SL(\mathcal{O}_D^\vee\oplus\mathcal{O}_D)$ contains precisely one algebraically primitive Teichmüller curve~$W_D$ if~$D\not\equiv 1\mod 8$; if~$D\equiv 1\mod8$ there are two homeomorphic algebraically primitive Teichmüller curves. The curves~$W_D$ have only elliptic points of order~$2$ if~$D\ge5$, and in the case~$D=5$, treated in the next section, there is an elliptic point of order two and an elliptic point of order five. The number of elliptic points~$e_2(D)$ of~$W_D$ is known~\cite{Muk}. Finally, the Lyapunov exponents are~$\lambda_1=1$ and~$\lambda_2=1/3$ for every~$W_D$. 

According to Mukamel's classification~(see Appendix B of~\cite{Muk}), for
\begin{equation}
\label{eq:list}
D=5,8,9,12,13,16,17,20,24,25,33,36,37,40
\end{equation}
the curve~$W_D$ satisfies Assumption~\eqref{eq:ass}, more precisely, the genus of~$W_D$ is zero, and~$e_2(D)\le1$. 
In these cases the degree of the partial Hasse polynomials~$\mathrm{ph}_1(t)$ and~$\mathrm{ph}_2(t)$, i.e., the intersection number of the modulo~$p$ reduction of~$W_D$ and the components~$D_1$ and~$D_2$ of the non-ordinary locus of~$\mathcal{M}_{\Q(\sqrt{D}),s}$, can be computed with the help of Corollary~\ref{cor:degree}. From now on we concentrate on discriminants~$D>5$ belonging to the list~\eqref{eq:list} satisfying~$D\neq 1\mod 8$, for which~$W_D$ has only one component, in order to make the exposition smoother; the remaining cases can be handled similarly. 

Denote by~$\Gamma_D$ the uniformizing group of~$W_D$ and by~$\varphi$ the modular embedding. Theorem~5.5 of~\cite{MZ} states that for~$D\neq 1\mod 8$ and~~$k_1,k_2\in\Z_{\ge0}$ with~$k_1+k_2$ even it holds
\begin{equation}
\label{eq:dim}
\dim\mathrm{M}_{(k_1,k_2)}(\Gamma_D,\varphi)\-1\=-\frac{\chi(W_D)}{2}\Bigl(k_1+\frac{k_2}{3}\Bigr)-e_2(D)\Bigl\{\frac{-k_1+k_2}{4}\Bigr\}\,,
\end{equation}
where~$\{x\}\in [0,1)$ denotes the fractional part of~$x$ and~$e_2(D)$ is the number of elliptic points of~$W_D$. An analogous statement holds for~$D\equiv 1\mod 8$. 

Let now~$p\in\Z$ be a good prime for~$W_D$. By substituting the weight of the partial Hasse invariant~$h_{p,j}$ for~$(k_1,k_2)$ in~\eqref{eq:dim}, one obtains the degree of~$\mathrm{ph}_j^*(t)$, as stated in Corollary~\ref{cor:degree}. 
In order to complete the study of the non-ordinary locus of~$W_D$ modulo~$p$, if~$e_2(D)>0$, it remains to determine the reduction of the fiber over the elliptic point. 

Let~$\epsilon_j\in\{0,1\}$ be zero if~$W_D$ is torsion-free  or if the (unique) elliptic point~$e_1$ has ordinary reduction. Then it follows from Theorem~\ref{thm:phi2} that~$\epsilon=1$ only if the modular form~$Q_1$, with zero of order~$1/2$ in~$e_1$ and non-zero else, divides~$h_{p,j}(\tau)$. Since the degree of~$\mathrm{div}(h_{p,j})$ is given by the right-hand side of~\eqref{eq:dim} for~$(k_1,k_2)$ the weight of~$h_{p,j}$, it follows that 
\[
\frac{\epsilon_{j}}{2}\=e_2(D)\cdot\Bigl\{\frac{-k_1+k_2}{4}\Bigr\}\,.
\]
In particular, given the explicit description of the weight of~$h_{p,j}$ at the beginning of~\ref{sec:example} it follows that 
\begin{equation}
\label{eq:chspi}
\frac{\epsilon_{j}}{2}\=e_2(D)\Bigl\{\frac{(-1)^{j-1}(p+1)}{4}\Bigr\}\,,\quad j=1,2\,,
\end{equation}
if~$p$ is inert in~$\Q(\sqrt{D})$, and, if~$p$ is split in~$\Q(\sqrt{D})$, that
\begin{equation}
\label{eq:chsp}
\frac{\epsilon_{j}}{2}\=e_2(D)\Bigl\{\frac{(-1)^{j}(p-1)}{4}\Bigr\}\,,\quad j=1,2\,.
\end{equation}
 
By adding the value of~$\deg{\mathrm{ph}^e_{p,j}(t)}$ given by~\eqref{eq:dim} and the contribution of the elliptic point~\eqref{eq:chspi} or~\eqref{eq:chsp}, according to the splitting of~$p$ in~$\Q(\sqrt{D})$ , we obtain the degree of the partial Hasse polynomials for~$W_D$.
\begin{proposition}
\label{prop:WD}
Let~$D>5$ be one of the discriminants on the list~\eqref{eq:list}, and assume~$D\neq1\mod 8$. Let~$W_D$ be the unique primitive Teichmüller curve~$W_D\subset\Po^2/\SL(\mathcal{O}_D^\vee\oplus\mathcal{O}_D)$. 
\begin{enumerate}[wide=0pt]
\item Let~$p$ be inert in~$\Q(\sqrt{D})$. It holds
\[
\deg(\mathrm{ph}_1(t))=\begin{cases}
-\bigl(\frac{p}{3}-1\bigr)\frac{\chi(W_D)}{2}+\frac{e_2(D)}{2}&\text{if}\; p\equiv 1\mod 4\\
-\bigl(\frac{p}{3}-1\bigr)\frac{\chi(W_D)}{2}&\text{if}\; p\equiv 3\mod 4\,,
\end{cases}
\]
and
\[
\deg(\mathrm{ph}_2(t))=\begin{cases}
-\bigl(p-\frac{1}{3}\bigr)\frac{\chi(W_D)}{2}+\frac{e_2(D)}{2}&\text{if}\; p\equiv 1\mod 4\\
-\bigl(p-\frac{1}{3}\bigr)\frac{\chi(W_D)}{2}&\text{if}\; p\equiv 3\mod 4\,.
\end{cases}
\]
In particular, if~$e_2(D)=1$, the elliptic point has superspecial reduction modulo~$p$ if~$p\equiv 1\mod4$.
\item Let~$p$ be split in~$\Q(\sqrt{D})$. It holds
\[
\deg(\mathrm{ph}_1(t))=\begin{cases}
-\bigl(p-1\bigr)\frac{\chi(W_D)}{2}&\text{if}\; p\equiv 1\mod 4\\
-\bigl(p-1\bigr)\frac{\chi(W_D)}{2}+\frac{e_2(D)}{2}&\text{if}\; p\equiv 3\mod 4\,,
\end{cases}
\]
and
\[
\deg(\mathrm{ph}_2(t))=\begin{cases}
-\bigl(p-1\bigr)\frac{\chi(W_D)}{6}&\text{if}\; p\equiv 1\mod 4\\
-\bigl(p-1\bigr)\frac{\chi(W_D)}{6}+\frac{e_2(D)}{2}&\text{if}\; p\equiv 3\mod 4\,.
\end{cases}
\]
\end{enumerate}
\end{proposition}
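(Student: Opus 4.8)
The plan is to express each $\deg(\mathrm{ph}_{p,j}(t))$ as the sum of two contributions: the degree of the non-ordinary locus away from the (at most one) elliptic point, which Corollary~\ref{cor:degree} converts into a dimension of twisted modular forms, plus the contribution of the elliptic point itself. First I would read off the weight $\vec{k}_{p,j}$ of the lift $h_{p,j}$ from the splitting dictionary at the start of the section: using $\lambda_1=1$ and $\lambda_2=1/3$ for every $W_D$, the weight $p\lambda_{j'}-\lambda_j$ becomes $\vec{k}_{p,1}=(-1,p)$, $\vec{k}_{p,2}=(p,-1)$ when $p$ is inert, and $\vec{k}_{p,1}=(p-1,0)$, $\vec{k}_{p,2}=(0,p-1)$ when $p$ is split. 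Corollary~\ref{cor:degree} then identifies $\deg(\mathrm{ph}^e_{p,j}(t))$ with $\dim M_{\vec{k}_{p,j}}(\Gamma_D,\varphi)-1$.

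Next I would substitute these four weights into the Möller--Zagier formula \eqref{eq:dim}. The leading term $-\tfrac{\chi(W_D)}{2}\bigl(k_1+\tfrac{k_2}{3}\bigr)$ produces precisely the main terms $-\bigl(\tfrac{p}{3}-1\bigr)\tfrac{\chi(W_D)}{2}$ and $-\bigl(p-\tfrac{1}{3}\bigr)\tfrac{\chi(W_D)}{2}$ in the inert case, and $-(p-1)\tfrac{\chi(W_D)}{2}$, $-(p-1)\tfrac{\chi(W_D)}{6}$ in the split case. The remaining term $-e_2(D)\bigl\{\tfrac{-k_1+k_2}{4}\bigr\}$ is exactly $-\tfrac{\epsilon_j}{2}$, with $\epsilon_j$ as in \eqref{eq:chspi} and \eqref{eq:chsp}: evaluating the fractional part by reducing $-k_1+k_2$ modulo $4$ gives $\tfrac12$ precisely when $p\equiv1\bmod4$ in the inert case and when $p\equiv3\bmod4$ in the split case, and $0$ otherwise. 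This already splits each degree into the two congruence cases of the statement.

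Finally I would add the contribution of the elliptic point. Following the recipe preceding the statement, $\deg(\mathrm{ph}_{p,j}(t))$ is $\deg(\mathrm{ph}^e_{p,j}(t))$ plus the degree contributed by the fibre over the elliptic point, which lies on $D_j$ exactly when $\epsilon_j=1$ and then enters as a simple factor of degree $\epsilon_j$. Adding this to $\dim M_{\vec{k}_{p,j}}(\Gamma_D,\varphi)-1=-\tfrac{\chi(W_D)}{2}\bigl(k_1+\tfrac{k_2}{3}\bigr)-\tfrac{\epsilon_j}{2}$ turns the $-\tfrac{\epsilon_j}{2}$ into $+\tfrac{\epsilon_j}{2}=+\tfrac{e_2(D)}{2}$, which appears in exactly the congruence cases where the fractional part equals $\tfrac12$. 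For the final assertion, in the inert case $p\equiv1\bmod4$ forces $\epsilon_1=\epsilon_2=1$, so the elliptic point lies on $D_1\cap D_2$, the superspecial locus. The step I expect to require the most care is the application of \eqref{eq:dim} in the inert case, where one component of $\vec{k}_{p,j}$ is negative whereas \cite{MZ} states the formula for $k_1,k_2\ge0$; I would justify it either by twisting with an appropriate power of the nowhere-vanishing form $B_2$ of Proposition~\ref{prop:Bj} to move the weight into the admissible range (keeping track of the resulting multiplier system), or by checking directly that for these $p$ the relevant line bundle has degree large enough to kill the higher cohomology, so that the Riemann--Roch computation behind \eqref{eq:dim} applies unchanged.
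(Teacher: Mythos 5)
Your proposal is correct and follows essentially the same route as the paper: apply Corollary~\ref{cor:degree} with the weight of $h_{p,j}$ dictated by the splitting of $p$, evaluate via the M\"oller--Zagier formula \eqref{eq:dim}, identify the fractional-part term with $\epsilon_j/2$ via \eqref{eq:chspi}--\eqref{eq:chsp}, and then add back the degree-$\epsilon_j$ contribution of the elliptic point, turning $-\epsilon_j/2$ into $+\epsilon_j/2$. Your extra attention to applying \eqref{eq:dim} at the weight $(-1,p)$, which has a negative component outside the range stated in \cite{MZ}, is a point the paper passes over silently, and your fix via twisting by the nowhere-vanishing $B_2$ is consistent with how the paper manipulates weights elsewhere.
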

In particular,  Lemma~\ref{lem:phpol} and Proposition~\ref{prop:WD} imply the following estimate on the cardinality of the supersingular locus of~$W_D$ modulo a prime~$p$ inert in~$\Q(\sqrt{D})$:
\begin{align*}
-\Bigl(p-\frac{1}{3}\Bigr)\frac{\chi(W_D)}{2}+\frac{e_2(D)}{2}\;\le\;\mathrm{deg}(\mathrm{ss}_p^{W_D})\;\le\; -\frac{4(p-1)}{3}\chi(W_D)\+e_2(D)\qquad & \text{if }p\equiv 1\mod 4\\
-\Bigl(p-\frac{1}{3}\Bigr)\frac{\chi(W_D)}{2}\;\le\;\mathrm{deg}(\mathrm{ss}_p^{W_D})\;\le\; -\frac{4(p-1)}{3}\chi(W_D)\qquad & \text{if }p\equiv 3\mod 4\,.
\end{align*}
The superspecial polynomials satisfies instead
\begin{align*}
e_2(D)\;\le\;\mathrm{deg}(\mathrm{sp}_p^{W_D})\;\le\;-\bigl(p-\frac{1}{3}\bigr)\frac{\chi(W_D)}{2}+\frac{e_2(D)}{2} \qquad & \text{if }p\equiv 1\mod 4\\
0\;\le\;\mathrm{deg}(\mathrm{sp}_p^{W_D})\;\le\;(p-\frac{1}{3}\bigr)\frac{\chi(W_D)}{2} \qquad & \text{if }p\equiv 3\mod 4\,.
\end{align*}
The values of $\chi(W_{D})$ for $D$ in \eqref{eq:list} can be found in the Appendix B of \cite{Muk}.
One recovers the example~\eqref{eq:W13} for~$D=13$ in the introduction by setting the values~$\chi(W_{13})=-\tfrac{3}{2}$ and~$e_2(13)=1$.
Analogous estimates can be computed in the split case.

\subsubsection*{The triangle curve~$W_5=\Delta(2,5,\infty)$}
An important family of curves in Hilbert modular varieties are the~\emph{triangle curves}. These are of the form~$\Po/\Delta(n,m,\infty)$ where~$\Delta(n,m,\infty)$ for~$n,m\in\Z_{\ge0}\cup\{\infty\}$, is the group of symmetries of a hyperbolic triangle of angles~$\{0,\pi/n,\pi/m\}$. The curve~$\Po/\Delta(n,m,\infty)$ has genus zero, one cusp, one elliptic point of order~$n$ and one elliptic point of order~$m$ (cusps if~$n=\infty$ or~$m=\infty$). Familiar cases include the arithmetic groups~$\Delta(2,3,\infty)\simeq\SL_2(\Z)$ and~$\Delta(\infty,\infty,\infty)\simeq\Gamma(2)$, but in general the groups~$\Delta(n,m,\infty)$ are non arithmetic. 
The Picard-Fuchs equations of triangle curves are of hypergeometric type; the realization of triangle curves as Teichmüller curves, as well as their Lyapunov exponents, have been studied in~\cite{BMt}. 

In this section we consider the triangle curve~$\Po/\Delta(2,5,\infty)$, which is embedded in a Hilbert modular surface of discriminant~$D=5$. There is an exceptional isomorphism~$\Po/\Delta(2,5,\infty)\simeq W_5$, where~$W_5$ is a member of the family of curves discussed in the previous section. 

From the description of the algebraic structure of the space of twisted modular forms on~$\Delta(2,5,\infty)$ (see Section~\ref{sec:expl} of this paper or the last section of~\cite{BN}) it follows that
\begin{equation}
\label{eq:dim5}
\dim M_{(k_1,k_2)}(\Delta(2,5,\infty),\varphi)\-1=\frac{3k_1+k_2}{20}-\Bigl\{\frac{-k_1+k_2}{4}\Bigr\}-\Bigl\{\frac{-k_1+3k_2}{10}\Bigr\}\,,\quad (k_1,k_2)\in\Z^2\,.
\end{equation}
Let~$p\in\Z$ be a prime of good reduction. As in the previous example, the degree of~$\mathrm{ph}_j^*(t)$ is given by the formula above when~$(k_1,k_2)$ is the weight of~$h_{p,j}$. The non-ordinary reduction of the elliptic points is also related to the vanishing of the fractional parts of~\eqref{eq:dim5} as in the previous example, but the degree of the complete partial Hasse polynomials can not be computed directly as before. More precisely, everything works fine for the elliptic point of order~two (whose non-ordinary reduction depends only on the class of~$p$ modulo~$4$), but there a more detailed analysis is required for the elliptic point of order five. This is because the non-trivial component~$\varphi_2$ of the modular embedding has a zero of multiplicity $\tfrac{1}{5}$ at the elliptic point of order~five. We illustrate it completely in a special case, and than state the general result. 

Recall that~$p$ is inert in~$\Q(\sqrt{5})$ if and only if~$p\equiv2,3\mod5$. We assume that~$p\equiv 3\mod 5$; then~$p=3+n\cdot10$ for some integer~$n\ge0$ (otherwise~$p$ would be even). We consider the partial Hasse invariant~$h_{p,2}$ of weight~$(p,-1)$, which is the one associated with the non-trivial component of the modular embedding, and whose treatment is more delicate. One has
\[
\Bigl\{\frac{-k_1+3k_2}{10}\Bigr\}\=\Bigl\{\frac{-p-3}{10}\Bigr\}=\Bigl\{\frac{-6-10\cdot n}{10}\Bigr\}\=\frac{2}{5}\,.
\]
This means that for~$p\equiv 3\mod 5$, the elliptic point of order five always reduce to a non-ordinary point modulo~$p$, and that the partial Hasse invariant has multiplicity~$2/5$ at this point. The degree of the partial Hasse polynomial for such primes is then 
\[
\mathrm{deg}(\mathrm{ph}_2(t))\=\mathrm{deg}(\mathrm{ph}^*_2(t))\+\epsilon\+1\,,
\]
where~$\epsilon=1$ if the elliptic point of order two is non-ordinary and zero otherwise, and~$1$ counts the elliptic point of order five. By using Corollary~\ref{cor:degree} and~\eqref{eq:dim5} and the computation above one finds
\[
\mathrm{deg}(\mathrm{ph}_2(t))\=\frac{3p-1}{20}\-\frac{\epsilon}{2}\-\frac{2}{5}\+\epsilon\+1\=\frac{3p-1}{20}+\frac{\epsilon}{2}+\frac{3}{5}\,,
\]
where~$\epsilon=1$ if~$p\equiv 1\mod 4$ and zero otherwise. 
Reasoning as above in the remaining cases one proves the following result.
\begin{proposition}
\label{prop:W5}
The partial Hasse polynomials of the triangle curve~$W_5=\Po/\Delta(2,5,\infty)\subset\Po^2/(\mathcal{O}_5^\vee\oplus\mathcal{O}_5)$ have the following degrees:
\begin{enumerate}
\item if~$p$ is inert in~$\Q(\sqrt{5})$,
\[
\deg{\mathrm{ph}_1(t)}=\frac{p-3}{20}+\frac{\epsilon}{2}+\frac{5-\delta_1}{5}\,,\quad \deg{\mathrm{ph}_2(t)}=\frac{3p-1}{20}+\frac{\epsilon}{2}+\frac{5-\delta_2}{5}\,,
\]
where~$\epsilon=1$ if~$p\equiv1\mod 4$ and~$\epsilon=0$ otherwise, and~$(\delta_1,\delta_2)=(1,5)$ if~$p\equiv2\mod5$ and $(\delta_1,\delta_2)=(5,2)$ if~$p\equiv3\mod5$ ($p$ is inert in~$\Q(\sqrt{5})$ if and only if~$p\equiv2,3\mod5)$. 

\item If~$p$ is split in~$\Q(\sqrt{5})$, then~$p\equiv 1, 4\mod5$, and the same arguments show that 
\[
\deg{\mathrm{ph}_1(t)}=\frac{3(p-1)}{20}+\frac{\epsilon}{2}+\frac{5-\delta_1}{5}\,,\quad \deg{\mathrm{ph}_2(t)}=\frac{p-1}{20}+\frac{\epsilon}{2}+\frac{5-\delta_2}{5}\,,
\]
where~$\epsilon=0$ 
If~$p\equiv1\mod4$ and~$\epsilon=1$ otherwise, and~$(\delta_1,\delta_2)=(5,5)$ if~$p\equiv1\mod5$ and $(\delta_1,\delta_2)=(1,2)$ if~$p\equiv4\mod5$. 
\end{enumerate}
\end{proposition}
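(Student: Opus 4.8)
The plan is to obtain each degree by feeding the weight of $h_{p,j}$ into Corollary~\ref{cor:degree} together with the dimension formula~\eqref{eq:dim5}, and then to pass from $\mathrm{ph}^e_{p,j}(t)$, which records the non-ordinary fibers on $D_j$ lying away from the two elliptic points, to the full polynomial $\mathrm{ph}_{p,j}(t)$ by adding back one geometric point for each elliptic point whose fiber reduces to a non-ordinary abelian surface. First I would fix the splitting of $p$ in $\Q(\sqrt5)$ and read off the weight $(k_1,k_2)$ of $h_{p,j}$ from the dictionary opening Section~\ref{sec:example}: for $p$ inert these are $(-1,p)$ and $(p,-1)$, and for $p$ split they are $(p-1,0)$ and $(0,p-1)$. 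Substituting into~\eqref{eq:dim5} gives $\deg\mathrm{ph}^e_{p,j}(t)=\dim M_{(k_1,k_2)}(\Delta(2,5,\infty),\varphi)-1$, whose leading term $\tfrac{3k_1+k_2}{20}$ already produces the summands $\tfrac{p-3}{20}$, $\tfrac{3p-1}{20}$, $\tfrac{3(p-1)}{20}$, $\tfrac{p-1}{20}$ appearing in the four formulae.

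The two fractional-part terms of~\eqref{eq:dim5} carry all the information about the elliptic points. For the order-two point I would argue exactly as in~\eqref{eq:chspi} and~\eqref{eq:chsp}: its reduction is governed by $\{(-k_1+k_2)/4\}$, which equals $1/2$ or $0$ according to $p\bmod 4$, and by Theorem~\ref{thm:phi2} the point enters $\mathrm{ph}_{p,j}(t)$ precisely when it is non-ordinary, that is, when this fractional part is nonzero. Cancelling the $-1/2$ already present in $\deg\mathrm{ph}^e_{p,j}$ against the single geometric point added back leaves the net contribution $\epsilon/2$, with $\epsilon=1$ exactly in the claimed residue classes; a short check of $\{(-k_1+k_2)/4\}$ on each weight confirms the $\tfrac{\epsilon}{2}$ term, and explains the flip of the condition between the inert and split cases caused by the different weights.

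The genuinely delicate input, and the step I expect to be the main obstacle, is the order-five point in the treatment of $h_{p,2}$, because this point is simultaneously an elliptic point of order five and the unique zero of $\varphi_2'$: indeed $\deg(\mathrm{div}\,\varphi_2')=(\lambda_2-1)\chi(W_5)=\tfrac15$ is concentrated there. Consequently the exponent $\epsilon_{j,0}\in\{0,\,1+\ord_{\tau_0}\varphi_j'\}$ of~\eqref{eq:ajp} is no longer simply $0$ or $1$, and one must track how the vanishing of $\varphi_2'$ combines with the stabilizer of order five before reading off the contribution. I would detect this reduction through $\{(-k_1+3k_2)/10\}$, which takes the value $\delta_j/5$; the point is non-ordinary iff this is nonzero, in which case cancelling it against the single returned geometric point gives $1-\delta_j/5=\tfrac{5-\delta_j}{5}$, while the value $\delta_j=5$ encodes the ordinary case with vanishing contribution. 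Evaluating $\{(-k_1+3k_2)/10\}$ on the admissible odd residues ($p\equiv2,3\bmod5$ inert and $p\equiv1,4\bmod5$ split) reproduces the stated $\delta_j$ and generalizes the worked computation for $p\equiv3\bmod5$ preceding the statement. Summing the leading term, the order-two term $\epsilon/2$, and the order-five term $\tfrac{5-\delta_j}{5}$ then yields all four degree formulae.
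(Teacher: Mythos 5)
Your proposal is correct and follows essentially the same route as the paper: read off the weight of $h_{p,j}$ from the splitting of $p$, apply Corollary~\ref{cor:degree} with the dimension formula~\eqref{eq:dim5} to get the leading term $\tfrac{3k_1+k_2}{20}$, and convert each fractional-part term into the net contribution of the corresponding elliptic point ($-\tfrac12+1=\tfrac{\epsilon}{2}$ for the order-two point, $-\tfrac{\delta_j}{5}+1=\tfrac{5-\delta_j}{5}$ for the order-five point when it is non-ordinary). Your case-by-case evaluation of $\{(-k_1+k_2)/4\}$ and $\{(-k_1+3k_2)/10\}$ reproduces the paper's worked instance $p\equiv 3\bmod 5$ and correctly identifies the order-five point, where $\varphi_2'$ vanishes, as the delicate step.
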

Inequalities for the cardinality of the non-ordinary, supersingular, and superspecial locus in~$\Po/\Delta(2,5,\infty)$ can be obtained as in the previous examples.

\section{Construction of partial Hasse invariants from explicit models}
\label{sec:expl}
The goal of this section is to construct partial Hasse invariants as polynomial in modular forms, in the case where an  integral model of the curve~$Y\hookrightarrow M_F$ is given and can be parametrized by modular forms. Basically, we write the entries of the Hasse-Witt matrix in terms of modular forms. We illustrate the method in the non-arithmetic curve~$Y=\Po/\Delta(2,5,\infty)$. 

The Fuchsian group~$\Delta(2,5,\infty)$ is related to the uniformization of the curve~$W_5\subset\mathcal{M}_2$ (where~$\mathcal{M}_2$ is the moduli space of smooth genus 2 curves) parametrizing the family of genus two curves~$C_\eta$ given by
\begin{equation}
\label{eq:ext}
C_\eta\;:\;\begin{cases}
y^2&=x^5-5x^3+5x-2\eta,\quad \eta\neq\infty\\
y^2&=x^5-1,\quad \eta=\infty\,.
\end{cases}
\end{equation}
The Jacobian of~$C_\eta$ has real multiplication by~$\Q(\sqrt{5})$. More precisely, $W_5$ is the algebraically primitive Teichmüller curve in~$\mathcal{M}_2$ of discriminant~$5$, i.e., the locus of genus two curves whose Jacobian has real multiplication by~$\Q(\sqrt{5})$ and a holomorphic one form with a double zero in a Weierstrass point (this description plays no role in the following). In particular, $W_5$ can be embedded in the Hilbert modular surface~$\Po^2/\SL(\mathcal{O}_5^\vee\oplus\mathcal{O}_5)$. 

The algebraic structure of the space of twisted modular forms on~$\Delta(2,5,\infty)$ has been determined in~\cite{BN}. It is the ring of Laurent polynomials~$M_*(\Delta(2,5,\infty))[B^{\pm1}]$ over the ring of ordinary modular forms~$M_*(\Delta(2,5,\infty))$, where~$B$ is a twisted modular form of weight~$(-\tfrac{1}{3},1)$ which has no zeros on~$\overline{\Po}$.
Moreover~$M_*(\Delta(2,5,\infty))=\C[Q,R]$, where~$Q$ and~$R$ are modular forms (with respect to a non-trivial multiplier system) of weight~$\tfrac{4}{3}$ and~$\tfrac{10}{3}$ respectively. It follows that~$M_{(*,*)}(\Delta(2,5,\infty),\varphi)=\C[Q,R,B^{\pm1}]$. 

It holds~$\mathrm{div}(Q)=\tfrac{1}{5}\cdot e^{\pi i/5}$ and~$\mathrm{div}(R)=\tfrac{1}{2}\cdot i$. and then the Hauptmodul~$t=\frac{Q^5-R^2}{Q^5}$, satisfies~$t(i\infty)=0$, and~$t(i)=1$, and~$t(e^{\pi i/5})=\infty$.

\begin{theorem}
\label{thm:phi3}
Let~$Q(\tau)$ and~$R(\tau)$ be the generators of the ring of modular forms of~$\Delta(2,5,\infty)$, and let~$B(\tau)$ the twisted modular form of weight~$(-1/3,1)$ introduced above. 
Consider the power series expansion
\begin{equation}
\label{eq:gs2}
\bigl(1-5Q(\tau)x^4+5Q(\tau)^2x^8-2R(\tau)x^{10}\bigr)^{-1/2}=\sum_{n\ge0}{d_n(Q(\tau),R(\tau))x^n}\,,
\end{equation}
whose~$n$-th coefficient is a modular form of weight~$\tfrac{n}{3}$. For every prime~$p$ of good reduction, the twisted modular forms
\[
h_{p,1}(\tau)=B^p(\tau)\cdot d_{p-3}(Q(\tau),R(\tau))\quad\text{and}\quad h_{p,2}(\tau)=B(\tau)^{-1}\cdot d_{3p-1}(Q(\tau),R(\tau))
\]
if~$p$ is inert in~$\Q(\sqrt{5})$, or, if~$p$ is split in~$\Q(\sqrt{5})$,
\[
h_{p,1}(\tau)=d_{3p-3}(Q(\tau),R(\tau))\quad\text{and}\quad h_{p,2}(\tau)=B(\tau)^{p-1}\cdot d_{p-1}(Q(\tau),R(\tau))
\]
are lifts of the partial Hasse invariants for the curve~$\Po/\Delta(2,5,\infty)\hookrightarrow\mathcal{M}_{\mathbb{Q}(\sqrt{5})}(\C)$. 
\end{theorem}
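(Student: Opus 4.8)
The plan is to realize the twisted modular forms $h_{p,j}$ as entries of the Hasse--Witt matrix of the family $C_\eta$ in \eqref{eq:ext}, and then to read off from the real multiplication structure which entry computes which partial Hasse invariant. First I would record the weight bookkeeping: since $g(x)=1-5Qx^4+5Q^2x^8-2Rx^{10}$ is weight-homogeneous of weight $0$ once $x$ is assigned weight $-\tfrac13$ (each monomial pairs a coefficient of weight $\tfrac43,\tfrac83,\tfrac{10}3$ with the matching power of $x$), the coefficient $d_n$ of $g^{-1/2}$ is a modular form of weight $\tfrac n3$; substituting the weights of $B$ and of the $d_n$ then shows that the four forms in the statement have exactly the twisted weights $(-1,p),(p,-1)$ (inert) and $(p-1,0),(0,p-1)$ (split) listed at the start of Section~\ref{sec:example}. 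The factors $B^{\pm}$ thus only perform weight conversion, and, having no zeros on $\overline{\Po}$, they do not affect divisors.

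The arithmetic heart is a binomial congruence. Since $p$ is odd, $2\cdot\tfrac{p-1}2\equiv-1\pmod p$, so $\tfrac{p-1}2\equiv-\tfrac12\pmod p$ and hence $\binom{-1/2}{k}\equiv\binom{(p-1)/2}{k}\pmod p$ for every $0\le k<p$. Because $g-1$ is divisible by $x^4$, the coefficient $d_n$ involves only $\binom{-1/2}{k}$ with $k\le n/4$, and for the relevant indices $n\in\{p-3,3p-1,3p-3,p-1\}$ one has $n/4<p$; therefore $d_n\equiv[x^n]\,g(x)^{(p-1)/2}\pmod p$. Next I would identify $g$ with the reversed defining polynomial $\bar f_\eta(x)=x^5f_\eta(1/x)=1-5x^2+5x^4-2\eta x^5$ of $C_\eta$ through the substitution $x\mapsto Q^{1/2}\xi^2$ together with $\eta=R\,Q^{-5/2}$ (consistent with $t=1-R^2/Q^5$, i.e. $\eta^2=1-t$). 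This yields, up to a power of $Q$, $d_n\equiv Q^{\,n/4}\,c_{\,5(p-1)/2-n/2}\pmod p$, where $f_\eta^{(p-1)/2}=\sum_k c_k x^k$; computing the index gives the dictionary $d_{p-3}\leftrightarrow c_{2p-1}$, $d_{3p-1}\leftrightarrow c_{p-2}$, $d_{3p-3}\leftrightarrow c_{p-1}$, $d_{p-1}\leftrightarrow c_{2p-2}$. These four coefficients are precisely the entries $c_{ip-j}$ of the genus-two Hasse--Witt matrix in the basis $\{dx/y,\,x\,dx/y\}$.

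It then remains to match each Hasse--Witt entry to a component of the non-ordinary locus via the real multiplication. I would use the order-five symmetry of $C_\eta$---visible from $f_\eta(x)=2\cos 5\theta-2\eta$ under $x=2\cos\theta$---to produce the action of $\OO_F$ on $H^0(\Omega^1)=\langle dx/y,\,x\,dx/y\rangle$ and to check that these two differentials span the eigenlines $L_{\sigma_1},L_{\sigma_2}$ of the two real embeddings. For $p$ split the semilinear Frobenius preserves each eigenline, the Hasse--Witt matrix is diagonal, and its diagonal entries $c_{p-1},c_{2p-2}$ are the partial Hasse invariants $h_1,h_2$ of weight $(p-1,0),(0,p-1)$; for $p$ inert Frobenius interchanges $L_{\sigma_1}$ and $L_{\sigma_2}$, so the relevant entries are the anti-diagonal ones $c_{2p-1},c_{p-2}$, which carry the crossed weights $(-1,p),(p,-1)$---exactly the weights produced by the $B^{\pm}$ normalization. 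Since the divisor of the characteristic-$p$ partial Hasse invariant $h_j$ is the component $D_j$, and the reduction of $d_n$ equals this entry up to a power of $Q$ (whose only zero, at the order-five elliptic point, lies at $t=\infty$) and the nonvanishing factor $B$, the reduction of the zero locus of $h_{p,j}$ in the finite $t$-plane is $\mathcal{X}_{\bar p}\cap D_j$, i.e. $\mathrm{ph}_{p,j}(t)$; this is Property~1 of Theorem~\ref{thm:phi2}.

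For the constant-$t$-expansion statement (Property~2) I would compare with Theorem~\ref{thm:phi2}: the forms constructed here and there have the same weight and multiplier system and, by the identification above, the same reduction modulo $p$ as $t$-expansions at $i\infty$, so constancy is inherited from the congruence \eqref{eq:cong}; alternatively one argues directly that at the cusp $\eta\to1$ the polynomial $g$ degenerates to a perfect power, forcing the leading Hasse--Witt coefficient to reduce to a constant and the higher ones to vanish modulo $p$. The hard part is the real multiplication bookkeeping of the previous paragraph: verifying that $\{dx/y,\,x\,dx/y\}$ is genuinely the $\OO_F$-eigenbasis, and pinning down how Frobenius permutes the two graded pieces of the Dieudonné module in the inert case---this is what assigns each coefficient $c_{ip-j}$ to $D_1$ versus $D_2$, and it is the step on which the whole identification turns.
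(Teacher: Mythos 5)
Your proposal is correct and follows essentially the same route as the paper: both identify the $d_n$ with entries of Manin's Hasse--Witt matrix for the family \eqref{eq:ext} via the substitution $x\mapsto Q^{1/2}x^2$ (your index dictionary $d_{p-3}\leftrightarrow c_{2p-1}$, $d_{3p-1}\leftrightarrow c_{p-2}$, $d_{3p-3}\leftrightarrow c_{p-1}$, $d_{p-1}\leftrightarrow c_{2p-2}$ matches the paper's exactly), use the real multiplication to assign the surviving entries to the components $D_1,D_2$, and use powers of $B$ purely for weight correction. You are somewhat more explicit than the paper on two points --- the binomial congruence justifying $d_n\equiv[x^n]g^{(p-1)/2}\pmod p$, and the constant-$t$-expansion property --- while for the eigenline/Frobenius bookkeeping that you rightly flag as the crux, the paper simply defers to Manin and to~\cite{B} rather than verifying it via the order-five symmetry as you propose.
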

\begin{proof}
In the proof we will use letters~$Q,R$ to denote parameters, and~$Q(\tau)$ and~$R(\tau)$ to denote modular forms as in the statement. 
We first rewrite the equation of~$C_\eta$ in~\eqref{eq:ext} as
\begin{equation}
\label{eq:exQR}
y^2\=x^5\-5Qx^3\+5Q^2x\-2R\,
\end{equation}
via the transformation~$\eta=RQ^{-5/2}$ and~$(x,y)\mapsto(xQ^{-1/2},yQ^{5/2})$. This is motivated by the following considerations: the parameter~$\eta$ can be regarded as a modular function on an index-two subgroup of~$\Delta(2,5,\infty)$, and it holds~$\eta(\tau)^2=1-t(\tau)=R(\tau)^2Q(\tau)^{-5}$. The appearance of~$\eta^2$ is natural because~$C_{\eta}$ and~$C_{-\eta}$ are isomorphic, while the shift of~$1$ depends on our normalization of~$t$. 

Fix~$p>5$ a prime, and consider~$Q$ and~$R$ in~\eqref{eq:exQR} as parameters varying over an extension of~$\F_p$. The power series expansion
\begin{equation}
\label{eq:gs1}
(x^5\-5Qx^3\+5Q^2x\-2R)^{(p-1)/2}=\sum_{l=0}^N{c_l(Q,R)x^l}\;\in\;\overline{\F}_p[x]\,.
\end{equation}
has coefficients~$c_l(Q,R)$ that are are polynomials in~$Q,R$. 
It is known since the work of Manin~\cite{Manin} that the Hasse-Witt matrix of the family~\eqref{eq:exQR} is given by
\begin{equation}
\label{eq:HW}
M(Q,R)\:=\begin{pmatrix}
c_{p-1}(Q,R)& c_{p-2}(Q,R)\\
c_{2p-1}(Q,R)& c_{2p-2}(Q,R)
\end{pmatrix}\,.
\end{equation}
Because of the action of real multiplication, it follows that~$c_{p-1}(Q,R)=c_{2(p-1)}(Q,R)=0$ if~$p$ is inert in~$\Q(\sqrt{5})$, and that~$c_{p-2}(Q,R)=c_{2p-1}(Q,R)=0$ if~$p$ is split in~$\Q(\sqrt{5})$ (see~\cite{B} for a general result). It follows that for~$j=1,2$ the~$j$-th  component of the non-ordinary locus of the modulo~$p$ reduction of the family~\eqref{eq:ext} is the zero locus of~$c_{j(p-1)}(Q,R)$ if~$p$ is split, and is the zero locus of~$c_{(j+1)p-j}(Q,R)$, where~$j+1=1$ if~$j=2$, if~$p$ is inert. 

The polynomial~$(1-5Qx^4+5Qx^8-2Rx^{10})$, which for~$Q=Q(\tau)$ and~$R=R(\tau)$ is the one in~\eqref{eq:gs2}, is obtained from the one in~\eqref{eq:gs1} by substituting~$x\to x^{-2}$ and multiplying by~$x^{10\frac{(p-1)}{2}}$. This implies in particular the following relations between expansion coefficients:
\begin{align*}
&c_{p-1}(Q,R)\equiv d_{3p-3}(Q,R) \mod p\,,\quad &c_{p-2}(Q,R)\equiv d_{3p-1}(Q,R) \mod p\,,\\
&c_{2p-1}(Q,R)\equiv d_{p-3}(Q,R) \mod p\,,\quad &c_{2p-2}(Q,R)\equiv d_{p-1}(Q,R) \mod p\,.
\end{align*}
It finally follows that, in the inert case, the zero locus of the modular forms~$d_{p-3}(Q(\tau),R(\tau))$ and of~$d_{3p-1}(Q(\tau),R(\tau))$ modulo~$p$ is the zero locus of the partial Hasse invariants~$h_{p,1}$ and~$h_{p,2}$ on~$\Po/\Delta(2,5,\infty)$ respectively. They are not lifts of partial Hasse invariant though, since their weights are~$(\tfrac{p}{3}-1,0)$ and~$(p-\tfrac{1}{3},0)$. Nevertheless, multiplication by suitable powers of~$B(\tau)$, which does not modify the zero locus, yields twisted modular forms of correct weight. In particular~$B(\tau)^pd_{p-3}(Q(\tau),R(\tau))$ and~$B(\tau)^{-1}d_{3p-1}(Q(\tau),R(\tau))$ are lifts of partial Hasse invariants of weight~$(-1,p)$ and~$(p,-1)$ respectively. 
Similarly, $d_{3p-3}(Q(\tau),R(\tau))$ and~$B(\tau)^{p-1}d_{p-1}(Q(\tau),R(\tau))$ are lifts of the partial Hasse invariants of weight~$(p-1,0)$ and~$(0,p-1)$ respectively in the split case. 
\end{proof}

We conclude with an example of the~$t$-expansion at~$i\infty$ of the partial Hasse invariants of Theorem~\ref{thm:phi3}, starting with the~$t$-expansion of the building blocks~$Q(\tau)$ and~$R(\tau)$ and~$B(\tau)$.
These are related to hypergeometric functions
\[
{}_2F_1(a,b;1;t)\=\sum_{n\ge0}\frac{(a)_n(b)_n}{(n!)^2}t^n\,, 
\]
where~$(x)_n=x(x+1)\cdots(x+n-1)$ is the Pochhammer symbol, which are the integral solutions of the Picard-Fuchs differential equations associated to the curve~$\Po/\Delta(2,5,\infty)$. More precisely, it follows from Corollary~3.3 of~\cite{BN} (with~$n=2, m=3$, and~$j=1$ and~$k_1=r_1=1)$ that
\begin{equation}
\label{eq:exQ}
Q(\tau)^3\={}_2F_1\biggl(\frac{7}{20},\frac{3}{20};1;t\biggr)^{4}\=1 \+ \frac{21}{100}t \+ \frac{15687}{160000}t^2 \+ \frac{236691}{4000000}t^3\+\cdots\,.
\end{equation}
Next, the relation~$t=(Q^5-R^2)/Q^5$ implies that~$R(\tau)^2\=(1-t)Q(\tau)^5$ and that the~$t$-expansion of~$R$ can be computed from the~$t$-expansion of~$Q$. 

Finally, let~$\varphi=(1,\varphi_2(\tau))$ denote the modular embedding for the group~$\Delta(2,5,\infty)$. 
Corollary~3.4 of~\cite{BN}, with~$n=2,m=3$ and~$j=2$ and~$k_2=2, r_1=1$, implies that
\begin{equation}
\label{eq:exP}
\varphi_2'(\tau)\=\frac{{}_2F_1\Bigl(\frac{7}{20},\frac{3}{20};1;t\Bigr)^2}{{}_2F_1\Bigl(\frac{9}{20},\frac{1}{20};1;t\Bigr)^2}\=1 + \frac{3}{50}t + \frac{927}{40000}t^2 + \frac{20757}{1600000}t^3\cdots\,.
\end{equation}
The function~$\varphi_2'(\tau)$ has a zero of order~$\tfrac{1}{5}$ at the class of the elliptic point~$e^\frac{\pi i}{5}\in\Po$, precisely like~$Q$. It follows that the twisted modular form~$B=B_2$ defined in Proposition~\ref{prop:Bj} satisfies~$B^2=\frac{Q}{\varphi_2'}$, and its~$t$-expansion 
\begin{equation}
\label{eq:exB}
B(\tau)\=1 + \frac{1}{100}t + \frac{641}{160000}t^2 + \frac{54607}{24000000}t^3+\cdots\,.
\end{equation}
can be computed from~\eqref{eq:exQ} and~\eqref{eq:exP}. 

Let~$p=11$; it is split in~$\Q(\sqrt{5})$. With the notation of Theorem~\ref{thm:phi3} the lifts of the partial Hasse invariants for~$\Delta(2,5,\infty)$ are
\begin{equation}
\label{eq:ex11}
\begin{aligned}
h_{11,1}(\tau)&\=d_{30}(Q(\tau),R(\tau))=\frac{800625}{256}Q(\tau)^5R(\tau)+\frac{5}{2}R(\tau)^3\,,\\
\quad h_{11,2}(\tau)&\=B(\tau)^{10}d_{10}(Q(\tau),R(\tau))\=B(\tau)^{10}R(\tau)\,.
\end{aligned}
\end{equation}
We can easily compute the~$t$-expansion of~$h_{11,1}(\tau)$ is closed form if we consider its square. By exploiting the relations~\eqref{eq:exQ} and~$R^2=(1-t)Q^5$ one finds
\[
\begin{aligned}
h_{11,1}(\tau)^2&\=\Bigl(\frac{800625}{256}\Bigr)^2Q(\tau)^{10}R(\tau)^2+\frac{5\cdot800625}{256}Q(\tau)^5R(\tau)^4+\frac{25}{4}R(\tau)^6\\
&\=Q(\tau)^{15}\biggl[\Bigl(\frac{800625}{256}\Bigr)^2\frac{R(\tau)^2}{Q(\tau)^5}+\frac{5\cdot800625}{256}\frac{R(\tau)^4}{Q(\tau)^{10}}+\frac{25}{4}\frac{R(\tau)^6}{Q(\tau)^{15}}\biggr]\\
&\={}_2F_1\biggl(\frac{7}{20},\frac{3}{20};1;t\biggr)^{20}\biggl[\Bigl(\frac{800625}{256}\Bigr)^2(1-t)+\frac{5\cdot800625}{256}(1-t)^2+\frac{25}{4}(1-t)^3\biggr]\\
&\={}_2F_1\biggl(\frac{7}{20},\frac{3}{20};1;t\biggr)^{20}\biggl[\frac{642025600225}{65536}-\frac{643051219425}{65536}t+\frac{4007925}{256}t^2-\frac{25}{4}t^3\biggr]\\
&\= \frac{642025600225}{65536}\+ \frac{124302643245}{262144}t\- \frac{97678687599009}{83886080}t^2 \+\cdots\\
&\equiv {}_2F_1\biggl(\frac{7}{20},\frac{3}{20};1;t\biggr)^{20}(1+5t^11+3t^2+2t^3)\equiv 1\mod 11\,.
\end{aligned}
\]
For the lift of the second partial Hasse invariant, by using the relations~$R^2=(1-t)Q^5$ and~$B^2=Q(\varphi_2')^{-1}$ and the hypergeometric relations~\eqref{eq:exQ} and~\eqref{eq:exP}, one proves that
\[
\begin{aligned}
h_{11,2}(\tau)^2&\=R(\tau)^2B(\tau)^{20}\=(1-t)Q(\tau)^5\frac{Q(\tau)^{10}}{(\varphi_2')^{10}}\\
&\=(1-t){}_2F_1\biggl(\frac{7}{20},\frac{3}{20};1;t\biggr)^{20}\cdot\frac{{}_2F_1\biggl(\frac{9}{20},\frac{1}{20};1;t\biggr)^{20}}{{}_2F_1\biggl(\frac{7}{20},\frac{3}{20};1;t\biggr)^{20}}\=(1-t)\cdot{}_2F_1\biggl(\frac{9}{20},\frac{1}{20};1;t\biggr)^{20}\\
&\=1 \-\frac{11}{20}t \-\frac{ 5841}{32000}t^2 \-\frac{ 68541}{800000}t^3\-\frac{3937395297}{81920000000}t^4\+\cdots\equiv 1\mod 11\,.
\end{aligned}
\]

For the inert prime~$p=13$ similar computations prove that
\[
\begin{aligned}
h_{13,1}(\tau)^2&\=\bigl(B(\tau)^{13}d_{10}(Q(\tau),R(\tau))\bigr)^2\=B(\tau)^{26}R(\tau)^2\=(1-t)\cdot\frac{{}_2F_1\Bigl(\frac{9}{20},\frac{1}{20};1;t\Bigr)^{26}}{{}_2F_1\Bigl(\frac{7}{20},\frac{3}{20};1;t\Bigr)^2}\,,\\
h_{13,2}(\tau)^2&=\bigl(B(\tau)^{-1}d_{38}(Q(\tau),R(\tau))\bigr)^2
=p(t)\cdot \frac{{}_2F_1\Bigl(\frac{7}{20},\frac{3}{20};1;t\Bigr)^{26}}{{}_2F_1\Bigl(\frac{9}{20},\frac{1}{20};1;t\Bigr)^2}\,,
\end{aligned}
\]
where
\[
p(t)=\Bigl(\frac{100321875}{2048}\Bigr)^2(1-t)+\frac{719809453125}{32768}(1-t)^2+\Bigl(\frac{7175}{16}\Bigr)^2(1-t)^3\,.
\]

The appearance of only one hypergeometric series for each partial Hasse invariant in the split~$p=11$ case, and the appearance of two hypergeometric series in the inert~$p=13$ case, can be explained in terms of weights. In the split case the partial Hasse invariant has weight concentrated in one component, i.e., $(p-1,0)$ and~$(0,p-1)$, while in the inert case it is mixed~$(-1,p)$ and~$(p,-1)$. The hypergometric series~${}_2F_1\bigl(\frac{7}{20},\frac{3}{20};1;t\bigr)$ , via the modularity of the Picard-Fuchs differential equation (see Section~1 of~\cite{B}), is related to modular forms of weight~$(*,0)$; the hypergeometric series~${}_2F_1\bigl(\frac{9}{20},\frac{1}{20};1;t\bigr)$ to twisted modular forms of weight~$(0,*)$. This phenomenon is already visible in the formula~\eqref{eq:exP} for~$\phi_2'$, which is a twisted modular form of weight~$(2,-2)$.

\bibliography{Hasse}{}
\bibliographystyle{plain}
\end{document}